\numberwithin{equation}{section}
\theoremstyle{plain}
\newtheorem*{theorem*}{Theorem}
\newtheorem{prop}{Proposition}
\newtheorem{lemma}{Lemma}
\newtheorem{fact}{Fact}
\newcommand{\thistheoremname}{}
\newtheorem{genericthm}{\thistheoremname}
\newtheorem*{genericthm*}{\thistheoremname}
\newenvironment{namedtheorem*}[1]{\renewcommand{\thistheoremname}{#1}\begin{genericthm*}}{\end{genericthm*}}
\newtheorem{maintheorem}{Theorem}
\crefname{maintheorem}{Theorem}{Theorems}
\theoremstyle{definition}
\newtheorem{example}{Example}
\Crefname{prop}{Proposition}{Propositions}
\DeclareMathOperator{\ad}{ad}
\DeclareMathOperator{\CT}{CT}
\newcommand{\hyper}[3][F]{{}_{#2}#1_{#3}}
\let\L\relax\DeclareMathOperator{\L}{\mathcal{L}}
\DeclareMathOperator{\R}{\mathcal{R}}
\DeclareMathOperator{\M}{\mathcal{M}}
\DeclareMathOperator{\N}{\mathcal{N}}
\def\geq{\geqslant}
\def\leq{\leqslant}
\def\({\left(}
\def\){\right)}
\DeclareMathOperator{\cover}{:\!\supset}
\DeclareMathOperator{\coveredby}{\subset\!:}
\newcommand{\poch}[2]{\(#1\)_{{#2}}}
\relax\newcommand{\=}{\mathrel{\phantom{=}}}
\newcommand\Z{\mathbb{Z}}
\newcommand\Q{\mathbb{Q}}
\newcommand*{\SetSuchThat}[1][]{} 
\newcommand*{\MvertSets}{%
	\renewcommand*\SetSuchThat[1][]{%
		\mathclose{}%
		\nonscript\;##1\vert\penalty\relpenalty\nonscript\;%
		\mathopen{}%
	}%
}
\DeclarePairedDelimiterX \Set [2] {\lbrace}{\rbrace}
{\,#1\SetSuchThat[\delimsize]#2\,}
\newcommand{\mydef}[1]{\textbf{#1}}
\title[A Characterization of Jack Hypergeometric Series via Differential Equations]{A Characterization of Macdonald's Jack Hypergeometric Series ${}_pF_q(x;\alpha)$ and ${}_pF_q(x,y;\alpha)$ via Differential Equations}
\author{Hong Chen}
\address[H.~Chen]{Department of Mathematics, Rutgers University, 110 Frelinghuysen Rd, Piscataway, NJ 08854, US}
\email{hc813@math.rutgers.edu}
\author{Siddhartha Sahi}
\address[S.~Sahi]{Department of Mathematics, Rutgers University, 110 Frelinghuysen Rd, Piscataway, NJ 08854, US}
\email{sahi@math.rutgers.edu}
\date{\today}
\subjclass[2020]{%
Primary: 33C67; 
Secondary: 05E05, 
35C10. 
}
\keywords{Jack polynomials, generalized hypergeometric series, differential equations}
\begin{document}
\begin{abstract}
	In a widely circulated manuscript from the 1980s, now available on the arXiv, 
	I.~G.~Macdonald introduced certain multivariable hypergeometric series ${}_pF_q(x)= {}_pF_q(x;\alpha)$ and ${}_pF_q(x,y)= {}_pF_q(x,y;\alpha)$ in one and two sets of variables $x=(x_1,\dots x_n)$ and $y=(y_1,\dots y_n)$. These two series are defined by explicit expansions in terms of Jack polynomials $J^{(\alpha)}_\lambda$, and for $\alpha=2$ they specialize to the hypergeometric series of matrix arguments studied by Herz (1955) and Constantine (1963) that admit analogous expansions in terms of zonal polynomials.
	
	In this paper we determine explicit partial differential equations that characterize ${}_pF_q$, thereby answering a question posed by Macdonald. More precisely, for each $n,p,q$ we construct three differential operators $\mathcal A=\mathcal A^{(x,y)}$, $\mathcal B=\mathcal B^{(x)}$, $\mathcal C=C^{(x)}$, and we show that ${}_pF_q(x,y)$ and ${}_pF_q(x)$ are the unique series solutions of the equations $\mathcal A(f)=0$ and $\mathcal C(f)=0$, respectively, subject to certain symmetry and boundary conditions. We also prove that the equation $\mathcal B(f)=0$ characterizes ${}_pF_q(x)$, but only after one restricts the domain of $\mathcal B$ to the set of series satisfying an additional stability condition with respect to $n$. 
	
	Special cases of the operators $\mathcal A$ and $\mathcal B$ have been constructed previously in the literature, but only for a small number of pairs $(p,q)$, namely for $p \leq 3$ and $q \leq 2$ in the zonal case  by Muirhead (1970), Constantine--Muirhead (1972), and  Fujikoshi (1975); and for $p \leq 2$ and $q \leq 1$ in the general Jack case by Macdonald (1980s), Yan (1992), Kaneko (1993), and Baker--Forrester (1997). However the operator $\mathcal C$ seems to be new even for these special cases.
\end{abstract}
\maketitle
\tableofcontents
\section{Introduction}
Hypergeometric functions are a cornerstone of special functions in mathematics and physics, providing solutions to a wide array of differential equations and unifying many classical functions within a generalized framework.
Their study connects pure mathematics, applied sciences, and engineering, offering tools to model phenomena ranging from quantum mechanics to statistical and combinatorial problems.
  
\subsection{The univariate theory}
	The study of hypergeometric functions originated with Euler in 1769.
	He investigated the following second-order linear ordinary differential equation (ODE),
	\begin{align}\label{eqn:euler}
		z(1-z)\dod[2]{F(z)}{z} +(c-(a+b+1)z)\dod{F(z)}{z} -abF(z)=0,	
	\end{align}
	gave a power series solution, and established an integral representation of this series.
	
	The solution of \cref{eqn:euler} was later studied extensively by Gauss in 1812 and is now known as the \mydef{Gauss hypergeometric function} $\hyper{2}{1}(a,b;c;z)$.
	It is the unique solution of \cref{eqn:euler} that is analytic at $z=0$ and satisfies $F(0)=1$.
	
	The Gauss hypergeometric function is
	\begin{align}\label{eqn:2F1}
		\hyper{2}{1}(a,b;c;z)
		=\sum_{k=0}^\infty \frac{\poch{a}{k}\poch{b}{k}}{\poch{c}{k}} \frac{z^k}{k!},
	\end{align}
	where $\poch{a}{k}=a(a+1)\cdots(a+k-1)$ is the \mydef{Pochhammer symbol}.
	The series converges for $|z|<1$ provided that $c\neq0,-1,-2,\dots$.
	It includes, as special or limiting cases, the power, logarithm, and exponential functions, as well as orthogonal polynomials such as the Jacobi polynomials.
	
	\cref{eqn:euler} was subsequently studied by Kummer (1836), most notably, by Riemann (1857), and by Schwarz (1873) and Klein (1894).
	Let $p(z)$ and $q(z)$ be meromorphic functions. 
	We say that the second-order linear ODE
	\begin{align}\label{eqn:ode}
		\dod[2]{F(z)}{z} +p(z)\dod{F(z)}{z}+q(z)F(z)=0,	
	\end{align}	
	has a regular singularity at $z=z_0$ if $z_0$ is a pole of order 1 for $p(z)$ and a pole of order at most 2 for $q(z)$.
	Riemann showed that any equation of the form \cref{eqn:ode} with three regular singularities can be transformed into Euler's hypergeometric differential equation \cref{eqn:euler}, with singularities at $0$, $1$, and $\infty$.
	
	More generally, a series $\displaystyle\sum_{k=0}^\infty c_k$ is said to be \mydef{hypergeometric} if the ratio $c_{k+1}/c_k$ is a rational function of $k$.
	Many elementary and non-elementary functions in mathematics and physics are hypergeometric functions.
	It is customary to let $c_0=1$ and write
	\begin{align*}
		\frac{c_{k+1}}{c_k}=\frac{(k+a_1)\cdots(k+a_p)}{(k+b_1)\cdots(k+b_q)}\frac{z}{k+1},
	\end{align*}
	which leads to
	\begin{align}\label{eqn:pFq}
		\hyper{p}{q}(\underline{a};\underline{b};z) = \sum_{k=0}^\infty \frac{\poch{a_1}{k}\cdots \poch{a_p}{k}}{\poch{b_1}{k}\cdots \poch{b_q}{k}} \frac{z^k}{k!},
	\end{align}
	where $\underline{a}=(a_1,\dots,a_p)$ and $\underline{b}=(b_1,\dots,b_q)$.
	Similar to $\hyper{2}{1}$, the hypergeometric function $\hyper{p}{q}$ is the unique solution of the differential equation
	\begin{align}\label{eqn:pfq-de}
		\(z\dod{}{z}\prod_{k=1}^q \(z\dod{}{z}+b_k-1\)
		-z \prod_{k=1}^p \(z\dod{}{z}+a_k\)\)(F(z))=0.
	\end{align}
	For details, see \cite{Erd81}, the Bateman Manuscript Project.
	
	Hypergeometric differential equations are canonical examples in the Fuchsian theory, which studies linear ODEs with singularities using the Frobenius series method.
	A central problem in Fuchsian theory is to understand the monodromy of solutions, i.e., how solutions transform under analytic continuation along loops encircling singularities.
	The monodromy of the hypergeometric differential equation for $\hyper{n}{n-1}$ has been studied in detail in \cite{BH89}.
	
	Besides differential equations, central topics in the theory of hypergeometric functions include integral representations (Euler, Barnes, etc.), summation formulas (Gauss, Pfaff--Saalschütz, Dougall, etc.), asymptotics, and transformations (Euler, Pfaff, Kummer, etc.).
	We refer to \cite{AAR99} for a comprehensive survey of special functions, especially hypergeometric functions.
	
	Basic hypergeometric series (i.e., $q$-analog) are discussed briefly in \cref{sec:basic}.
	
\subsection{The multivariate theory}
	The discussion above concerns hypergeometric functions in a single complex variable~$z$.
	Since Herz \cite{Herz} introduced hypergeometric functions of matrix arguments, many other functions have been extended to the symmetric cone of real symmetric positive-definite matrices.
	These functions are foundational in multivariate statistics (e.g., Wishart and Hotelling distributions, \cite{Muirhead82}), random matrix theory (eigenvalue distributions and Tracy--Widom laws, \cite{Johnstone}), representation theory on symmetric cones (spherical functions and Dunkl operators, \cite{Dunkl,Rosler03}), and computational mathematics \cite{KE06}.
	
\subsubsection{Zonal polynomials and hypergeometric functions}
	Herz \cite{Herz} used recursive applications of the Laplace transform and its inverse to define hypergeometric functions $\hyper{p}{q}$ with matrix arguments. 
	Later, Constantine \cite{Con63} gave a series expansion for them in terms of \mydef{zonal polynomials}, which were introduced by James and Constantine in the 1960s.
	
	Zonal polynomials take as input an $n\times n$ symmetric positive-definite matrix $X$. Equivalently, they can be viewed as symmetric polynomials in the eigenvalues $(x_1,\dots,x_n)$ of $X$.
	Like other families of symmetric polynomials, $n$-variate zonal polynomials are indexed by \mydef{partitions} of length at most~$n$.
	Such a partition is a tuple $\lambda=(\lambda_1,\dots,\lambda_n)\in\Z^n$ with $\lambda_1\geq\cdots\geq\lambda_n\geq0$.
	
	Constantine \cite{Con63} introduced the following hypergeometric functions associated with zonal polynomials, or simply \mydef{zonal hypergeometric functions}, with one and two matrix arguments $x=(x_1,\dots,x_n)$ and $y=(y_1,\dots,y_n)$
	\begin{align}
		\hyper{p}{q}(\underline{a};\underline{b};x)
		&=	\sum_{\lambda} \frac{\poch{\underline{a}}{\lambda}}{\poch{\underline{b}}{\lambda}} \frac{C_\lambda(x)}{|\lambda|!},	\label{eqn:pFq-Constantine-1}
		\\
		\hyper{p}{q}(\underline{a};\underline{b};x,y)
		&=	\sum_{\lambda} \frac{\poch{\underline{a}}{\lambda}}{\poch{\underline{b}}{\lambda}} \frac{C_\lambda(x)C_\lambda(y)}{|\lambda|!C_\lambda(\bm1_n)},	
		\label{eqn:pFq-Constantine-2}
	\end{align}
	where $\poch{\cdot}{\lambda}$ is a generalization of the usual Pochhammer symbol $\poch{\cdot}{k}$, $C_\lambda$ denotes the zonal polynomial (in the $C$-form), $|\lambda|=\lambda_1+\dots+\lambda_n$ is the size of $\lambda$, and $\bm1_n=(1,\dots,1)$ ($n$ times).
	The full definitions are given in \cref{sec:pre}.
	
	The properties of zonal polynomials and zonal hypergeometric functions, and their applications in statistics, together with historical accounts, are collected in \cite{Muirhead82}; see also the discussion below.
	Beyond statistics, zonal polynomials also appear in representation theory, where they serve as spherical functions of the Gelfand pair $(GL_n(\mathbb R),O_n)$ \cite[Ch.~VII]{Mac15}.
	
\subsubsection{Jack polynomials and hypergeometric series}
	Around 1970, Jack \cite{Jack} defined a family of symmetric polynomials, now known as \mydef{Jack polynomials} $J_\lambda(x;\alpha)$. 
	These depend on a parameter $\alpha$ and reduce to Schur polynomials and zonal polynomials when $\alpha=1$ and $\alpha=2$, respectively. They are eigenfunctions for a second-order operator $\square$ known as the \mydef{Laplace--Beltrami operator}. 
	More generally, there are simultaneous eigenfunctions of a commuting family of differential operators $D_1,D_2,\dots,D_n$ known as the \mydef{Debiard--Sekiguchi operators}.
	See \cite{KS06} for the historical background on Jack polynomials and related symmetric functions such as the Hall--Littlewood and Macdonald polynomials.

	In a widely circulated manuscript from the late 1980s, now available as \cite{Mac-HG}, Macdonald introduced further generalizations of hypergeometric series in the alphabets $x=(x_1,\dots,x_n)$ and $y=(y_1,\dots,y_n)$:
	\begin{align}
		\hyper{p}{q}(\underline{a};\underline{b};x;\alpha)	
		&=	\sum_\lambda \frac{\poch{\underline{a};\alpha}{\lambda}}{\poch{\underline{b};\alpha}{\lambda}} \alpha^{|\lambda|} \frac{J_\lambda(x;\alpha)}{j_\lambda},	\label{eqn:pFq-Mac-1}\\ 
		\hyper{p}{q}(\underline{a};\underline{b};x,y;\alpha)	
		&=	\sum_\lambda 
		\frac{\poch{\underline{a};\alpha}{\lambda}}{\poch{\underline{b};\alpha}{\lambda}} \alpha^{|\lambda|} \frac{J_\lambda(x;\alpha)J_\lambda(y;\alpha)}{j_\lambda J_\lambda(\bm1_n;\alpha)},\label{eqn:pFq-Mac-2}
 	\end{align}
	where $\poch{\cdot;\alpha}{\lambda}$ and $j_\lambda$ are defined in \cref{sec:pre}.
	Macdonald implicitly treated \cref{eqn:pFq-Mac-1,eqn:pFq-Mac-2} as formal power series, and we will adopt this convention.
	We call these the hypergeometric series associated with Jack polynomials, or simply \mydef{Jack hypergeometric series}, in one or two alphabets.
	
	It should be noted that Kor\'anyi \cite{Koranyi} independently introduced the one-alphabet case \cref{eqn:pFq-Mac-1}, and Yan \cite{Yan92} the two-alphabet case \cref{eqn:pFq-Mac-2}. As generalizations of the zonal case, when $\alpha=2$, \cref{eqn:pFq-Mac-1,eqn:pFq-Mac-2} specialize to \cref{eqn:pFq-Constantine-1,eqn:pFq-Constantine-2}, respectively.

	Many classical properties of univariate hypergeometric functions have been extended to the multivariate setting---for example, Laplace transforms \cite{Ros20,BR23}, integral representations and Selberg integrals \cite{Yan92,FW08}. 
	In addition, multivariate Hermite, Laguerre, and Jacobi polynomials (which correspond to the special cases $\hyper{2}{0}$, $\hyper{1}{1}$, and $\hyper{2}{1}$, respectively) have been studied \cite{Mac-HG,Lassalle-Hermite,Lassalle-Laguerre,Lassalle-Jacobi,BF97,Ros98}.

\subsubsection{Hypergeometric functions associated with root systems}
	Hypergeometric functions associated with root systems generalize the Gauss hypergeometric function $\hyper{2}{1}(z)$ to multivariate settings in a different direction, governed by finite reflection groups. 
	The foundational theory was developed by Heckman--Opdam in the late 1980s \cite{HO87}.
	In 1993, Beerends--Opdam \cite{BO93}, building on the differential equations in \cite{Yan92}, proved that Macdonald's $\hyper{2}{1}(x)$ is a special case of the hypergeometric function associated with the root system $BC_n$.
	See also \cite[Ch.~8]{AskeyBateman}, the Askey--Bateman project.
	
\subsubsection{Applications of multivariate hypergeometric series}
	With the notation in place, we briefly discuss applications in statistics.
	The classical real Wishart ensembles are probability distributions over symmetric positive-definite matrices.
	They play a central role in likelihood-ratio tests \cite{Muirhead82}, multidimensional Bayesian analysis \cite{KK10}, and random matrix theory \cite{James60}.
	They are closely related to zonal polynomials, which are Jack polynomials with parameter $\beta=1$, where $\beta\coloneqq 2/\alpha$.
	The complex and quaternion Wishart ensembles correspond to $\beta=2$ (Schur polynomials) and $\beta=4$ (quaternionic zonal polynomials), respectively.
	
	In \cite{betaWishart}, the $\beta$-Wishart ensembles were introduced. 
	The parameter $\beta$ interpolates between the real ($\beta=1$), complex ($\beta=2$), and quaternionic ($\beta=4$) Wishart ensembles, enabling a unified perspective.
	The eigenvalue distributions of $\beta$-Wishart matrices were studied in \cite{EK14}.
	Jack polynomials and Jack hypergeometric series are key tools in such generalizations.

	In \cite{BO05}, Borodin and Olshanski studied the $z$ measures on partitions and showed that the averages of the products of certain characteristic polynomials are given by a Jack series $\hyper[\widehat{F}]{2}{1}$. This series is different from Macdonald's $\hyper{2}{1}$, although for $n=1$ both series reduce to Gauss's classical $\hyper{2}{1}$.
	In Appendix \ref{appendix}, we give differential equations characterizing $\hyper[\widehat{F}]{2}{1}$.
\subsection{Main results}

	In this paper we construct differential equations that characterize $\hyper{p}{q}(x)$ and $\hyper{p}{q}(x,y)$. More precisely, we construct four partial differential operators:
	\begin{align*}
		\L=\hyper[\L]{}{q}^{(x)}(\underline{b}), \quad
		\M=\hyper[\M]{p}{}^{(x)}(\underline{a}), \quad
		\N=\hyper[\N]{}{q}^{(x)}(\underline{b}), \quad
		\R=\hyper[\R]{p}{}^{(x)}(\underline{a}).
	\end{align*}
	 The operator $\L$ \textit{lowers} degree by one, $\R$ \textit{raises} it by one, while $\M$ and $\N$ \textit{preserve} degree and act diagonally on Jack polynomials $J_\lambda(x)$.
	
	Our lowering and raising operators, $\L$ and $\R$, admit a concise expression, using iterated commutators of the Laplace--Beltrami operator $\square$ with the differential operator $E_1=\sum_i \frac{\partial}{\partial x_i}$ and the multiplication operator $e_1=\sum_i x_i$, respectively.
	Our eigen-operators, $\M$ and $\N$, involve combinations of the Debiard--Sekiguchi operators $D_r$.
	The ingredients are recalled in \cref{sec:pre} and the four operators are defined explicitly in \cref{sec:2arg,sec:1arg}.
	We now define 
	\begin{align*}
		\mathcal A^{(x,y)} = \L^{(x)}-\R^{(y)}, \quad
		\mathcal B^{(x)}= \L^{(x)}-\M^{(x)}, \quad \mathcal C^{(x)}= \N^{(x)}-\R^{(x)}.
	\end{align*}
	We also write $\mathscr{F}^{(x)}$ and $\mathscr{F}^{(x,y)}_D$ for the spaces consisting of power series of the following forms:
	\begin{align*}
	\mathscr{F}^{(x)}=\big\{\sum_\lambda c_\lambda J_\lambda(x)\big\}, \quad
	\mathscr{F}^{(x,y)}=\big\{\sum_\lambda c_\lambda J_\lambda(x)J_\lambda(y)\big\}.
	\end{align*}
	Write $\bm0_n=(0,\dots,0)$ ($n$ times).
	
\begin{namedtheorem*}{Theorem~A} The series $\hyper{p}{q}(x,y)$ is the unique solution in $\mathscr{F}^{(x,y)}_D$ of the differential equation
	\begin{align}\label{eqn:A}
		\mathcal A^{(x,y)}(F(x,y)) =0, \quad F(\bm0_n,\bm0_n)=1.
	\end{align}
\end{namedtheorem*}
\begin{namedtheorem*}{Theorem~B} The series $\hyper{p}{q}(x)$ is the unique solution in $\mathscr{F}^{(x)}$ of the differential equation 
	\begin{align}\label{eqn:B}
		\mathcal B(F(x))=0,\quad F(\bm0_n)=1
	\end{align}
subject to a certain stability condition (see \cref{eqn:pDq1x(F)-m} below).
\end{namedtheorem*}
\begin{namedtheorem*}{Theorem~C}  The series $\hyper{p}{q}(x)$ is the unique solution in $\mathscr{F}^{(x)}$ of the differential equation
	\begin{align}\label{eqn:C}
		\mathcal C(F(x))=0, \quad F(\bm0_n)=1.
	\end{align}
\end{namedtheorem*}
The techniques of this paper can also be applied to Borodin and Olshanski's $\hyper[\widehat F]{2}{1}$.
In Appendix \ref{appendix}, we give analogs of \cref{thm:B,thm:C} for $\hyper[\widehat F]{2}{1}$.

\subsection{Previous results}

	Special cases of \cref{thm:A,thm:B} were previously known in the literature for a few values of $(p,q)$.  For the general Jack case, these are the five pairs, $(0,0)$, $(1,0)$, $(0,1)$, $(1,1)$ and $(2,1)$, marked by red squares in \cref{fig}. In the zonal case, results were also known for the two additional pairs $(2,2)$ and $(3,2)$, marked by blue dots in \cref{fig}.   
	
	\begin{figure}[H]
	\centering
		\begin{tikzpicture}[scale=1]
			\draw[step=1cm,gray!40,very thin] (0,0) grid (4,4);
			
			\draw[thick] (0,0) -- (4.2,0) node[right] {$p$};
			\draw[thick] (0,0) -- (0,4.2) node[above] {$q$};
			
			\foreach \x in {0,1,2,3,4}
			\draw (\x,0.08) -- (\x,-0.08) node[below,yshift=-2pt] {\small $\x$};
			\foreach \y in {0,1,2,3,4}
			\draw (0.08,\y) -- (-0.08,\y) node[left,xshift=-2pt] {\small $\y$};
			
			\node[circle, draw, fill=blue, inner sep=3pt] at (2,2) {};
			\node[circle, draw, fill=blue, inner sep=3pt] at (3,2) {};
			
			\node[rectangle, draw, fill=red, inner sep=3pt] at (2,1) {};
			\node[rectangle, draw, fill=red, inner sep=3pt] at (1,0) {};
			\node[rectangle, draw, fill=red, inner sep=3pt] at (1,1) {};
			\node[rectangle, draw, fill=red, inner sep=3pt] at (0,0) {};
			\node[rectangle, draw, fill=red, inner sep=3pt] at (0,1) {};
		\end{tikzpicture}
	\caption{Previously known differential equations for $\hyper{p}{q}$}
	\label{fig}
	\end{figure}
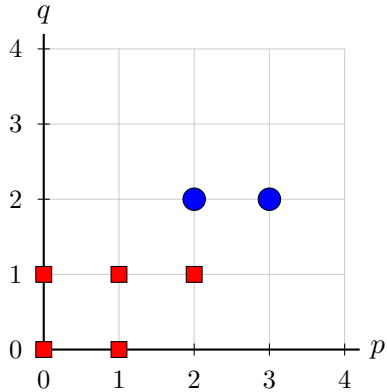

	For the five red squares in the Jack case, Macdonald \cite{Mac-HG} constructed the operators $\mathcal L$, $\mathcal R$, and $\mathcal M$. He conjectured \cref{eqn:A} and verified \cref{eqn:B}, but did not address the issue of uniqueness. In the crucial case $(p,q)=(2,1)$, \cref{thm:B} was proved independently by Yan \cite{Yan92} and Kaneko \cite{Kan93}, and \cref{thm:A} by Baker--Forrester \cite{BF97}.
	
	In the zonal case, \cref{thm:B,thm:A} were proved earlier for the five red squares by Constantine--Muirhead \cite{CM72} and Muirhead \cite{M70}, respectively, and for the two blue dots by Fujikoshi \cite{Fujikoshi}.

\subsection{Techniques}
	We now briefly discuss the techniques used in our work.
	
	\subsubsection{\cref{thm:A} } The existing proofs of \cref{thm:A}, for the small number of cases in \cref{fig} discussed above, do not readily extend to larger $p$ and $q$.

	The key insight of this paper was twofold. The first was the realization that all existing differential operators could be written as the difference of two one-alphabet operators, $\mathcal A^{(x,y)}= \L^{(x)}-\R^{(y)}$. The second realization was that $\L$ and $\R$ could be expressed as explicit combinations of iterated commutators of the Laplace--Beltrami operator $\square$ with the differential operator $E_1=\sum_i \frac{\partial}{\partial x_i}$ and the multiplication operator $e_1=\sum_i y_i$, respectively. 

   It turns out that this approach generalizes to all $p,q$ and leads to a proof of \cref{thm:A}, which has two further ingredients, namely the Okounkov--Olshanski binomial theorem for Jack polynomials \cite{OO97} and some results of Lassalle \cite{Las90,Las98} for Macdonald polynomials.
	
	\subsubsection{\cref{thm:B}}  
	Muirhead \cite{M70} and Fujikoshi \cite{Fujikoshi} treated the zonal cases for small $p$ and $q$.
	In the Jack case, Macdonald \cite{Mac-HG}, Yan \cite{Yan92}, and Kaneko \cite{Kan93} generalized Muirhead's operator for $\hyper{2}{1}$ in a natural way (replacing $2$ by $\alpha$ in appropriate places). This operator $\mathcal B=\L-\M$ has two parts: $\L$ lowers the degree of Jack polynomials, while $\M$ acts as an eigen-operator.
	
	For $p=2$, Macdonald constructed eigen-operators in \cite[p.~47--49, Lemmas 2\&3]{Mac-HG}, with eigenvalues dictated by the structure of $\hyper{p}{q}$.
	The construction used the action of commutators among the differential operator $E_1$, the Euler operator $E_2$, and the Laplace--Beltrami operator $\square$ on $\hyper{0}{0} = \exp(e_1)$. 
	For $p=3$, we computed higher commutators and their actions on $\hyper{0}{0}$ and found that the Debiard--Sekiguchi operators are also required.  
	Despite the considerable computational complexity, we succeeded in constructing the eigen-operator $\M$ for $p=3$, thereby generalizing Fujikoshi's operator in the zonal case.  
	In principle, one could try to use this approach to find eigen-operators for higher $p$ in a step-by-step manner, but in practice, the computations soon grow intractable.
	
	Our actual approach departs significantly from Macdonald's.  
	Rather than constructing each eigen-operator with prescribed eigenvalue \textit{individually}, we obtain a closed-form formula for the generating function of \textit{all} such eigenvalues. This, in turn, yields a generating function of eigen-operators expressed in terms of all Debiard--Sekiguchi operators. 
	
	In proving uniqueness in \cref{thm:B}, we encounter difficulties also present in \cite{M70,Fujikoshi,Yan92,Kan93}: the recursion relation \cref{eqn:rec-up} proceeds in the wrong direction, producing an under-determined system.
	In previous results, it was assumed that for each $n$, the $n$-variate hypergeometric series is annihilated by the $n$-variate differential operator.	
	This enabled them to double the number of equations, thus obtaining an over-determined system. With some effort, it can be shown that this system has a unique solution.
	Instead, we introduce a stability assumption on the hypergeometric series: for fixed $n$ and each $m\leq n$, the $m$-variate differential operator annihilates the $m$-variate hypergeometric series. With this alternative assumption, we can also prove the uniqueness.
	The necessity of such an extra assumption is illustrated by \cref{ex:stability}.

	\subsubsection{\cref{thm:C}}
	The non-uniqueness in \cref{thm:B} stems from the use of the lowering operator $\L$, which produces the problematic recursion \cref{eqn:rec-up}.  
	Here we introduce a \emph{new idea}: to use the raising operator $\R$ instead, and to construct a corresponding eigen-operator $\N$ by a similar generating-function approach.  
	This idea leads directly to a simple recursion \cref{eqn:C-rec-1}, and the uniqueness then follows readily, as shown in \cref{thm:C}.
\subsection{Organization}
	The paper is organized as follows:
	
	In \cref{sec:pre}, we recall standard definitions and facts concerning partitions and symmetric polynomials, and introduce the notation used throughout the paper. Our notation largely follows \cite{Mac15,Mac-HG}, with minor modifications.
	
	In \cref{sec:2arg,sec:1arg}, we first address the two-alphabet case and then the one-alphabet case. Each section begins with some examples to illustrate the core ideas. We then define the differential operators and state and prove the corresponding theorems.
	
	In \cref{sec:future}, we discuss directions for future work, including the ideal $\hyper[\mathcal A]{p}{q}$ of annihilating differential operators for $\hyper{p}{q}$, $q$-analog and Macdonald polynomial analog of the paper, as well as the non-symmetric analog.

	In Appendix \ref{appendix}, we apply the technique of the paper to Borodin and Olshanski's hypergeometric function $\hyper[\widehat{F}]{2}{1}$ and obtain differential equation characterizations of the function. 
\section{Preliminaries}\label{sec:pre}
Throughout let $n\geq1$ be the number of variables and let $x=(x_1,\dots,x_n)$ and $y=(y_1,\dots,y_n)$. We denote $\bm1_n=(1,\dots,1)$ and $\bm0_n=(0,\dots,0)$ ($n$ times).
\subsection{Partitions and Pochhammer symbols}
	A \mydef{partition} is a tuple $\lambda=(\lambda_1,\lambda_2,\dots)$ of non-negative integers in weakly decreasing order $\lambda_1\geq\lambda_2\cdots$, such that only finitely many are non-zero.
	The \mydef{length} $\ell(\lambda)$ and the \mydef{size} $|\lambda|$ are the number and sum of these non-zero parts: 
	\begin{align}
		\ell(\lambda)\coloneqq\max\Set{i}{\lambda_i>0}, \quad |\lambda|\coloneqq\lambda_1+\dots+\lambda_{\ell(\lambda)}
	\end{align}
	We denote by $(0)$ the zero partition.
	
	We identify a partition with its \mydef{Young diagram}, namely,  $$\Set{(i,j)\in\Z^2}{1\leq j\leq \lambda_i,\,1\leq i\leq n}.$$
	The \mydef{conjugate} of $\lambda$, denoted $\lambda'$, is the partition whose Young diagram is the transpose of that of $\lambda$. Equivalently, 
	\begin{align*}
		\lambda_j'=\Set{i}{\lambda_i \geq j}, \qquad j \geq 1.
	\end{align*}
	
	Throughout we restrict to partitions of length at most $n$ and denote the set of such partitions by $\mathcal P_n$. We write $\mathcal P_n'$ for the set of their conjugates, i.e., partitions of arbitrary length whose largest part $\lambda_1$ is at most $n$.
	
	For $(i,j)\in\lambda$, the \mydef{arm}, \mydef{co-arm}, \mydef{leg}, and \mydef{co-leg} are
	\begin{align}
		a_\lambda(i,j) \coloneqq \lambda_i-j,\quad a'_\lambda(i,j) \coloneqq j-1,\quad
		l_\lambda(i,j) \coloneqq \lambda_j'-i,\quad l_\lambda'(i,j) \coloneqq i-1.
	\end{align}
	For example, if $\lambda=(7,7,6,4,4,2,1)$, then $\lambda'=(7,6,5,5,3,3,2)$, and the box $s=(3,2)$ has $a_\lambda(s)=4$, $a_\lambda'(s)=1$, $l_\lambda(s)=3$, and $l_\lambda'(s)=2$.
	\begin{equation*}
		\scalebox{0.8}{\begin{ytableau}
				\phantom{1}&l'&\phantom{1}&\phantom{1}&\phantom{1}&\phantom{1}&\phantom{1}\\
				\phantom{1}&l'&\phantom{1}&\phantom{1}&\phantom{1}&\phantom{1}&\phantom{1}\\
				a'&s&a&a&a&a\\
				\phantom{1}&l&\phantom{1}&\phantom{1}\\
				\phantom{1}&l&\phantom{1}&\phantom{1}\\
				\phantom{1}&l\\
				\phantom{1}
		\end{ytableau}}
	\end{equation*}
	
	We use the following partial orders on $\mathcal P_n$.
	We say $\lambda$ \mydef{contains} $\mu$, written $\lambda\supseteq\mu$, if $\lambda_i\geq\mu_i$ for $1\leq i\leq n$;
	$\lambda$ \mydef{covers} $\mu$, $\lambda\cover\mu$, if $\lambda\supseteq\mu$ and $|\lambda|=|\mu|+1$; 
	$\lambda$ \mydef{dominates} $\mu$, $\lambda\geq\mu$, if $|\lambda|=|\mu|$ and $\lambda_1+\dots+\lambda_i\geq\mu_1+\dots+\mu_i$ for $1\leq i\leq n$.
	
	The usual \mydef{Pochhammer symbol} is 
	\begin{align}
		\poch{a}{m}\coloneqq\frac{\Gamma(a+m)}{\Gamma(a)}=a(a+1)\cdots(a+m-1),	\quad m\geq0.
	\end{align}
	Define the \mydef{$\alpha$-Pochhammer symbol} by
	\begin{align}\label{eqn:alpha-poch}
		\poch{a;\alpha}{\lambda} 
		\coloneqq \prod_{(i,j)\in\lambda}\(a+a_\lambda'(i,j)-\frac{l_\lambda'(i,j)}{\alpha}\)
		=	\prod_{i=1}^n \poch{a-\frac{i-1}{\alpha}}{\lambda_i},
	\end{align}
	where the statistic $a_\lambda'(i,j)-\frac{l_\lambda'(i,j)}{\alpha}$ is called the \mydef{$\alpha$-content}.
	If $\lambda=(m)$ is a row partition, then $\poch{a;\alpha}{\lambda}=\poch{a}{m}$.
	For a tuple $\underline{a}=(a_1,\dots,a_p)$, let
	\begin{align}
		\poch{\underline{a};\alpha}{\lambda}\coloneqq \poch{a_1;\alpha}{\lambda}\cdots\poch{a_p;\alpha}{\lambda}.
	\end{align}
	The Pochhammer symbol in Constantine's definition \cref{eqn:pFq-Constantine-1,eqn:pFq-Constantine-2} is $\poch{a}{\lambda}=\poch{a;\alpha=2}{\lambda}$.
	From now on, we use the general parameter $\alpha$ and write simply $\poch{a}{\lambda}=\poch{a;\alpha}{\lambda}$ and $\poch{\underline{a}}{\lambda}=\poch{\underline{a};\alpha}{\lambda}$.
	
	Consider the sum of $\alpha$-contents
	\begin{align}
		\rho(\lambda) &\coloneqq \sum_{(i,j)\in\lambda}\(a_\lambda'(i,j)-\frac{l_\lambda'(i,j)}{\alpha}\) = \sum_{i=1}^n \(\frac{\lambda_i(\lambda_i-1)}{2}-\frac{\lambda_i(i-1)}{\alpha}\).
	\end{align}
	If $\lambda\cover\mu$, and $(i,j)$ is the added box, then we write
	\begin{align}
		\rho(\lambda/\mu) \coloneqq \rho(\lambda)-\rho(\mu) = a_\lambda'(i,j)-\frac{l_\lambda'(i,j)}{\alpha}.
	\end{align}
	In this case, we have the following crucial relation
	\begin{align}\label{eqn:poch-ratio}
		\frac{\poch{a}{\lambda}}{\poch{a}{\mu}} = a+\rho(\lambda/\mu).
	\end{align}

\subsection{Symmetric polynomials}
	Let $\Lambda_{n,\Q}$ be the algebra of symmetric polynomials in $n$ variables over $\Q$.
	For $\eta=(\eta_1,\dots,\eta_n)\in\mathbb Z_{\geq0}^n$, let $x^\eta \coloneqq x_1^{\eta_1}\cdots x_n^{\eta_n}$.
	The \mydef{monomial symmetric polynomial} is 
	\begin{align}
		m_\lambda(x) \coloneqq \sum_{\eta} x^\eta
	\end{align}
	where $\eta$ runs over all distinct permutations of $\lambda$. 
	Then $(m_\lambda)_{\lambda\in\mathcal P_n}$ forms a natural basis of $\Lambda_{n,\Q}$.
	
	For $1\leq r\leq n$, the $r$-th \mydef{elementary symmetric polynomial} $e_r$ and \mydef{power sum} $p_r$ are
	\begin{align}
		e_r(x) \coloneqq m_{(1^r)}(x) = \sum_{1\leq i_1<\dots<i_r\leq n}x_{i_1}\dots x_{i_r},	\text{\quad and \quad}
		p_r(x) \coloneqq m_{(r)}(x) = \sum_{1\leq i\leq n} x_i^r.
	\end{align}
	For a partition $\lambda=(\lambda_1,\dots,\lambda_n)$, define $e_\lambda \coloneqq e_{\lambda_1}\cdots e_{\lambda_l}$ and $p_\lambda \coloneqq p_{\lambda_1}\cdots p_{\lambda_l}$, where $l=\ell(\lambda)$.
	Then $(e_\lambda)_{\lambda\in\mathcal P_n'}$ and $(p_\lambda)_{\lambda\in\mathcal P_n'}$ also form bases for $\Lambda_{n,\Q}$.
	
	The \mydef{Schur polynomial} $s_\lambda$ is 
	\begin{align}
		s_\lambda(x) \coloneqq \frac{\det(x_i^{n-j+\lambda_j})_{1\leq i,j\leq n}}{\det(x_i^{n-j})_{1\leq i,j\leq n}},
	\end{align}
	where the denominator $\det(x_i^{n-j})_{1\leq i,j\leq n}$ is the Vandermonde determinant
	\begin{align}
		V(x)\coloneqq \prod_{i<j} (x_i-x_j).
	\end{align}
	Then $(s_\lambda)_{\lambda\in\mathcal P_n}$ also forms a basis for $\Lambda_{n,\Q}$.
	
\subsection{Jack polynomials}
	Let $\Lambda_{n,\Q(\alpha)}=\Lambda_{n,\Q}\otimes\Q(\alpha)$, where $\alpha$ is an indeterminate over $\Q$.
	Define the \mydef{$\alpha$-Hall inner product} on $\Lambda_{n,\Q(\alpha)}$ by 
	\begin{align}
	\langle p_\lambda,p_\mu\rangle_{\alpha} \coloneqq \delta_{\lambda\mu} z_\lambda \alpha^{\ell(\lambda)},
	\end{align}
	where $z_\lambda\coloneqq\prod_r (r^{m_r}m_r!)$ and $m_r\coloneqq \#\Set{i}{\lambda_i=r}$ is the multiplicity of $r$ in $\lambda$.
	
	It is well known (\cite{St89} and \cite[Ch.~VI]{Mac15}) that the \mydef{integral form Jack polynomial} $J_\lambda=J_\lambda(\cdot;\alpha)$ is uniquely determined by orthogonality, triangularity and normalization:
	\begin{gather}
		\langle J_\lambda,J_\mu\rangle_{\alpha} = 0,\quad	\lambda\neq\mu,	\\
		J_\lambda=\sum_{\mu\leq\lambda}\tilde K_{\lambda\mu}m_\mu,	\\
		\tilde K_{\lambda,(1^n)}=1.
	\end{gather}
	The coefficients $\tilde K_{\lambda\mu}=\tilde K_{\lambda\mu}(\alpha)$ are generalizations of the Kostka numbers $K_{\lambda\mu}$ for Schur polynomials.
	Knop--Sahi \cite{KSinv} gave a combinatorial interpretation for $\tilde K_{\lambda\mu}(\alpha)$ and showed that they are polynomials with non-negative integer coefficients in $\alpha$.
	
	Jack polynomials specialize to several familiar families.
	At $\alpha=0,\,1$ and $\infty$, $J_\lambda(\cdot;\alpha)$ becomes $e_{\lambda'}$, $s_\lambda$ and $m_\lambda $ (up to normalization), respectively.
	In addition, when $\alpha=2$, $J_\lambda(\cdot;\alpha)$ is the zonal polynomial, see \cite[Ch.~VII]{Mac15} and \cite[Ch.~7]{Muirhead82}.
	
	Jack polynomials are \textbf{stable} in the sense that $m$-variate ($m\leq n$) Jack polynomials are specialization of $n$-variate ones:
	\begin{align}\label{eqn:stable}
		J_\lambda(x_1,\dots,x_{m};\alpha) = J_\lambda(x_1,\dots,x_m,0,\dots,0;\alpha),
		\quad  \ell(\lambda)\leq m.
	\end{align}
	
	Define the \mydef{$\alpha$-hook-length} functions:
	\begin{align}
		c_\lambda(i,j) &\coloneqq a_\lambda(i,j)\alpha+l_\lambda(i,j)+1=(\lambda_i-j)\alpha+\lambda_j'-i+1,
		\\c_\lambda'(i,j) &\coloneqq (a_\lambda(i,j)+1)\alpha+l_\lambda(i,j)= (\lambda_i-j+1)\alpha+\lambda_j'-i,
	\end{align}
	and set
	\begin{align}
		c_\lambda\coloneqq \prod_{(i,j)\in\lambda} c_\lambda(i,j), \quad	
		c_\lambda'\coloneqq \prod_{(i,j)\in\lambda} c_\lambda'(i,j),\quad
		j_\lambda\coloneqq c_\lambda c_\lambda'.
	\end{align}
	
	These satisfy $\langle J_\lambda,J_\lambda\rangle_\alpha=j_\lambda$. 
	The \mydef{dual form} Jack polynomials is
	\begin{align}
	J_\lambda^* \coloneqq \frac{J_\lambda}{j_\lambda }.
	\end{align}
	In statistics (e.g., \cite{Yan92,betaWishart}), one often uses the \mydef{$C$-form}, normalized by
	\begin{align}
		(x_1+\dots+x_n)^d = \sum_{|\lambda|=d}C_\lambda^{(\beta)}(x).
	\end{align}
	This form relates to the dual form via
	\begin{align}
		C_\lambda^{(\beta)}(x) = (2/\beta)^{|\lambda|} \cdot |\lambda|! \cdot  J_\lambda^*(x;\alpha=2/\beta).
	\end{align}
	The zonal polynomial in \cref{eqn:pFq-Constantine-1,eqn:pFq-Constantine-2} and in \cite{CM72,Fujikoshi,Muirhead82} is $C_\lambda^{(\beta=1)}$.
	From now on, we do not use the $C$-form; instead, the symbol $C_\lambda$ will denote coefficients.
	
	By \cite[VI.(10.25)]{Mac15}, the evaluation of $J_\lambda$ at $\bm1_n=(1,\dots,1)$ is
	\begin{align}
		J_\lambda(\bm1_n;\alpha)  =\prod_{(i,j)\in\lambda}((j-1)\alpha+n-i+1) = \alpha^{|\lambda|} \poch{n/\alpha}{\lambda}.
	\end{align}
	The \mydef{unital form} is
	\begin{align}
		\Omega_\lambda(x;\alpha) \coloneqq \frac{J_\lambda(x;\alpha)}{J_\lambda(\bm1_n;\alpha)},
	\end{align}
	so that $\Omega_\lambda(\bm1_n;\alpha)=1$.
	
\subsection{Binomial formula}
The following \mydef{binomial formula for Jack polynomials} is fundamental:
\begin{align}\label{eqn:bino}
	\Omega_{\lambda}(x+t\bm1_n;\alpha) = \sum_{\mu\subseteq\lambda}t^{|\lambda|-|\mu|}\binom{\lambda}{\mu}\Omega_\mu(x;\alpha),
\end{align}
where $t\bm1_n=(t,\dots,t)$ and $\binom{\lambda}{\mu}$ is the \mydef{generalized binomial coefficient}.

These coefficients were first studied in the zonal case in statistics \cite{Bi74,Muirhead82}, and in the Schur case, Lascoux \cite{Lascoux} used them to compute the Chern classes of the exterior and symmetric squares of a vector bundle (see also \cite[p.47~Example~10]{Mac15}).
In the 1990s, they were studied by Lassalle \cite{Las90,Las98}, Kaneko \cite{Kan93}, Okounkov--Olshanski \cite{OO-schur,OO97}.
The generalized binomial coefficients can be given by evaluation of \mydef{interpolation Jack polynomials}, first studied in \cite{Sahi94,KS96,OO97}.
Recently, the $BC$-symmetry analog of the binomial coefficients and interpolation polynomials were studied in \cite{CS24}.

Although \cref{eqn:bino} depends on the number of variables $n$, the generalized binomial coefficients $\binom{\lambda}{\mu}$ \textit{do not} depend on $n$.
They admit explicit formulas in the adjacent case $\lambda\cover\mu$, and the general $\binom{\lambda}{\mu}$ can be obtained recursively via a weighted-sum formula. See \cite{Kan93,Las98,Sahi-Jack,CS24}.
\begin{fact}\label{fact:bino}
	Let $\lambda\cover\mu$ with $\lambda/\mu=(i_0,\lambda_{i_0})$, and let $R$ and $C$ be the set of other boxes in the row and column of $\lambda/\mu$, respectively, that is, 
	\begin{align}\label{eqn:RC}
		R=\Set{(i_0,j)}{1\leq j\leq \lambda_{i_0}-1},\quad C=\Set{(i,\lambda_{i_0})}{1\leq i\leq i_0-1}.
	\end{align}
	Then
	\begin{align}\label{eqn:bino-comb}
		\binom{\lambda}{\mu} = \prod_{s\in C}\frac{c_\lambda(s)}{c_\mu(s)} \prod_{s\in R}\frac{c_\lambda'(s)}{c_\mu'(s)}.
	\end{align}
\end{fact}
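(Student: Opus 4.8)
The plan is to isolate the adjacent coefficient directly from the binomial formula \cref{eqn:bino} and then reduce everything to an explicit one-box lowering rule. Writing $E_1=\sum_i\frac{\partial}{\partial x_i}$ for the infinitesimal generator of the translation $x_i\mapsto x_i+t$, I would differentiate \cref{eqn:bino} once in $t$ and set $t=0$. Since the $\mu$-term carries the factor $t^{|\lambda|-|\mu|}$, only the indices with $\lambda\cover\mu$ survive, giving
\begin{align*}
E_1\,\Omega_\lambda(x)=\sum_{\lambda\cover\mu}\binom{\lambda}{\mu}\,\Omega_\mu(x).
\end{align*}
Passing to the integral form via $\Omega_\nu=J_\nu/J_\nu(\bm1_n)$ then identifies the adjacent binomial coefficient as a structure constant of $E_1$ on the Jack basis,
\begin{align*}
\binom{\lambda}{\mu}=\frac{J_\lambda(\bm1_n)}{J_\mu(\bm1_n)}\,\big[\text{coefficient of }J_\mu\text{ in }E_1 J_\lambda\big],
\qquad J_\nu(\bm1_n)=\alpha^{|\nu|}\poch{n/\alpha}{\nu}.
\end{align*}

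The heart of the argument is an explicit one-box lowering formula $E_1 J_\lambda=\sum_{\lambda\cover\mu}e_{\lambda/\mu}\,J_\mu$. I would obtain the coefficients $e_{\lambda/\mu}$ from the one-box Pieri rule for multiplication by $e_1=p_1$ together with a duality computation; the subtle point is that $E_1$ is \emph{not} the Hall-adjoint of $e_1$ (indeed $E_1 J_{(1)}=n$ while $\alpha\,\partial_{p_1}J_{(1)}=\alpha$), so $e_{\lambda/\mu}$ genuinely depends on $n$. What saves the computation is that this $n$-dependence is exactly compensated by the ratio $J_\lambda(\bm1_n)/J_\mu(\bm1_n)$, so that the final $\binom{\lambda}{\mu}$ is $n$-independent, as it must be. An alternative route for this step, which sidesteps the $E_1$ bookkeeping entirely, is the theory of interpolation (shifted) Jack polynomials of Okounkov--Olshanski \cite{OO97}: there $\binom{\lambda}{\mu}$ is a ratio of evaluations of $J_\mu^*$, and the adjacent case follows from its extra-vanishing together with the normalization of its top-degree part $J_\mu$.

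The last step is the combinatorial simplification to the asserted shape. Because $\lambda\cover\mu$ differs from $\mu$ by a single addable box $s_0=(i_0,\lambda_{i_0})$, inserting $s_0$ raises by one the leg of every box in $C$ and the arm of every box in $R$, while leaving the arms and legs of all boxes outside the row and column of $s_0$ unchanged; at $s_0$ itself one has $a_\lambda(s_0)=l_\lambda(s_0)=0$. Hence every factor indexed by a box $s\notin R\cup C\cup\{s_0\}$ is trivial, and the computation collapses onto $R$ and $C$; the precise pattern—$c_\lambda/c_\mu$ over the column $C$ and $c_\lambda'/c_\mu'$ over the row $R$—is then dictated by the explicit form of the one-box coefficient, yielding exactly \cref{eqn:bino-comb}. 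I would cross-check the normalization against the identity $\sum_{\lambda\cover\mu}\binom{\lambda}{\mu}=|\lambda|$, which comes from specializing \cref{eqn:bino} at $x=s\bm1_n$ and comparing $(s+t)^{|\lambda|}$ with the right-hand side.

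I expect the main obstacle to be the second step: pinning down $e_{\lambda/\mu}$ with its correct $n$-dependence and verifying that, after multiplication by the principal-specialization ratio $J_\lambda(\bm1_n)/J_\mu(\bm1_n)$, all of that $n$-dependence cancels and precisely the column factors $c_\lambda/c_\mu$ and the row factors $c_\lambda'/c_\mu'$ survive, with no leftover $c_\lambda'/c_\mu'$ on $C$ or $c_\lambda/c_\mu$ on $R$. I would keep the binomial-formula route as the primary argument, since it relies only on tools already in hand, and fall back on the interpolation-polynomial route to supply the explicit one-box coefficient should the direct $E_1$ computation prove unwieldy.
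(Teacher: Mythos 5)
The paper never proves this statement: it is recorded as a \emph{Fact} and attributed to the literature (\cite{Kan93,Las98,Sahi-Jack,CS24}), so the question is whether your sketch would amount to a valid self-contained proof. It would not, because its central step has a genuine gap. You define $e_{\lambda/\mu}$ as the coefficient of $J_\mu$ in $E_1J_\lambda$ and propose to compute it ``from the one-box Pieri rule for multiplication by $e_1=p_1$ together with a duality computation.'' But the Hall-adjoint of multiplication by $p_1$ with respect to $\langle\cdot,\cdot\rangle_\alpha$ is $\alpha\,\partial/\partial p_1$, \emph{not} $E_1$ --- exactly the discrepancy you yourself point out ($E_1$ sends $p_k\mapsto k\,p_{k-1}$ with $p_0=n$, whereas $\partial_{p_1}$ kills $p_k$ for $k\geq 2$). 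Dualizing the Pieri rule therefore yields the expansion of $\alpha\,\partial_{p_1}J_\lambda$ in the $J_\mu$, and says nothing about $E_1J_\lambda$; the difference $E_1-n\,\partial_{p_1}=\sum_{k\geq2}k\,p_{k-1}\partial_{p_k}$ has no a priori known action on Jack polynomials. Observing that the $n$-dependence must cancel against $J_\mu(\bm1_n)/J_\lambda(\bm1_n)$ is a consistency check, not a computation. The statement that the translation coefficients of \cref{eqn:bino} (equivalently, the coefficients of $E_1\Omega_\lambda$) agree with the $e_1$-Pieri coefficients up to the normalization $\alpha\,j_\mu/j_\lambda$ is precisely the nontrivial content of \cref{fact:bino}; it is a theorem (Lassalle, Kaneko, Okounkov--Olshanski), not a formal adjointness argument. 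Note that the paper runs the logic in the opposite direction: in \cref{lem}, part (3), it \emph{uses} \cref{fact:bino} together with Macdonald's Pieri rule to establish \cref{eqn:e1}.

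Two smaller points. First, your identification of the adjacent coefficient is inverted: from $\Omega_\nu=J_\nu/J_\nu(\bm1_n)$ one gets $\binom{\lambda}{\mu}=\frac{J_\mu(\bm1_n)}{J_\lambda(\bm1_n)}\,e_{\lambda/\mu}$, not $\frac{J_\lambda(\bm1_n)}{J_\mu(\bm1_n)}\,e_{\lambda/\mu}$. Second, your step-3 combinatorics is correct (only boxes in $R\cup C$ have an arm or leg changed by adding $s_0$, and $a_\lambda(s_0)=l_\lambda(s_0)=0$), but it cannot by itself ``dictate'' that the column contributes $c_\lambda/c_\mu$ and the row $c_\lambda'/c_\mu'$ rather than some other mixture of hook ratios; that information must come from the explicit one-box coefficient you never validly obtain. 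Your fallback route via interpolation (shifted) Jack polynomials --- $\binom{\lambda}{\mu}$ as a ratio of evaluations, with the adjacent case extracted from extra vanishing plus the evaluation at $\mu+\varepsilon_{i_0}$ --- is sound and is essentially how the cited literature proceeds; but in your write-up it is only a gesture, and it, not the $E_1$ route, would have to carry the whole proof.
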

	
\subsection{Jack hypergeometric series}
	We now define the $n$-variate \mydef{hypergeometric series associated with Jack polynomials}, or \mydef{Jack hypergeometric series}:
	\begin{align}
		\hyper{p}{q}(x) 
		= \hyper{p}{q}^{(n)}(\underline{a};\underline{b};x;\alpha)	
		&\coloneqq
		\sum_{\lambda\in\mathcal P_n} \frac{\poch{\underline{a}}{\lambda}}{\poch{\underline{b}}{\lambda}} \alpha^{|\lambda|} J_\lambda^*(x;\alpha),	\label{eqn:pFq-Mac-1-new}\\ 
		\hyper{p}{q}(x,y) 
		= \hyper{p}{q}^{(n)}(\underline{a};\underline{b};x,y;\alpha)	
		&\coloneqq	
		\sum_{\lambda\in\mathcal P_n} \frac{\poch{\underline{a}}{\lambda}}{\poch{\underline{b}}{\lambda}} \alpha^{|\lambda|} \Omega_\lambda(x;\alpha)J_\lambda^*(y;\alpha).\label{eqn:pFq-Mac-2-new}
	\end{align}
	Here $p,q\geq0$ are arbitrary, $\underline{a}=(a_1,\dots,a_p)$ and $\underline{b}=(b_1,\dots,b_q)$ are indeterminates.
	We view these as \textit{formal power series}; analytic convergence has been studied in \cite[Proposition~1]{Kan93} and \cite[Theorem~6.5]{BR23}.
	We often omit some parameters when there is no confusion.
	
	By stability of Jack polynomials \cref{eqn:stable}, Jack hypergeometric series in one alphabet are also stable:
	\begin{align}
		\hyper{p}{q}^{(m)}(\underline{a};\underline{b};x_1,\dots,x_m;\alpha)
		=\hyper{p}{q}^{(n)}(\underline{a};\underline{b};x_1,\dots,x_m,0\dots,0;\alpha).
	\end{align}
	One may then define Jack hypergeometric series in infinite many variables (in one alphabet), in the sense of \cite[Section I.2]{Mac15}.
	In this paper, we only work with the $n$-variate case and superscript $n$ will be omitted.
	
	It follows directly that
	\begin{align}
		\hyper{p}{q}(\underline{a};\underline{b};x,y;\alpha)=\hyper{p}{q}(\underline{a};\underline{b};y,x;\alpha),	\label{eqn:sym-xy}\quad
		\hyper{p}{q}(\underline{a};\underline{b};x,\bm1_n;\alpha)=	\hyper{p}{q}(\underline{a};\underline{b};x;\alpha).
	\end{align}
	
	Macdonald proved the following special cases in \cite[\S6]{Mac-HG}:
	\begin{align}
		\hyper{0}{0}(x;\alpha) &= \exp(p_1(x)),\label{eqn:0F0}	\\
		\hyper{1}{0}(a;x;\alpha) &= \prod_{i=1}^n (1-x_i)^{-a}.	\notag
	\end{align}
	See also \cite[Proposition~3.1]{Yan92}.
	Macdonald noted that the Cauchy identity for Jack polynomials leads to the following summation formula:
	\begin{align}
		\hyper{1}{0}(n/\alpha;x,y;\alpha) &= \sum_{\lambda\in\mathcal P_n} \frac{J_\lambda(x)J_\lambda(y)}{j_\lambda} = \prod_{i,j=1}^n(1-x_iy_j)^{-1/\alpha}. \notag
	\end{align}
	
\subsection{Differential operators}
	We recall some differential operators, all acting term-wise on formal power series.
	Our notation is slightly different from Macdonald's \cite[p.~47]{Mac-HG}.
	
	For $1\leq i\leq n$, let $\partial_i = \dpd{}{x_i}$.
	Define 
	\begin{align}
		E_r &\coloneqq \sum_{i=1}^n x_i^{r-1} \partial_{i},	\quad r\geq1,	\\
		\square&\coloneqq  \frac{1}{2}\sum_{i=1}^n x_i^2\partial_{i}^2+\frac{1}{\alpha}\sum_{1\leq i\neq j\leq n} \frac{x_ix_j}{x_i-x_j}\partial_{i}.
	\end{align}
	Macdonald our $\square$ by $\square_2$.
	We shall often write $\sum_{i=1}^n$ as $\sum_{i}$ and $\sum_{1\leq i\neq j\leq n}$ as $\sum_{i\neq j}$.
	Note that $E_2$ is the \mydef{Euler operator}, and $\square$ is the \mydef{Laplace--Beltrami operator} for Jack polynomials. 
	See \cite[Example~VI.3.3(e)]{Mac15}.
	The Laplace--Beltrami operator is closely related to the Calogero--Moser--Sutherland model. 
	See \cite{Rosler03} for an overview.	
	
	For operators $A$ and $B$, define $\ad_{A}(B) \coloneqq [A,B] = AB-BA$, and recursively $\ad_{A}^r(B) \coloneqq \ad_A(\ad_A^{r-1}(B))$, for $r\geq1$, with $\ad_{A}^0(B)=B$.
	We also recall $e_1(x)=p_1(x)=x_1+\dots+x_n$ acting by multiplication.
	
	We collect some properties of these operators, most can be found in \cite{Las98,Mac-HG}.
	We believe that \cref{eqn:square2-e1} is new, which is crucial in defining the raising operator $\R$ later.
	This identity is inspired by \cref{eqn:square2-E1}, which was first given in \cite[p.~47, Remark]{Mac-HG}.
	\begin{lemma}\label{lem}
		The following hold.
		\begin{enumerate}
			\item The operators $E_2$ and $\square$ act diagonally on $(J_\lambda)$ (or any form) by
			\begin{align}
				E_2(J_\lambda) &= |\lambda| \cdot J_\lambda,	\label{eqn:E2}\\
				\square(J_\lambda) &= \rho(\lambda)\cdot J_\lambda.\label{eqn:square2}
			\end{align}
			\item The operator $\exp (tE_1)$ acts by translation on $(\Omega_\lambda)$
			\begin{align}
				\exp(tE_1)(\Omega_\lambda(x;\alpha)) &=\Omega_\lambda(x+t\bm1_n;\alpha)	=\sum_{\mu\subseteq\lambda} t^{|\lambda|-|\mu|}\binom{\lambda}{\mu}\Omega_\mu(x;\alpha).
			\end{align}
			In particular, 
			\begin{align}
				E_1(\Omega_\lambda) &=	\sum_{\mu\coveredby\lambda} \binom{\lambda}{\mu}\Omega_\mu.	\label{eqn:E1}
			\end{align}
			\item Multiplication by $\exp(te_1)$ acts on $(J_\mu^*)$ by
			\begin{align}
				\exp(te_1) \cdot J_\mu^* &=	\sum_{\lambda\supset\mu}(t\alpha)^{|\lambda|-|\mu|}\binom{\lambda}{\mu}J_\lambda^*.\label{eqn:exp(te1)}
			\end{align}
			In particular,
			\begin{align}
				e_1 \cdot J_\mu^* &=	\alpha\sum_{\lambda\cover\mu}\binom{\lambda}{\mu}J_\lambda^*.\label{eqn:e1}
			\end{align}
			\item 
			For $r\geq0$, the commutators act by
			\begin{align}
				\(\ad_{-\square}^{r}(E_1)\)(\Omega_\lambda)
				&= \sum_{\mu\coveredby\lambda} \rho(\lambda/\mu)^{r} \binom{\lambda}{\mu}\Omega_\mu,\label{eqn:square2-E1}	\\
				\(\ad_{\square}^r(e_1)\)(J_\mu^*) &= \alpha \sum_{\lambda\cover\mu} \rho(\lambda/\mu)^{r} \binom{\lambda}{\mu}J_\lambda^*	\label{eqn:square2-e1}.
			\end{align}
		\end{enumerate}
	\end{lemma}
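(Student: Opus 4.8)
The plan is to handle the four parts in increasing order of novelty, reserving the real effort for part (4). For part (1) I would invoke standard facts: since $E_2=\sum_i x_i\partial_i$ is the Euler operator and $J_\lambda$ is homogeneous of degree $|\lambda|$, the relation \cref{eqn:E2} is immediate from Euler's identity for homogeneous functions; the eigenrelation \cref{eqn:square2} is the defining property of Jack polynomials as eigenfunctions of the Laplace--Beltrami operator, with eigenvalue the sum of $\alpha$-contents $\rho(\lambda)$, which I would cite from \cite[Ex.~VI.3.3]{Mac15}. Both relations then transfer to $J_\lambda^*$ and $\Omega_\lambda$, since these differ from $J_\lambda$ only by scalars.

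For part (2), the first equality is Taylor's theorem: $E_1=\sum_i\partial_i$ is the infinitesimal generator of the translation $x\mapsto x+t\bm1_n$, so $\exp(tE_1)$ acts by this translation on any polynomial, in particular on $\Omega_\lambda$. The second equality is exactly the binomial formula \cref{eqn:bino}. The ``in particular'' identity \cref{eqn:E1} is then read off as the coefficient of $t^1$, which forces $|\lambda|-|\mu|=1$, i.e.\ $\mu\coveredby\lambda$.

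For part (3), I would regard \cref{eqn:exp(te1)} as the dual binomial formula and \cref{eqn:e1} as its one-box specialization (the coefficient of $t^1$). The cleanest self-contained route is to pass from $J_\mu^*=J_\mu/j_\mu$ to the classical Pieri rule $p_1 J_\mu=\sum_{\lambda\cover\mu}\psi'_{\lambda/\mu}J_\lambda$ and then verify, using \cref{fact:bino} together with the hook-length definitions of $c_\lambda$ and $c_\lambda'$, the scalar identity $\psi'_{\lambda/\mu}\,j_\lambda/j_\mu=\alpha\binom{\lambda}{\mu}$; this is a finite computation in the arm/leg statistics. One could also simply cite \cite{Las98,Mac-HG}. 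I expect this to be the one genuinely fiddly point in parts (1)--(3): a naive adjointness argument relating $E_1^{(x)}$ to $e_1^{(y)}$ through the Cauchy kernel fails, since the Hall-dual of multiplication by $e_1$ is $\alpha\,\partial/\partial p_1$ rather than $E_1$, so the matching of the normalization constants must be done by hand.

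For part (4), which contains the new identity \cref{eqn:square2-e1}, I would give a uniform commutator ladder. The structural inputs are: $\square$ is diagonal with eigenvalue $\rho$ (part 1); $E_1$ and $e_1$ each shift $\lambda$ by a single box, lowering and raising respectively (parts 2 and 3); and $\rho(\lambda)-\rho(\mu)=\rho(\lambda/\mu)$ for $\lambda\cover\mu$ by definition of $\rho(\lambda/\mu)$. For the lowering identity, set $T_r=\ad_{-\square}^r(E_1)$ and induct on $r$, the base case $r=0$ being \cref{eqn:E1}; since $\ad_{-\square}(T_r)=T_r\square-\square T_r$ and $\square$ is diagonal, applying it to the inductive hypothesis gives
\begin{align*}
 T_{r+1}\Omega_\lambda = (T_r\square-\square T_r)\Omega_\lambda = \sum_{\mu\coveredby\lambda}\rho(\lambda/\mu)^r\bigl(\rho(\lambda)-\rho(\mu)\bigr)\binom{\lambda}{\mu}\Omega_\mu,
\end{align*}
which is the claim with exponent $r+1$. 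The new identity \cref{eqn:square2-e1} follows the identical pattern with $e_1$ in place of $E_1$ and the opposite sign $\ad_{\square}$ in place of $\ad_{-\square}$, inducting from \cref{eqn:e1}: because $e_1$ raises, $\square$ contributes the larger eigenvalue $\rho(\lambda)$ first, so $[\square,e_1]$ again produces the factor $\rho(\lambda)-\rho(\mu)=\rho(\lambda/\mu)$, leaving the overall $\alpha$ intact. I do not expect part (4) to pose real difficulty once parts (1)--(3) are in hand; the only thing to get right is the sign convention ($-\square$ for the lowering ladder, $+\square$ for the raising one), which is precisely what makes both commutators yield $+\rho(\lambda/\mu)^r$.
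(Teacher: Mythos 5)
Your parts (1), (2), and (4) are correct and coincide with the paper's own proof: part (1) is the citation to \cite[Example~VI.3.3(e)]{Mac15} (the eigenrelations are insensitive to which normalization of $J_\lambda$ one uses), part (2) is the Taylor-series/translation argument followed by the binomial formula \cref{eqn:bino}, and your commutator ladder in part (4) --- induction on $r$ with base cases \cref{eqn:E1} and \cref{eqn:e1}, using that $\square$ is diagonal so each bracket inserts a factor $\rho(\lambda)-\rho(\mu)=\rho(\lambda/\mu)$ --- is exactly the paper's computation, including the sign bookkeeping for $\ad_{-\square}$ versus $\ad_{\square}$. Your treatment of the adjacent case \cref{eqn:e1} in part (3) also matches the paper: there, \cref{eqn:e1} is derived from the Pieri rule (for Macdonald polynomials, specialized to Jack) and the scalar identity $\psi'_{\lambda/\mu}\,j_\lambda/j_\mu=\alpha\binom{\lambda}{\mu}$ is verified through the arm/leg statistics of \cref{fact:bino}, just as you propose.

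The gap is in part (3): your ``self-contained route'' proves only \cref{eqn:e1}, not the full identity \cref{eqn:exp(te1)}, and your reduction runs in the wrong logical direction --- \cref{eqn:e1} is the coefficient of $t^1$ in \cref{eqn:exp(te1)}, so it follows \emph{from} \cref{eqn:exp(te1)}, not conversely. To go up from the one-box case you must control all powers $e_1^k\cdot J_\mu^*$: iterating \cref{eqn:e1} expresses these as sums over saturated chains $\mu=\nu^0\coveredby\nu^1\coveredby\cdots\coveredby\nu^k=\lambda$ of products of adjacent binomial coefficients, and one then needs the fact that this chain sum equals $k!\binom{\lambda}{\mu}$ --- a genuine property of the generalized binomial coefficients that is not contained in \cref{fact:bino}. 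The paper closes exactly this hole: writing $\exp(te_1)\cdot J_\mu^*=\sum_\lambda(t\alpha)^{|\lambda|-|\mu|}b_{\lambda\mu}J_\lambda^*$, it derives the recursion
\begin{align*}
b_{\lambda\lambda}=1,\qquad (|\lambda|-|\mu|)\,b_{\lambda\mu}=\sum_{\rho\coveredby\lambda}b_{\lambda\rho}\,b_{\rho\mu},
\end{align*}
and invokes \cite{Sahi-Jack} for the fact that coefficients satisfying this recursion are uniquely determined by their values in the adjacent case, which were just shown to be $\binom{\lambda}{\mu}$. Your fallback of citing \cite{Las98} does not quite discharge the obligation either: \cref{eqn:exp(te1)} is Lassalle's \emph{definition} of his binomial coefficients, so one still owes the identification of those coefficients with the $\binom{\lambda}{\mu}$ defined by the binomial formula \cref{eqn:bino} and used throughout the paper --- which is the same adjacent-case-plus-uniqueness argument.
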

	\begin{proof}
		\begin{enumerate}
			\item See \cite[Example~VI.3.3(e)]{Mac15}.
			\item The univariate Taylor series is
			\begin{align*}
				f(\dots,x_i+t,\dots)
				&=f(\dots,x_i,\dots)+\partial_if(\dots,x_i,\dots)t+\partial_i^2f(\dots,x_i,\dots)\frac{t^2}{2!}+\cdots
				\\&=\exp(t\partial_{i})(f(x)),
			\end{align*}
			where ``$\dots$'' indicate that the variables are fixed.
			Since $\exp(tE_1) = \exp(t\partial_{1})\cdots \exp(t\partial_{n})$, the claim follows from iterating over $i=1,\dots,n$ and applying the binomial formula \cref{eqn:bino}.
			\item \cref{eqn:exp(te1)} is Lassalle's definition \cite[p.~320]{Las98} of the binomial coefficients. We give a direct proof here.
			
			We first prove \cref{eqn:e1}.
			The Pieri rule for Macdonald polynomials is in \cite[VI.(6.7$'$)\&(6.13)]{Mac15}. 
			Passing to the Jack case ($q=t^\alpha$, $\alpha\to1$) and adjusting the normalizations gives
			\begin{align*}
				e_1 \cdot J_\mu^* = \sum_{\lambda\cover\mu} \(\prod_{i<i_0}\frac{\alpha(\mu_i-\mu_{i_0})+i_0-i-1}{\alpha(\mu_i-\mu_{i_0})+i_0-i}\frac{\alpha(\mu_i-\mu_{i_0}-1)+i_0-i-1}{\alpha(\mu_i-\mu_{i_0}-1)+i_0-i}\)\cdot \frac{c_\lambda'}{c_\mu'}J_\lambda^*,
			\end{align*}
			where $i_0=i_0(\lambda)$ is the row of $\lambda/\mu$.
			For each $\lambda\cover\mu$, let $s=(i_0,\mu_{i_0}+1)\in C$ be as in \cref{fact:bino}, then one checks 
			\begin{align*}
				a_\mu(s)=a_\lambda(s)=\mu_i-\mu_{i_0}-1,\quad
				l_\mu(s)=l_\lambda(s)-1=i_0-i-1.
			\end{align*}
			Thus
			\begin{align*}
				\prod_{s\in C} \frac{c_\mu'(s)}{c_\lambda'(s)} &= \prod_{i<i_0} \frac{\alpha(\mu_i-\mu_{i_0})+i_0-i-1}{\alpha(\mu_i-\mu_{i_0})+i_0-i},	\\
				\prod_{s\in C} \frac{c_\lambda(s)}{c_\mu(s)} &= \prod_{i<i_0} \frac{\alpha(\mu_i-\mu_{i_0}-1)+i_0-i-1}{\alpha(\mu_i-\mu_{i_0}-1)+i_0-i}.
			\end{align*}
			By \cref{eqn:bino-comb}, 
			\begin{align*}
				\prod_{s\in C} \frac{c_\lambda(s)}{c_\mu(s)} \prod_{s\in C} \frac{c_\mu'(s)}{c_\lambda'(s)} \cdot \frac{c_\lambda'}{c_\mu'} = \alpha\binom{\lambda}{\mu},
			\end{align*}
			which proves \cref{eqn:e1}.
			
			Now we prove \cref{eqn:exp(te1)}.
			Define coefficients $b_{\lambda\mu}$ by
			\begin{align*}
				\exp(te_1) \cdot J_\mu^* &=	\sum_{\lambda}(t\alpha)^{|\lambda|-|\mu|}b_{\lambda\mu} J_\lambda^*.
			\end{align*}
			They satisfy the recursion
			\begin{align*}
				b_{\lambda\lambda}=1,\quad (|\lambda|-|\mu|)\cdot b_{\lambda\mu} = \sum_{\rho\coveredby\lambda} b_{\lambda\rho}b_{\rho\mu}.
			\end{align*}
			As shown in \cite{Sahi-Jack}, such coefficients are uniquely determined by the adjacent case $\lambda\cover\mu$. 
			Since we have established that $b_{\lambda\mu}=\binom{\lambda}{\mu}$ for $\lambda\cover\mu$, they must also agree in general.
			\item 
			We prove the first identity; the second is similar.
			Proceed by induction on $r$. The base case $r=0$ is \cref{eqn:E1}.
			For the inductive step, 
			\begin{align*}
				\(\ad_{-\square}^{r+1}(E_1)\)(\Omega_\lambda)
				&=	[\ad_{-\square}^{r}(E_1),\square](\Omega_\lambda)
				\\&=	\ad_{-\square}^{r}(E_1)\(\square(\Omega_\lambda)\) -\square\(\ad_{-\square}^{r}(E_1)(\Omega_\lambda)\)
				\\&=	\ad_{-\square}^{r}(E_1)(\rho(\lambda)\Omega_\lambda) -\square\(\sum_{\mu\coveredby\lambda} \rho(\lambda/\mu)^{r} \binom{\lambda}{\mu}\Omega_\mu\)
				\\&=	\rho(\lambda)\sum_{\mu\coveredby\lambda} \rho(\lambda/\mu)^{r} \binom{\lambda}{\mu}\Omega_\mu -\sum_{\mu\coveredby\lambda} \rho(\lambda/\mu)^{r} \binom{\lambda}{\mu}\rho(\mu)\Omega_\mu
				\\&=	\sum_{\mu\coveredby\lambda} \rho(\lambda/\mu)^{r+1} \binom{\lambda}{\mu}\Omega_\mu.
			\end{align*}
		\end{enumerate}
	\end{proof}
	
	\begin{example}\label{ex:ad}
		For small $r$, the commutators admit explicit formulas:
		\begin{align*}
			\square_1&\coloneqq \ad_{-\square}(E_1) = [E_1,\square] =	\sum_{i} x_i\partial_{i}^2+\frac{1}{\alpha}\sum_{i\neq j} \frac{x_i+x_j}{x_i-x_j}\partial_{i},\\
			E_3 &= \ad_{\square}(e_1) = [\square,e_1] = \sum_i x_i^2\partial_i,	\\
			\ad_{-\square}^2(E_1) &=	\sum_{i} \(x_i^2\partial_i^3 +x_i\partial_{i}^2 +\frac{1}{\alpha}\sum_{j:j\neq i}\frac{x_i^2+2x_ix_j}{x_i-x_j}\partial_{i}^2 \)
			\\&\=	+\frac{1}{\alpha^2} \sum_i \sum_{j:j\neq i} \(\sum_{k:k\neq i,j}\frac{-(x_i+x_j)x_k^2+(x_i-x_j)^2x_k+x_ix_j(x_i+x_j)}{(x_i-x_j)(x_i-x_k)(x_j-x_k)} -\frac{x_i+x_j}{x_i-x_j}\)\partial_{i},	\\
			\ad_{\square}^2(e_1) &
			=	\sum_i \(x_i^3\partial_{i}^2+x_i^2\partial_{i}\) +\frac{2}{\alpha}\sum_{i\neq j}\frac{x_i^2x_j}{x_i-x_j}\partial_{i}.
		\end{align*}
		From \cref{lem} with $r=1$, 
		\begin{align}
			\square_1(\Omega_\lambda) &=	[E_1,\square](\Omega_\lambda)	=\sum_{\mu\coveredby\lambda} \rho(\lambda/\mu)\binom{\lambda}{\mu}\Omega_\mu, \label{eqn:square1}	\\
			E_3(J_\mu^*) &=	[\square,e_1](J_\mu^*)	= \alpha \sum_{\lambda\cover\mu} \rho(\lambda/\mu)\binom{\lambda}{\mu}J_\lambda^*	\label{eqn:E3}.
		\end{align}
		\cref{eqn:square1} appeared in \cite[p.~47, Lemma~1]{Mac-HG} and \cref{eqn:E3} in \cite[(14.1)]{Las98} (whose $E_2$ is our $E_3$).
	\end{example}
\subsection{Debiard--Sekiguchi operators}
	Now we recall the Debiard--Sekiguchi operators. 
	There are various different definitions, for example, \cite{OO97} and \cite[p.~319, Example~3]{Mac15}.
	For us, it is most convenient to define the Debiard--Sekiguchi operator as
	\begin{align}
		D(t) = D(t;\alpha) \coloneqq \frac{1}{V(x)}\det\(x_i^{n-j}\(x_i\partial_{i}-(j-1)/\alpha+t\)\)_{1\leq i,j\leq n},	\label{eqn:Sekiguchi}
	\end{align}
	which acts diagonally on $(J_\lambda)$ as
	\begin{gather}
		D(t;\alpha) (J_\lambda) = \prod_{i=1}^n (\lambda_i-(i-1)/\alpha+t)\cdot J_\lambda.
	\end{gather}
	Define the operators $D_r$, for $0\leq r\leq n$, via the expansion 
	\begin{align}
		D(t) = \sum_{r=0}^n t^{n-r} D_r.
	\end{align}
	Then we have 
	\begin{align}
		D_r (J_\mu) = e_r(\tilde\mu) \cdot J_\mu,
	\end{align}
	where $\tilde\mu_i\coloneqq\mu_i-(i-1)/\alpha$ and $e_r$ is the $r$-th elementary symmetric polynomial.
	In particular, the operators $D_0,D_1,\dots,D_n$ commute pairwise.
	\begin{example}\label{ex:D}
	The first few operators are
		\begin{align*}
			D_0	&=	1,	\\
			D_1	&=	E_2-\frac{n(n-1)}{2\alpha},	\\
			D_2	&=	-\square+\frac12 E_2(E_2-1)-\frac{n(n-1)}{2\alpha} E_2+\frac{n(n-1)(n-2)(3n-1)}{24\alpha^2}.
		\end{align*}
	\end{example}
	Note that Debiard--Sekiguchi operators $D_r$ depend on $n$.
	For Debiard--Sekiguchi operators that work with infinitely many variables, see \cite{NS13}.
	
	We now have all the ingredients to construct the differential operators $\L$, $\M$, $\N$ and $\R$; their constructions are in the next two sections.
\section{Two alphabets}\label{sec:2arg}
	In this section we consider hypergeometric series with two alphabets $\hyper{p}{q}(\underline{a};\underline{b};x,y;\alpha)$.
	We shall write $\square^{(x)}$ and $\square^{(y)}$ to indicate the differential operator $\square$ with respect to variables $x=(x_1,\dots,x_n)$ and $y=(y_1,\dots,y_n)$, respectively, and similarly for other operators.

	We will sometimes omit the $\alpha$, $\underline{a}$ and $\underline{b}$ parameters in the expressions.
\subsection{Examples}
	We first work out the examples of $\hyper{2}{1}$ in \cite{CM72} and \cite[p.~51]{Mac-HG} and $\hyper{3}{2}$ in \cite[(4.8)]{Fujikoshi}.
\subsubsection{$\hyper{2}{1}$}
	Consider
	\begin{align*}
		\hyper{2}{1}(a,b;c;x,y;\alpha)	=	\sum_\lambda \frac{\poch{a}{\lambda}\poch{b}{\lambda}}{\poch{c}{\lambda}} \alpha^{|\lambda|}\Omega_\lambda(x)J_\lambda^*(y).
	\end{align*}
	We have
	\begin{align*}
		&\=	\([E_1^{(x)},\square^{(x)}]+cE_1^{(x)}\) (\hyper{2}{1}(a,b;c;x,y;\alpha)) 
		\\&=	\sum_\lambda \frac{\poch{a}{\lambda}\poch{b}{\lambda}}{\poch{c}{\lambda}} \alpha^{|\lambda|} \sum_{\mu\coveredby\lambda} \(\rho(\lambda/\mu)+c\)\binom{\lambda}{\mu}\Omega_\mu(x)J_\lambda^*(y)
		\\&=	\sum_\mu \frac{\poch{a}{\mu}\poch{b}{\mu}}{\poch{c}{\mu}}\alpha^{|\mu|} \sum_{\lambda\cover\mu} \alpha\(\rho(\lambda/\mu)+a\)\(\rho(\lambda/\mu)+b\) \binom{\lambda}{\mu}\Omega_\mu(x)J_\lambda^*(y)
	\\&=	\sum_\mu \frac{\poch{a}{\mu}\poch{b}{\mu}}{\poch{c}{\mu}}\alpha^{|\mu|} \sum_{\lambda\cover\mu} \alpha\(\rho(\lambda/\mu)^2+(a+b)\rho(\lambda/\mu)+ab\) \binom{\lambda}{\mu}\Omega_\mu(x)J_\lambda^*(y)	
		\\&=	\([\square^{(y)},[\square^{(y)},e_1(y)]]+(a+b)[\square^{(y)},e_1(y)]+abe_1(y)\)(\hyper{2}{1}(a,b;c;x,y;\alpha)),
	\end{align*}
	Let
	\begin{align*}
		&\=	\hyper[\mathcal D]{2}{1}^{(x,y)} \notag
		\\&=	[E_1^{(x)},\square^{(x)}]+cE_1^{(x)}  -[\square^{(y)},[\square^{(y)},e_1(y)]]-(a+b)[\square^{(y)},e_1(y)]-abe_1(y)
		\\&=	\sum_i \(x_i\partial_{x_i}^2+c\partial_{x_i}-y_i^3\partial_{y_i}^2-(a+b+1)y_i^2\partial_{y_i}-aby_i\) +\frac1\alpha\sum_{i\neq j}\(\frac{x_i+x_j}{x_i-x_j}\partial_{x_i} -\frac{2y_i^2y_j}{y_i-y_j}\partial_{y_i}\)\notag
		\\&=	\sum_i \(x_i\partial_{x_i}(x_i\partial_{x_i}+c-1)-y_i(y_i\partial_{y_i}+a)(y_i\partial_{y_i}+b)\) +\frac1\alpha\sum_{i\neq j}\(\frac{x_i+x_j}{x_i-x_j}\partial_{x_i} -\frac{2y_i^2y_j}{y_i-y_j}\partial_{y_i}\),\notag
	\end{align*}
	then
	\begin{align*}
		\hyper[\mathcal D]{2}{1}^{(x,y)}(\hyper{2}{1}(x,y))=0.
	\end{align*}
	It is not hard to see that our operator $\hyper[\mathcal D]{2}{1}^{(x,y)}$ reduces to Constantine--Muirhead's operator \cite[(1.2)]{CM72} for the zonal case when setting $\alpha=2$.
	Our operator $\hyper[\mathcal D]{2}{1}^{(x,y)}$ is slightly different from Macdonald's conjectural operator in \cite[p.~51]{Mac-HG} and is equal to the operator in \cite[Proposition A.1]{BF97}.
\subsubsection{$\hyper{3}{2}$}
	Now, let $\underline{a}=(a_1,a_2,a_3)$ and $\underline{b}=(b_1,b_2)$, and
	\begin{align*}
		\hyper{3}{2}(\underline{a};\underline{b};x,y;\alpha)	=	\sum_\lambda \frac{\poch{\underline{a}}{\lambda}}{\poch{\underline{b}}{\lambda}} \alpha^{|\lambda|}\Omega_\lambda(x)J_\lambda^*(y).
	\end{align*}
	Write $\rho$ for $\rho(\lambda/\mu)$.
	\begin{align*}
		&\=	\(\(\ad_{-\square^{(x)}}^2+(b_1+b_2)\ad_{-\square^{(x)}}+b_1b_2\)(E_1^{(x)})\)(\hyper{3}{2}(\underline{a};\underline{b};x,y;\alpha))
		\\&=	\sum_\lambda \frac{\poch{\underline{a}}{\lambda}}{\poch{\underline{b}}{\lambda}} \alpha^{|\lambda|}\sum_{\mu\coveredby\lambda}\(\rho^2+(b_1+b_2)\rho+b_1b_2\)\binom{\lambda}{\mu}\Omega_\mu(x)J_\lambda^*(y)
		\\&=	\sum_\mu \frac{\poch{\underline{a}}{\mu}}{\poch{\underline{b}}{\mu}} \alpha^{|\mu|} \sum_{\lambda\cover\mu}  \alpha\(\rho^3+e_1(\underline{a})\rho^2+e_2(\underline{a})\rho+e_3(\underline{a})\)\binom{\lambda}{\mu}\Omega_\mu(x)J_\lambda^*(y)
		\\&=	\(\(\ad_{\square^{(y)}}^3+e_1(\underline{a})\ad_{\square^{(y)}}^2+e_2(\underline{a})\ad_{\square^{(y)}}+e_3(\underline{a})\)(e_{1}(y))\)(\hyper{3}{2}(\underline{a};\underline{b};x,y;\alpha))
	\end{align*}
	Let
	\begin{align*}
		\hyper[\mathcal D]{3}{2}^{(x,y)}
		&=	\(\ad_{-\square^{(x)}}^2+(b_1+b_2)\ad_{-\square^{(x)}}+b_1b_2\)(E_1^{(x)})	\notag
		\\&\=	-\(\ad_{\square^{(y)}}^3+e_1(\underline{a})\ad_{\square^{(y)}}^2+e_2(\underline{a})\ad_{\square^{(y)}}+e_3(\underline{a})\)(e_{1}(y)),
	\end{align*}
	then
	\begin{align*}
		\hyper[\mathcal D]{3}{2}^{(x,y)}(\hyper{3}{2}(x,y))=0.
	\end{align*}
	By comparing the actions on Jack polynomials, our operator $\hyper[\mathcal D]{3}{2}^{(x,y)}$ reduces to Fujikoshi's operator, \cite[(4.8)]{Fujikoshi}, when $\alpha=2$. 
	While Fujikoshi’s operator is presented in a rather intricate and mysterious form, ours is quite straightforward to generalize.
\subsection{The general case}
	Now we consider the general case:
	\begin{align*}
		\hyper{p}{q}(\underline{a};\underline{b};x,y;\alpha) = \sum_\lambda \frac{\poch{\underline{a}}{\lambda}}{\poch{\underline{b}}{\lambda}} \alpha^{|\lambda|} \Omega_\lambda(x)J_\lambda^*(y).
	\end{align*}
	Define the following \mydef{lowering and raising differential operators}
	\begin{align}
		\L^{(x)}=\hyper[\L]{}{q}^{(x)}(\underline{b}) 
		&\coloneqq	\sum_{r=0}^q e_{q-r}(\underline{b})\ad_{-\square^{(x)}}^{r}(E_1^{(x)})
		=\prod_{k=1}^q (\ad_{-\square^{(x)}}+b_k)(E_1^{(x)}),	\label{eqn:lower-x}	\\
		\R^{(y)}=\hyper[\R]{p}{}^{(y)}(\underline{a}) 
		&\coloneqq \sum_{r=0}^p e_{p-r}(\underline{a})\ad_{\square^{(y)}}^{r}(e_1(y))
		=\prod_{k=1}^p (\ad_{\square^{(y)}}+a_k)(e_1(y)).	\label{eqn:raise-y}
	\end{align}
	
	Let $\mathscr F^{(x,y)}$ be the space over $\mathbb Q(\alpha)$ of formal power series in the form 
	\begin{align*}
		F(x,y)=\sum_{}C_{a_1,\dots,a_n,b_1,\dots,b_n}(\alpha) x_1^{a_1}\cdots x_n^{a_n} y_1^{b_1}\cdots y_n^{b_n},
	\end{align*}
	where the sum is over $a_1,\dots,a_n,b_1,\dots,b_n\geq0$ and $C_{a_1,\dots,a_n,b_1,\dots,b_n}(\alpha)\in\mathbb Q(\alpha)$, such that $F(x,y)$ is symmetric in $x$ and in $y$ separately.
	The differential operators $\hyper[\L]{}{q}^{(x)}(\underline{b})$ and $\hyper[\R]{p}{}^{(y)}(\underline{a})$, and the Debiard--Sekiguchi operators $D^{(x)}(t)$ and $D^{(y)}(t)$ are linear maps on the space $\mathscr F^{(x,y)}$. 
	
	Define $\mathscr F^{(x,y)}_{D}$ as the subspace of $\mathscr F^{(x,y)}$ that consists of formal power series in the form
	\begin{align}\label{eqn:Fxy}
		F(x,y) = \sum_{\lambda} C_\lambda(\alpha) \alpha^{|\lambda|}  \frac{J_\lambda(x)J_\lambda(y)}{j_\lambda J_\lambda(\bm1_n)},
	\end{align}
	where $C_\lambda(\alpha)\in\Q(\alpha)$.
	\begin{prop}
		The following are equivalent for $F(x,y)\in\mathscr F^{(x,y)}$: 
		\begin{enumerate}
			\item $F(x,y)\in\mathscr F^{(x,y)}_D$.
			\item $F(x,y)$ is in the kernel of $D^{(x)}(t)-D^{(y)}(t)$.
			\item $F(x,y)$ and $D^{(x)}F(x,y)$ are $\tau$-invariant, where $\tau:\mathscr F^{(x,y)}\to \mathscr F^{(x,y)}$ is the involution of interchanging $x$ and $y$.
		\end{enumerate}
	\end{prop}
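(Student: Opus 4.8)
The plan is to diagonalize. Since each bihomogeneous component of $\mathscr F^{(x,y)}$ is spanned by the products $J_\lambda(x)J_\mu(y)$ with $\lambda,\mu\in\mathcal P_n$, every $F\in\mathscr F^{(x,y)}$ has a unique expansion $F=\sum_{\lambda,\mu}c_{\lambda\mu}J_\lambda(x)J_\mu(y)$ with $c_{\lambda\mu}\in\Q(\alpha)$, and by definition $F\in\mathscr F^{(x,y)}_D$ precisely when this expansion is supported on the diagonal $\lambda=\mu$. I would prove the three statements equivalent by the cycle $(1)\Rightarrow(3)\Rightarrow(2)\Rightarrow(1)$, treating $t$ as a formal parameter so that lying in the kernel of $D^{(x)}(t)-D^{(y)}(t)$ means being annihilated by every coefficient operator $D_r^{(x)}-D_r^{(y)}$.

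The one substantive ingredient, which I would record first as a lemma, is that the eigenvalue polynomial $d_\lambda(t):=\prod_{i=1}^n(\tilde\lambda_i+t)$, where $\tilde\lambda_i=\lambda_i-(i-1)/\alpha$, determines $\lambda$; equivalently $\lambda\mapsto d_\lambda(t)$ is injective into $\Q(\alpha)[t]$. This holds because the roots of $d_\lambda(t)$ are $-\tilde\lambda_1,\dots,-\tilde\lambda_n$, and the strict inequalities $\tilde\lambda_i-\tilde\lambda_{i+1}=(\lambda_i-\lambda_{i+1})+1/\alpha>0$ show these are pairwise distinct and strictly decreasing, so the polynomial recovers the ordered sequence $(\tilde\lambda_i)$ and hence $\lambda$. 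Consequently $d_\lambda(t)-d_\mu(t)$ is a nonzero element of $\Q(\alpha)[t]$ whenever $\lambda\neq\mu$. This is the only arithmetic point; the rest is formal linear algebra.

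For $(2)\Rightarrow(1)$ I would apply $D^{(x)}(t)-D^{(y)}(t)$ to the expansion and use the diagonal action recalled above to get $\sum_{\lambda,\mu}c_{\lambda\mu}\big(d_\lambda(t)-d_\mu(t)\big)J_\lambda(x)J_\mu(y)=0$; linear independence of the $J_\lambda(x)J_\mu(y)$ forces $c_{\lambda\mu}\big(d_\lambda(t)-d_\mu(t)\big)=0$ for all $\lambda,\mu$, and the injectivity lemma then annihilates every off-diagonal coefficient, so $F\in\mathscr F^{(x,y)}_D$. (The reverse direction is equally immediate, since on a diagonal series $D^{(x)}(t)$ and $D^{(y)}(t)$ act by the same scalar $d_\lambda(t)$ term by term.)

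Finally I would connect (3) using that $\tau$ conjugates the $x$-operator into the $y$-operator, $\tau D^{(x)}(t)\tau=D^{(y)}(t)$, which is clear from the determinantal definition. For $(1)\Rightarrow(3)$: a diagonal series is $\tau$-invariant because $J_\lambda(x)J_\lambda(y)$ is symmetric under $x\leftrightarrow y$, and $D^{(x)}(t)$ preserves diagonality, so $D^{(x)}(t)F$ is again $\tau$-invariant. For $(3)\Rightarrow(2)$: from $\tau F=F$ and the conjugation relation, $\tau\big(D^{(x)}(t)F\big)=D^{(y)}(t)F$; since (3) posits that $D^{(x)}(t)F$ is $\tau$-invariant, this equals $D^{(x)}(t)F$, giving $\big(D^{(x)}(t)-D^{(y)}(t)\big)F=0$. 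The main obstacle is not depth but bookkeeping: one must pin down the injectivity lemma cleanly and keep straight the conjugation action of $\tau$; with those in place the proposition is a short diagonalization argument over $\Q(\alpha)$.
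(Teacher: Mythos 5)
Your proposal is correct, and it proves the equivalence by the economical cycle $(1)\Rightarrow(3)\Rightarrow(2)\Rightarrow(1)$; your $(3)\Rightarrow(2)$ step, via the conjugation relation $\tau D^{(x)}(t)=D^{(y)}(t)\tau$, is exactly the paper's argument, and $(1)\Rightarrow(3)$ is the same trivial observation in both. Where you genuinely differ is the key implication $(2)\Rightarrow(1)$. The paper expands one-sidedly, $F(x,y)=\sum_\lambda A_\lambda(x)J_\lambda(y)$ with unknown symmetric series $A_\lambda(x)$, deduces that each $A_\lambda$ satisfies $D^{(x)}(t)A_\lambda=\prod_i(\lambda_i-(i-1)/\alpha+t)\,A_\lambda$, and then cites the known fact that the Debiard--Sekiguchi operators characterize Jack polynomials to conclude $A_\lambda$ is a scalar multiple of $J_\lambda(x)$. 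You instead take the full double expansion $\sum_{\lambda,\mu}c_{\lambda\mu}J_\lambda(x)J_\mu(y)$ and prove the underlying separation-of-eigenvalues statement yourself: the map $\lambda\mapsto d_\lambda(t)=\prod_i(\tilde\lambda_i+t)$ is injective, so every off-diagonal coefficient is killed. Your route is self-contained -- it unpacks precisely the black box the paper invokes, since that characterization rests on the same eigenvalue-separation fact -- at the cost of a little extra bookkeeping; the paper's version is shorter because it delegates. One presentational caveat, which is not a gap: the inequality $\tilde\lambda_i-\tilde\lambda_{i+1}=(\lambda_i-\lambda_{i+1})+1/\alpha>0$ presupposes an ordering on $\Q(\alpha)$ (e.g.\ declare $f>0$ when $f(\alpha_0)>0$ for all sufficiently large real $\alpha_0$, or specialize $\alpha$ to a positive transcendental). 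You can avoid orderings entirely by noting that the coefficient of $1/\alpha$ in $\tilde\lambda_i=\lambda_i-(i-1)/\alpha$ is $-(i-1)$, so the multiset of roots of $d_\lambda(t)$ determines the pairs $(i,\lambda_i)$ and hence $\lambda$; either fix is one line.
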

	\begin{proof}
		(1)$\implies$(2) and (3) is obvious.
		We now prove that (2)$\implies$(1).
		Since $(J_\lambda(y))$ forms a basis, we may write $$F(x,y) = \sum_\lambda A_\lambda(x)J_\lambda(y)\in\mathscr F^{(x,y)}$$ for some symmetric formal power series $A_\lambda(x)$.
		Applying $D^{(x)}(t)-D^{(y)}(t)$, we get 
		\begin{align*}
			\sum_\lambda \(D^{(x)}(t)(A_\lambda(x))-\prod_{i}(\lambda_i-(i-1)/\alpha+t)\cdot A_\lambda(x)\)J_\lambda(y)=0,
		\end{align*}
		 which implies 
		\begin{align*}
			D^{(x)}(t)(A_\lambda(x))=\prod_{i}(\lambda_i-(i-1)/\alpha+t)\cdot A_\lambda(x)
		\end{align*}
		for all $\lambda$.
		By the well-known fact that the Debiard--Sekiguchi operator characterizes Jack polynomials, we conclude that $A_\lambda(x)$ is a scalar multiple of $J_\lambda(x)$.
		
		As for (3)$\implies$(2), we note that $\tau D^{(x)}=D^{(y)}\tau$, and hence
		\begin{align*}
			D^{(x)}F -D^{(y)}F = D^{(x)}F -D^{(y)}\tau F = D^{(x)}F -\tau D^{(x)}F=0.
		\end{align*}
	\end{proof}	
	\begin{maintheorem}\label{thm:A}
		The hypergeometric series $\hyper[F]{p}{q}(\underline{a};\underline{b};x,y;\alpha)$ is the unique solution in $\mathscr F^{(x,y)}_D$ of the equation 
		\begin{align}\label{eqn:LxRy(F)}
			\(\hyper[\L]{}{q}^{(x)}(\underline{b})-\hyper[\R]{p}{}^{(y)}(\underline{a})\) (F(x,y))=0,
		\end{align}
		subject to the initial condition that $F(\bm0_n,\bm0_n)=1$, i.e., $C_{(0)}(\alpha)=1$.
	\end{maintheorem}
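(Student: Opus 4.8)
The plan is to collapse the single operator identity \cref{eqn:LxRy(F)} into an explicit scalar recursion on the coefficients and then solve that recursion by induction on $|\lambda|$. First I would use membership in $\mathscr F^{(x,y)}_D$ to write any candidate solution in diagonal form
\[
F(x,y)=\sum_{\lambda} C_\lambda(\alpha)\,\alpha^{|\lambda|}\,\Omega_\lambda(x)\,J_\lambda^*(y),
\]
using $J_\lambda(x)/J_\lambda(\bm1_n)=\Omega_\lambda(x)$ and $J_\lambda(y)/j_\lambda=J_\lambda^*(y)$, so that the unknowns are the scalars $C_\lambda(\alpha)\in\Q(\alpha)$ and the initial condition $F(\bm0_n,\bm0_n)=1$ reads $C_{(0)}=1$.

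Next I would compute how each operator acts on this basis. Using the factored forms \cref{eqn:lower-x,eqn:raise-y}, the commutator formulas \cref{eqn:square2-E1,eqn:square2-e1}, and the elementary identity $\prod_{k=1}^q(t+b_k)=\sum_{r=0}^q e_{q-r}(\underline b)\,t^r$ (with $t=\ad_{-\square}$, and similarly for $\underline a$), I get
\begin{align*}
\L^{(x)}(\Omega_\lambda(x)) &= \sum_{\mu\coveredby\lambda}\prod_{k=1}^q\(\rho(\lambda/\mu)+b_k\)\binom{\lambda}{\mu}\Omega_\mu(x),\\
\R^{(y)}(J_\mu^*(y)) &= \alpha\sum_{\lambda\cover\mu}\prod_{k=1}^p\(\rho(\lambda/\mu)+a_k\)\binom{\lambda}{\mu}J_\lambda^*(y).
\end{align*}
Since the products $\Omega_\mu(x)J_\lambda^*(y)$ are linearly independent, collecting the coefficient of $\Omega_\mu(x)J_\lambda^*(y)$ for each covering pair $\lambda\cover\mu$ turns \cref{eqn:LxRy(F)} into the family of scalar equations
\[
C_\lambda\,\alpha^{|\lambda|}\prod_{k=1}^q\(\rho(\lambda/\mu)+b_k\)\binom{\lambda}{\mu}
= C_\mu\,\alpha^{|\mu|+1}\prod_{k=1}^p\(\rho(\lambda/\mu)+a_k\)\binom{\lambda}{\mu}.
\]
As $|\lambda|=|\mu|+1$ and $\binom{\lambda}{\mu}\neq0$ for $\lambda\cover\mu$, this cancels to the recursion $C_\lambda\prod_{k=1}^q(\rho(\lambda/\mu)+b_k)=C_\mu\prod_{k=1}^p(\rho(\lambda/\mu)+a_k)$.

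For existence I would check that $C_\lambda=\poch{\underline a}{\lambda}/\poch{\underline b}{\lambda}$ solves this recursion: \cref{eqn:poch-ratio} gives $\poch{\underline a}{\lambda}/\poch{\underline a}{\mu}=\prod_k(\rho(\lambda/\mu)+a_k)$ and $\poch{\underline b}{\lambda}/\poch{\underline b}{\mu}=\prod_k(\rho(\lambda/\mu)+b_k)$, and dividing the two identities yields the recursion immediately; this identifies the solution as $\hyper{p}{q}(x,y)$. For uniqueness I would run the recursion forward in size: every $\lambda\neq(0)$ has a removable corner, hence some $\mu$ with $\lambda\cover\mu$, and since the $b_k$ are indeterminates the factor $\prod_k(\rho(\lambda/\mu)+b_k)$ is a nonzero element of $\Q(\alpha)[\underline b]$, so the recursion determines $C_\lambda$ from $C_\mu$. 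Starting from $C_{(0)}=1$ and inducting on $|\lambda|$ then fixes every coefficient.

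The computation is essentially bookkeeping once \cref{lem} is in hand, so I expect no serious obstacle in this two-alphabet case. The one delicate point is the apparent over-determination of the recursion: when $\lambda$ has several removable corners, distinct corners yield a priori different expressions for $C_\lambda$, so one must know these agree. Their consistency is automatic because the explicit series $\hyper{p}{q}(x,y)$ already satisfies the recursion, and for the uniqueness argument only a single corner per $\lambda$ is used. The genuinely favorable structural feature---and the reason this case is clean---is that the recursion runs in the correct direction, increasing $|\lambda|$, so that the single normalization $C_{(0)}=1$ determines the whole solution; this is precisely what fails for the one-alphabet operator $\mathcal B$, whose recursion points the wrong way.
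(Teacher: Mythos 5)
Your proposal is correct and follows essentially the same route as the paper's own proof: expand the candidate solution in the diagonal basis, apply \cref{eqn:square2-E1,eqn:square2-e1} through the factored forms \cref{eqn:lower-x,eqn:raise-y}, compare coefficients of $\Omega_\mu(x)J_\lambda^*(y)$ over covering pairs to obtain the recursion $C_\lambda\prod_k(\rho(\lambda/\mu)+b_k)=C_\mu\prod_k(\rho(\lambda/\mu)+a_k)$, and conclude via \cref{eqn:poch-ratio} and induction on $|\lambda|$. Your explicit remarks on the non-vanishing of the $b_k$-product and on consistency across multiple removable corners are points the paper leaves implicit, but they are refinements of the same argument rather than a different approach.
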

	\begin{proof}
		We first prove that the uniqueness. 
		The proof is based on \cite{CM72}.
		Let \cref{eqn:Fxy} be a solution of \cref{eqn:LxRy(F)}, then
		\begin{align*}
			\hyper[\R]{p}{}^{(y)}(\underline{a})(F(x,y)) &= \sum_{\lambda} C_\lambda(\alpha) \alpha^{|\lambda|} \sum_{\mu\coveredby\lambda}  \prod_{k=1}^q \(\rho(\lambda/\mu)+b_k\) \Omega_\mu(x)J_\lambda^*(y),	
			\intertext{and}
			\hyper[\L]{}{q}^{(x)}(\underline{b})(F(x,y)) &= \sum_{\mu} C_\mu(\alpha) \alpha^{|\mu|} \sum_{\lambda\cover\mu} \alpha\prod_{k=1}^p \(\rho(\lambda/\mu)+a_k\) \Omega_\mu(x)J_\lambda^*(y)	\\
			&=	\sum_{\lambda} \alpha^{|\lambda|} \sum_{\mu\coveredby\lambda} C_\mu(\alpha)  \prod_{k=1}^p \(\rho(\lambda/\mu)+a_k\) \Omega_\mu(x)J_\lambda^*(y),
		\end{align*}
		hence comparing the coefficients of $\Omega_\mu(x)J_\lambda^*(y)$ for each pair $\lambda\cover\mu$, we have
		\begin{align}\label{eqn:C-rec}
			C_\lambda(\alpha) \prod_{k=1}^q \(\rho(\lambda/\mu)+b_k\)
			&=	C_\mu(\alpha) \prod_{k=1}^p \(\rho(\lambda/\mu)+a_k\).
		\end{align}
		The recursion \cref{eqn:C-rec}, together with the initial condition, determines $(C_\lambda(\alpha))$ uniquely.
		
		By \cref{eqn:poch-ratio}, it is obvious that $C_\lambda(\alpha)=\frac{\poch{\underline{a}}{\lambda}}{\poch{\underline{b}}{\lambda}}$ satisfies the recursion \cref{eqn:C-rec}, hence $\hyper{p}{q}(\underline{a};\underline{b};x,y;\alpha)$ is the unique solution.
	\end{proof}
	
\subsection{Interchanging the variables}
	By the symmetry under interchange of $x$ and $y$ in $\hyper{p}{q}(\underline{a};\underline{b};x,y;\alpha)$ (see \cref{eqn:sym-xy}), we may interchange the $x$ and $y$ variables in the operators as follows:
	\begin{align}
		\hyper[\R]{p}{}^{(x)}(\underline{a}) 
		&\coloneqq \sum_{r=0}^p e_{p-r}(\underline{a})\ad_{\square^{(x)}}^{r}(e_1(x)) 
		= \prod_{k=1}^p (\ad_{\square^{(x)}}+a_k)(e_1(x))	,	\label{eqn:raise-x}\\
		\hyper[\L]{}{q}^{(y)}(\underline{b}) 
		&\coloneqq \sum_{r=0}^q e_{q-r}(\underline{b})\ad_{-\square^{(y)}}^{r}(E_1^{(y)})
		=\prod_{k=1}^q (\ad_{-\square^{(y)}}+b_k)(E_1^{(y)})	\label{eqn:lower-y}.
	\end{align}
	Following the reasoning in \cref{thm:A}, we derive
	\begin{namedtheorem*}{Theorem~A$'$}\label{thm:A'}
		\cref{thm:A} holds for the differential operator $\hyper[\L]{}{q}^{(y)}(\underline{b})-\hyper[\R]{p}{}^{(x)}(\underline{a})$.
	\end{namedtheorem*}
\section{One alphabet}\label{sec:1arg}
	In this section, we consider hypergeometric series with one alphabet $\hyper{p}{q}(\underline{a};\underline{b};x;\alpha)$.
	\subsection{Examples}\label{sec:1arg-1}
	We begin with some lemmas and examples in \cite[p.~47--51]{Mac-HG}. 
	These are not needed for \cref{thm:B}, but will serve as good illustration.
	\begin{lemma}\label{lem:square-p1}
		The operators $E_1,~E_2,~\square$, $[E_1,\square]$ and $[[E_1,\square],\square]$ act on $\exp(p_1)=\hyper{0}{0}(x;\alpha)$ by
		\begin{align*}
			E_1(\exp(p_1)) &= n\exp(p_1),	\\
			E_2(\exp(p_1)) &= p_1\exp(p_1),	\\
			\square(\exp(p_1)) &= \frac{1}{2}p_2\exp(p_1),	
		\end{align*}
		and
		\begin{align*}
			[E_1,\square](\exp(p_1)) &= p_1\exp(p_1) =E_2(\exp(p_1)),	\\
			[[E_1,\square],\square](\exp(p_1)) &=\(\(1+\frac{n-1}{\alpha}\)p_1+p_2\)\exp(p_1)= \(\(1+\frac{n-1}{\alpha}\)E_2+2\square\)\exp(p_1).
		\end{align*}
	\end{lemma}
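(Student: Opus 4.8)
The plan is to exploit the factorization $\exp(p_1)=\prod_{i=1}^n\exp(x_i)$, which makes every partial derivative trivial: $\partial_i^k\exp(p_1)=\exp(p_1)$ for all $i$ and all $k\geq0$. The first three identities then follow by direct substitution. For $E_1$ and $E_2$ this is immediate, yielding $n\exp(p_1)$ and $p_1\exp(p_1)$. For $\square$, the second-order part contributes $\frac{1}{2}\sum_i x_i^2\exp(p_1)=\frac{1}{2}p_2\exp(p_1)$, while the first-order off-diagonal part produces the prefactor $\frac{1}{\alpha}\sum_{i\neq j}\frac{x_ix_j}{x_i-x_j}$, which vanishes because the summand is antisymmetric under $i\leftrightarrow j$. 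This vanishing of the off-diagonal sum is the recurring mechanism in what follows.

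For the two commutators I would avoid multiplying out the operators as differential operators and instead use $[A,B](f)=A(B(f))-B(A(f))$ together with the actions already computed. For $[E_1,\square]$, write $[E_1,\square](\exp(p_1))=E_1\!\left(\frac{1}{2}p_2\exp(p_1)\right)-\square(n\exp(p_1))$. The second term equals $\frac{n}{2}p_2\exp(p_1)$ by the third identity. The first reduces to $\frac{1}{2}\sum_i\partial_i(p_2\exp(p_1))=\frac{1}{2}(2p_1+np_2)\exp(p_1)$; the $p_2$-terms cancel, leaving $p_1\exp(p_1)=E_2(\exp(p_1))$, as claimed.

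The iterated commutator $[[E_1,\square],\square]$ is where the genuine computation lies. Here I would use the explicit form $\square_1\coloneqq[E_1,\square]=\sum_i x_i\partial_i^2+\frac{1}{\alpha}\sum_{i\neq j}\frac{x_i+x_j}{x_i-x_j}\partial_i$ from \cref{ex:ad}, and evaluate $[\square_1,\square](\exp(p_1))=\square_1\!\left(\frac{1}{2}p_2\exp(p_1)\right)-\square(p_1\exp(p_1))$ using the two intermediate actions from the previous steps. Applying $\square_1$ to $p_2\exp(p_1)$ and $\square$ to $p_1\exp(p_1)$ is routine, except that the off-diagonal part of $\square_1$ produces the sum $\frac{2}{\alpha}\sum_{i\neq j}\frac{x_i(x_i+x_j)}{x_i-x_j}$; the off-diagonal part of $\square$ again vanishes by antisymmetry.

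The main obstacle is therefore the single rational-function identity
\[
	\sum_{i\neq j}\frac{x_i(x_i+x_j)}{x_i-x_j}=(n-1)\,p_1,
\]
which is responsible for the coefficient $\frac{n-1}{\alpha}$. I would prove it by pairing the $(i,j)$ and $(j,i)$ terms: their sum collapses to $\frac{(x_i+x_j)(x_i-x_j)}{x_i-x_j}=x_i+x_j$, so the total equals $\sum_{i<j}(x_i+x_j)=(n-1)p_1$, since each variable lies in exactly $n-1$ unordered pairs. Assembling the pieces yields $\left(\left(1+\frac{n-1}{\alpha}\right)p_1+p_2\right)\exp(p_1)$, and comparing with the first three identities rewrites this as $\left(\left(1+\frac{n-1}{\alpha}\right)E_2+2\square\right)\exp(p_1)$, completing the lemma.
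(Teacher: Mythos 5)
Your computation is correct and is exactly the ``direct computations'' the paper invokes (its proof of this lemma consists of those two words): the product structure $\exp(p_1)=\prod_i e^{x_i}$, the antisymmetry cancellation of $\sum_{i\neq j}\frac{x_ix_j}{x_i-x_j}$, and the identity $\sum_{i\neq j}\frac{x_i(x_i+x_j)}{x_i-x_j}=(n-1)p_1$ assemble precisely to the stated eigen-relations, and your use of the explicit form of $\square_1=[E_1,\square]$ from \cref{ex:ad} to evaluate the nested commutator is a clean way to organize it. No gaps.
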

	\begin{proof}
		Direct computations.
	\end{proof}
	\begin{lemma}\label{lem:rho-bino}
		For fixed $\mu$, we have
		\begin{align*}
			\alpha\sum_{\lambda\cover\mu} \binom{\lambda}{\mu} \frac{J_\lambda(\bm1_n)}{j_\lambda}\frac{j_\mu}{J_\mu(\bm1_n)} &= n,	\\
			\alpha\sum_{\lambda\cover\mu} \rho(\lambda/\mu)\binom{\lambda}{\mu} \frac{J_\lambda(\bm1_n)}{j_\lambda}\frac{j_\mu}{J_\mu(\bm1_n)} &= |\mu|,	\\
			\alpha\sum_{\lambda\cover\mu} \rho(\lambda/\mu)^2\binom{\lambda}{\mu} \frac{J_\lambda(\bm1_n)}{j_\lambda}\frac{j_\mu}{J_\mu(\bm1_n)} &= \(1+(n-1)/\alpha\)|\mu|+2\rho(\mu).
		\end{align*}
	\end{lemma}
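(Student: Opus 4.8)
The plan is to collapse all three identities to a single kind of evaluation at $\bm1_n$. Since $\Omega_\mu=\tfrac{j_\mu}{J_\mu(\bm1_n)}J_\mu^*$ and $J_\lambda^*(\bm1_n)=J_\lambda(\bm1_n)/j_\lambda$, linearity of $\ad_{\square}^{r}(e_1)$ together with the commutator formula \cref{eqn:square2-e1} gives, for every $r\geq0$,
\begin{align*}
\ad_{\square}^{r}(e_1)(\Omega_\mu)(\bm1_n)
=\frac{j_\mu}{J_\mu(\bm1_n)}\,\bigl(\ad_{\square}^{r}(e_1)(J_\mu^*)\bigr)(\bm1_n)
=\alpha\sum_{\lambda\cover\mu}\rho(\lambda/\mu)^{r}\binom{\lambda}{\mu}\frac{J_\lambda(\bm1_n)}{j_\lambda}\frac{j_\mu}{J_\mu(\bm1_n)}.
\end{align*}
Thus the three displayed sums are exactly the cases $r=0,1,2$, and it suffices to evaluate the operators $\ad_{\square}^{r}(e_1)$ on $\Omega_\mu$ at $\bm1_n$, with target values $n$, $|\mu|$, and $(1+(n-1)/\alpha)|\mu|+2\rho(\mu)$.

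The cases $r=0,1$ are immediate. For $r=0$ the operator is multiplication by $e_1$, so the value is $e_1(\bm1_n)\Omega_\mu(\bm1_n)=n$. For $r=1$ we use $\ad_{\square}(e_1)=E_3=\sum_i x_i^2\partial_i$ from \cref{ex:ad}; at $\bm1_n$ the factors $x_i^2$ become $1$, so $E_3(\Omega_\mu)(\bm1_n)=\sum_i\partial_i\Omega_\mu(\bm1_n)=E_2(\Omega_\mu)(\bm1_n)=|\mu|$, the last step because $\Omega_\mu$ is a scalar multiple of $J_\mu$ and \cref{eqn:E2} gives $E_2(\Omega_\mu)=|\mu|\Omega_\mu$ with $\Omega_\mu(\bm1_n)=1$.

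The case $r=2$ is the crux. Here I would apply the explicit expression for $\ad_{\square}^{2}(e_1)$ from \cref{ex:ad} directly to $\Omega_\mu$ and evaluate at $\bm1_n$. Writing $d_1=\partial_i\Omega_\mu(\bm1_n)$, $d_2=\partial_i^2\Omega_\mu(\bm1_n)$, $d_{11}=\partial_i\partial_j\Omega_\mu(\bm1_n)$ ($i\neq j$)—all index-independent by symmetry—the regular part $\sum_i(x_i^3\partial_i^2+x_i^2\partial_i)$ contributes $n d_2+n d_1$. For the singular part I would pair the indices $(i,j)$ and $(j,i)$, obtaining $\tfrac{2}{\alpha}\sum_{\{i,j\}}\tfrac{x_ix_j\,(x_i\partial_i-x_j\partial_j)\Omega_\mu}{x_i-x_j}$; since the numerator vanishes at $\bm1_n$ by symmetry, a first-order expansion yields the limit $d_1+d_2-d_{11}$ for each of the $\binom{n}{2}$ pairs, so this part contributes $\tfrac{n(n-1)}{\alpha}(d_1+d_2-d_{11})$. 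To eliminate $d_1,d_2,d_{11}$ I would assemble three relations: $n d_1=|\mu|$ as in the $r=1$ case; applying $E_1=\sum_i\partial_i$ twice via \cref{eqn:E1} and evaluating gives $n d_2+n(n-1)d_{11}=|\mu|(|\mu|-1)$, using $\sum_{\nu\coveredby\mu}\binom{\mu}{\nu}=|\mu|$, which is read off from \cref{eqn:bino} at $\bm1_n$ by homogeneity of $\Omega_\mu$; and the same singular-limit evaluation applied to $\square$ itself gives $\tfrac{n}{2}d_2+\tfrac{n(n-1)}{2\alpha}(d_2-d_{11})=\square(\Omega_\mu)(\bm1_n)=\rho(\mu)$ by \cref{eqn:square2}. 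The last relation rearranges to $(1+\tfrac{n-1}{\alpha})(n d_2)-\tfrac{1}{\alpha}\,n(n-1)d_{11}=2\rho(\mu)$, and substituting this together with $n d_1=|\mu|$ into $n d_1+n d_2+\tfrac{n(n-1)}{\alpha}(d_1+d_2-d_{11})$ makes the second-derivative terms collapse precisely to $2\rho(\mu)$, leaving $(1+\tfrac{n-1}{\alpha})|\mu|+2\rho(\mu)$.

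The main obstacle is the careful treatment of the singular coefficients $\tfrac{1}{x_i-x_j}$ at $x=\bm1_n$ in the $r=2$ step: one must justify that the difference quotients have genuine limits. This is safe because $\square$ and multiplication by $e_1$ preserve symmetric polynomials, so both $\ad_{\square}^{2}(e_1)(\Omega_\mu)$ and $\square(\Omega_\mu)$ are honest symmetric polynomials with no actual pole, and the symmetry of $\Omega_\mu$ forces the numerators $x_i\partial_i\Omega_\mu-x_j\partial_j\Omega_\mu$ and $\partial_i\Omega_\mu-\partial_j\Omega_\mu$ to vanish to first order along $x_i=x_j$, so the L'Hôpital limits exist and are path-independent.
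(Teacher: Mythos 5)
Your proof is correct, but it takes a genuinely different route from the paper's. The paper proves \cref{lem:rho-bino} by applying the lowering commutators $E_1$, $[E_1,\square]$, $[[E_1,\square],\square]$ to the generating function $\exp(p_1)=\hyper{0}{0}(x;\alpha)=\sum_\lambda \alpha^{|\lambda|}J_\lambda^*$ and comparing coefficients of $J_\mu^*$ via \cref{eqn:square2-E1}; the entire analytic content is concentrated in \cref{lem:square-p1}, the direct computation of those commutators on $\exp(p_1)$. You instead work on the raising side: you recognize the three sums as the evaluations $\bigl(\ad_{\square}^{r}(e_1)(\Omega_\mu)\bigr)(\bm1_n)$ for $r=0,1,2$ via \cref{eqn:square2-e1}, and then compute these evaluations from the explicit operator formulas of \cref{ex:ad}, resolving the $1/(x_i-x_j)$ singularities by antisymmetry (the paired numerators are antisymmetric in $x_i,x_j$, hence divisible by $x_i-x_j$, so the quotients are honest polynomials) and eliminating the derivative data $d_1,d_2,d_{11}$ with the diagonal relations $E_2(\Omega_\mu)=|\mu|\,\Omega_\mu$ and $\square(\Omega_\mu)=\rho(\mu)\,\Omega_\mu$. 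The two arguments are dual: coefficient extraction against $\exp(p_1)$ (paper) versus pointwise evaluation at $\bm1_n$ (yours). The paper's route never meets a singular limit, since every derivative of $\exp(p_1)$ is immediate, and it handles all $\mu$ at once; your route bypasses \cref{lem:square-p1} entirely, and your opening identity---that the eigenvalue $g_{r,n}(\mu)$ of \cref{eqn:grn-1} equals $\bigl(\ad_{\square}^{r}(e_1)(\Omega_\mu)\bigr)(\bm1_n)$ for every $r$---is a uniform statement that the paper only recovers later through the generating-function machinery of \cref{sec:1-arg-2}. Two minor points: your middle elimination relation $nd_2+n(n-1)d_{11}=|\mu|(|\mu|-1)$ (from $E_1^2$) is never actually used, since your final substitution needs only $nd_1=|\mu|$ and the rearranged $\square$-relation; and the formula for $\ad_{\square}^2(e_1)$ that you take from \cref{ex:ad} is stated there without proof, so strictly speaking your argument inherits that (routine) verification, just as the paper's inherits the direct computations of \cref{lem:square-p1}.
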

	\begin{proof}
		Apply the operators $E_1$, $[E_1,\square]$ and $[[E_1,\square],\square]$ to $\exp(p_1)=\hyper{0}{0}(x;\alpha)$ and compare the coefficients.
	\end{proof}
	
	Now consider 
	\begin{align*}
		\hyper{2}{1}(a,b;c;x;\alpha)	= \sum_\lambda \frac{\poch{a}{\lambda}\poch{b}{\lambda}}{\poch{c}{\lambda}} \alpha^{|\lambda|}J_\lambda^*(x).
	\end{align*}
	By \cref{lem}, we have
	\begin{align*}
		\([E_1,\square]+cE_1\)\Omega_\lambda 
		=	\sum_{\mu\coveredby\lambda} \(\rho(\lambda/\mu)+c\)\binom{\lambda}{\mu}\Omega_\mu,
	\end{align*}
	it follows that 
	\begin{align*}
		&\=	\([E_1,\square]+cE_1\)(\hyper{2}{1}(a,b;c;x;\alpha))
		\\&=	\sum_\lambda \frac{\poch{a}{\lambda}\poch{b}{\lambda}}{\poch{c}{\lambda}} \alpha^{|\lambda|}\frac{J_\lambda(\bm1_n)}{j_\lambda} \sum_{\mu\coveredby\lambda}\((\rho(\lambda/\mu)+c)\)\binom{\lambda}{\mu}\Omega_\mu(x)
		\\&=	\sum_\lambda \sum_{\mu\coveredby\lambda} \frac{\poch{a}{\mu}\poch{b}{\mu}}{\poch{c}{\mu}} \alpha^{|\lambda|} \(\rho(\lambda/\mu)+a\)\(\rho(\lambda/\mu)+b\) \binom{\lambda}{\mu} \frac{J_\lambda(\bm1_n)}{j_\lambda} \Omega_\mu(x)
		\\&=	\sum_\mu \frac{\poch{a}{\mu}\poch{b}{\mu}}{\poch{c}{\mu}} \alpha^{|\mu|}  \(\alpha \sum_{\lambda\cover\mu} \(\rho(\lambda/\mu)+a\) \(\rho(\lambda/\mu)+b\) \binom{\lambda}{\mu}\frac{J_\lambda(\bm1_n)}{j_\lambda}\frac{j_\mu}{J_\mu(\bm1_n)}\)J_\mu^*(x).
	\end{align*}
	By \cref{lem:rho-bino},
	\begin{align*}
		&\=	\alpha \sum_{\lambda\cover\mu} \(\rho(\lambda/\mu)+a\) \(\rho(\lambda/\mu)+b\) \binom{\lambda}{\mu} \frac{J_\lambda(\bm1_n)}{j_\lambda} \frac{j_\mu}{J_\mu(\bm1_n)}
		\\&=	\alpha \sum_{\lambda\cover\mu} \(\rho(\lambda/\mu)^2+(a+b)\rho(\lambda/\mu)+ab\) \binom{\lambda}{\mu} \frac{J_\lambda(\bm1_n)}{j_\lambda} \frac{j_\mu}{J_\mu(\bm1_n)}
		\\&=	\(1+(n-1)/\alpha\)|\mu|+2\rho(\mu) +(a+b)\cdot|\mu| +ab\cdot n
		\\&=	2\rho(\mu)+\(a+b+1+(n-1)/\alpha\)|\mu|+abn
	\end{align*}
	and therefore
	\begin{align*}
		&\=	\([E_1,\square]+cE_1\)(\hyper{2}{1}(a,b;c;x;\alpha))
		\\&=	\sum_\mu  \frac{\poch{a}{\mu}\poch{b}{\mu}}{\poch{c}{\mu}}\alpha^{|\mu|} \(2\rho(\mu)+\(a+b+1+(n-1)/\alpha\)|\mu|+abn\)  J_\mu^*(x)
		\\&=	\(2\square+\(a+b+1+\frac{n-1}{\alpha}\)E_2+abn\)(\hyper{2}{1}(a,b;c;x)).
	\end{align*} 
	Let 
	\begin{align*}
		\hyper[\mathcal D]{2}{1}^{(x)}	
		&=	\([E_1,\square]+cE_1\) - \(2\square+\(a+b+1+\frac{n-1}{\alpha}\)E_2+abn\)
		\\&=	\sum_i \(x_i(1-x_i)\partial_{i}^2+\(c-(a+b+1)x_i\)\partial_{i}-ab\)
		+\frac{1}{\alpha}\sum_{i\neq j} \frac{(1-x_i)(x_i+x_j)}{x_i-x_j}\partial_{i}.
	\end{align*}
	Then
	\begin{align*}
		\hyper[\mathcal D]{2}{1}^{(x)} (\hyper{2}{1}(x))=0.
	\end{align*}
	This operator coincides with the operator $\hyper[\Phi]{2}{1}$ in \cite[p.~50]{Mac-HG}, where operators for $\hyper{1}{1}$ and $\hyper{0}{1}$ were also given.
	Our $\hyper[\mathcal D]{2}{1}^{(x)}$ also coincides with the operator in \cite[Theorem 2.11]{Yan92} (whose $d$ is our $2/\alpha$).
	
\subsection{The general case: the lowering operator}\label{sec:1-arg-2}
	The pattern in the example above indicate that to study the general case $\hyper{p}{q}$, we need to generalize \cref{lem:square-p1,lem:rho-bino}.
	
	For any $r\geq0$, let
	\begin{align}
		f_r(\mu) &\coloneqq \alpha\sum_{\lambda\cover\mu} \rho(\lambda/\mu)^r \binom{\lambda}{\mu}\frac{j_\mu}{j_\lambda},	\label{eqn:fr-1}\\
		g_{r,n}(\mu) &\coloneqq \alpha\sum_{\lambda\cover\mu}\rho(\lambda/\mu)^r \binom{\lambda}{\mu}\frac{J_\lambda(\bm1_n)}{j_\lambda} \frac{j_\mu} {J_\mu(\bm1_n)}. \label{eqn:grn-1}
	\end{align}
	In \cref{lem:square-p1,lem:rho-bino}, Macdonald constructed eigen-operators $\mathcal G_{r,n}$ acting on $(J_\mu)$ with prescribed eigenvalues $g_{r,n}(\mu)$ for $r\leq2$. 
	His construction relied on iterated commutators of $E_1$ and $\square$ acting on $\hyper{0}{0}$, together with combinations of the first two Debiard--Sekiguchi operators $E_2$ and $\square$. 
	To go further, we experimented with higher commutators and found that the higher Debiard--Sekiguchi operators are also required. 
	For $r=3$ this approach still succeeds, although the computations are quite involved and the outcome does not suggest a general pattern.
	
	We therefore turn to a generating-function approach, which will allow us to construct all $\mathcal G_{r,n}$ for $r\ge0$ in a unified way.
	
\subsubsection{The function $g_{r,n}(\mu)$}
	We first investigate the eigenvalue $g_{r,n}(\mu)$.
	Since 
	\begin{align*}
		\frac{J_\lambda(\bm1_n)}{J_\mu(\bm1_n)} 
		= \frac{\alpha^{|\lambda|}\poch{n/\alpha}{\lambda}}{\alpha^{|\mu|}\poch{n/\alpha}{\mu}}
		= \alpha\cdot (n/\alpha+\rho(\lambda/\mu))
		= n+\alpha\rho(\lambda/\mu),
	\end{align*}
	we have
	\begin{align}\label{eqn:gf-rec}
		g_{r,n} = \alpha f_{r+1}+nf_{r},
	\end{align}
	
	Define $\phi_{\lambda/\mu}$ as the Pieri coefficient for the integral form:
	\begin{align}
		e_1 \cdot J_\mu = \sum_\lambda \phi_{\lambda/\mu} J_\lambda,
	\end{align}
	then by \cref{eqn:e1}, we have
	\begin{align}
		\phi_{\lambda/\mu} =\alpha\binom{\lambda}{\mu}\frac{j_\mu}{j_\lambda},\quad \lambda\cover\mu,
	\end{align}
	and thus
	\begin{align}
		f_r(\mu) = \sum_{\lambda\cover\mu} \phi_{\lambda/\mu} \rho(\lambda/\mu)^r.
	\end{align}
	In this subsection only, let 
	\begin{align}
		w_i=w_i(\mu)\coloneqq\mu_i-(i-1)/\alpha,
	\end{align}
	then $w_i=\rho((\mu+\varepsilon_{i})/\mu)$, if $\mu+\varepsilon_i$ is a partition, where $\varepsilon_i$ is the $i$-th unit vector $(0,\dots,0,1,0,\dots,0)$.
	\begin{lemma}\label{lem:phi}
		Let $\lambda=\mu+\varepsilon_{i_0}$ be a partition for some $i_0=1,\dots,n$, then
		\begin{align}
			\phi_{\lambda/\mu} 
			&= \frac{1/\alpha}{w_{i_0}+n/\alpha} \prod_{\substack{i=1\\i\neq i_0}}^n \frac{w_{i_0}-w_i+1/\alpha}{w_{i_0}-w_i} 
			=	\frac{1}{\mu_{i_0}\alpha+n-i_0+1} \prod_{\substack{i=1\\i\neq i_0}}^n \frac{(\mu_{i_0}-\mu_i)\alpha+i-i_0+1}{(\mu_{i_0}-\mu_i)\alpha+i-i_0}.
		\end{align}
	\end{lemma}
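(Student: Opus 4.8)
The plan is to start from the relation $\phi_{\lambda/\mu}=\alpha\binom{\lambda}{\mu}\,j_\mu/j_\lambda$ (valid for $\lambda\cover\mu$), combine it with the hook factorizations $j_\mu=c_\mu c_\mu'$, $j_\lambda=c_\lambda c_\lambda'$ and the adjacent binomial formula of \cref{fact:bino}, and collapse everything to a product over the row $R$ and column $C$ of the added box. Write $s_0=(i_0,\mu_{i_0}+1)$ for the added box. Passing from $\mu$ to $\lambda$ changes the arm only for boxes in $R$ and the leg only for boxes in $C$, so $c_\mu,c_\mu',c_\lambda,c_\lambda'$ agree outside $R\cup C\cup\{s_0\}$; moreover $a_\lambda(s_0)=l_\lambda(s_0)=0$ gives $c_\lambda(s_0)=1$ and $c_\lambda'(s_0)=\alpha$. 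Thus the overall factor $\alpha$ cancels against $c_\lambda'(s_0)$, and cancelling the \cref{fact:bino} ratios against the $j_\mu/j_\lambda$ ratios box by box should yield the clean intermediate identity
\begin{align*}
\phi_{\lambda/\mu} = \prod_{s\in C}\frac{c_\mu'(s)}{c_\lambda'(s)}\prod_{s\in R}\frac{c_\mu(s)}{c_\lambda(s)}.
\end{align*}
This reduction is routine box-by-box bookkeeping.

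The column product is then essentially immediate. For $s=(i,\mu_{i_0}+1)\in C$ the arm is unchanged and the leg rises by one, so $c_\lambda'(s)=c_\mu'(s)+1$ with $c_\mu'(s)=(\mu_i-\mu_{i_0})\alpha+(\mu')_{\mu_{i_0}+1}-i$. The key observation is that, since $\lambda=\mu+\varepsilon_{i_0}$ is a partition, the rows of height exceeding $\mu_{i_0}$ are exactly $1,\dots,i_0-1$, whence $(\mu')_{\mu_{i_0}+1}=i_0-1$. Substituting this, each column factor becomes $\big((\mu_i-\mu_{i_0})\alpha+i_0-i-1\big)/\big((\mu_i-\mu_{i_0})\alpha+i_0-i\big)$, which after clearing $\alpha$ equals $(w_{i_0}-w_i+1/\alpha)/(w_{i_0}-w_i)$. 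Hence the column product matches the $i<i_0$ factors of the claimed formula term by term, with no telescoping required.

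The row product is the main work and the principal obstacle. For $s=(i_0,j)\in R$ the leg is unchanged and the arm rises by one, so $c_\lambda(s)=c_\mu(s)+\alpha$ and each factor has the shape $A(j)/\big(A(j)+\alpha\big)$ with $A(j)=(\mu_{i_0}-j)\alpha+(\mu')_j-i_0+1$. Because the conjugate part $(\mu')_j$ varies with $j$, the product does not simplify term by term; instead one checks that $A(j)+\alpha=A(j-1)$ precisely when columns $j-1$ and $j$ have equal height, i.e.\ when $(\mu')_{j-1}=(\mu')_j$. The product therefore telescopes within each maximal block of columns of constant height, and the surviving factors sit at the corners of $\mu$, at the indices $j=\mu_i$. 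The hard part is the bookkeeping of this cancellation: one must show that after re-indexing the corners by the rows $i>i_0$ the surviving factors reassemble into $\prod_{i>i_0}(w_{i_0}-w_i+1/\alpha)/(w_{i_0}-w_i)$, while the tail of empty rows ($\mu_i=0$) telescopes separately and, together with the final boundary term, produces the prefactor $(1/\alpha)/(w_{i_0}+n/\alpha)=1/(\mu_{i_0}\alpha+n-i_0+1)$.

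A useful guide here is that $\phi_{\lambda/\mu}$ is manifestly independent of $n$, being built from hook lengths and the $n$-independent coefficient $\binom{\lambda}{\mu}$; the $n$-dependent right-hand side of the lemma must therefore be $n$-independent, and indeed the empty-row tail of the product cancels against the prefactor. I would exploit this to pin down the correct grouping of the telescoped terms, and I would sanity-check the final assembly on the small cases $\mu=(0)$, $\mu=(1)$, and $\mu=(2,1)$ before committing to the general corner bookkeeping.
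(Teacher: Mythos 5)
Your proposal is correct and completable, and it shares the paper's skeleton while differing in the one substantive computation. Like the paper, you reduce via $\phi_{\lambda/\mu}=\alpha\binom{\lambda}{\mu}j_\mu/j_\lambda$ and \cref{fact:bino} to the intermediate identity $\phi_{\lambda/\mu}=\prod_{s\in R}\frac{c_\mu(s)}{c_\lambda(s)}\prod_{s\in C}\frac{c_\mu'(s)}{c_\lambda'(s)}$, and your column computation (using $\mu'_{\mu_{i_0}+1}=i_0-1$) is identical to the paper's. The difference is the row product. The paper disposes of it in one stroke by citing the closed-form hook product
\begin{align*}
\prod_{s\in \textup{row }i} c_\lambda(s) = \alpha^{\lambda_i}\cdot \poch{(n-i+1)/\alpha}{\lambda_i} \cdot \prod_{j=i+1}^n \frac{\poch{(j-i)/\alpha}{\lambda_i-\lambda_j}}{\poch{(j-i+1)/\alpha}{\lambda_i-\lambda_j}},
\end{align*}
so that the ratio of the $\mu$-row to the $\lambda$-row collapses via $\poch{a}{m+1}/\poch{a}{m}=a+m$; your telescoping of $\prod_j A(j)/(A(j)+\alpha)$ over blocks of constant column height is, in effect, a self-contained re-derivation of (the ratio form of) this formula, which the paper does not prove. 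Your mechanism is sound: $A(j-1)=A(j)+\alpha$ exactly when $\mu'_{j-1}=\mu'_j$, each block $(\mu_{h+1},\mu_h]$ contributes $\big(w_{i_0}-w_h+1/\alpha\big)/\big(w_{i_0}-w_{h+1}\big)$ over the descents $h\geq i_0$ of $\mu$, and the deferred bookkeeping does close up: the target product $\prod_{i=i_0+1}^{n}\frac{w_{i_0}-w_i+1/\alpha}{w_{i_0}-w_i}$ telescopes along runs of equal $\mu_i$ in exactly the same way (including the run of empty rows, whose product $\frac{w_{i_0}+n/\alpha}{w_{i_0}+\ell(\mu)/\alpha}$ absorbs the $n$-dependence of the prefactor), after which the two sides have the same multiset of numerators and denominators. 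One small correction of emphasis: the empty-row telescoping happens on the right-hand side of the lemma, not inside the row product itself, which only sees columns $1,\dots,\mu_{i_0}$; your own $n$-independence remark already points at this. Net comparison: the paper's route is shorter but leans on an unproved product formula; yours is more elementary and makes the cancellation structure (and the $n$-independence of $\phi_{\lambda/\mu}$) transparent, at the cost of the run-versus-corner matching argument.
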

	\begin{proof}
		Let $R$ and $C$ be as in \cref{fact:bino}, then we have (noting that $c_\lambda(i_0,\mu_{i_0}+1)=1$ and $c_\lambda'(i_0,\mu_{i_0}+1)=\alpha$)
		\begin{align*}
			\phi_{\lambda/\mu} = 
			\prod_{s\in R}\frac{c_\mu(s)}{c_\lambda(s)} \cdot \prod_{s\in C} \frac{c_\mu'(s)}{c_\lambda'(s)}.
		\end{align*}
		Since 
		\begin{align}
			\prod_{s\in \textup{row }i} c_\lambda(s) = \alpha^{\lambda_i}\cdot \poch{(n-i+1)/\alpha}{\lambda_i} \cdot \prod_{j=i+1}^n \frac{\poch{(j-i)/\alpha}{\lambda_i-\lambda_j}}{\poch{(j-i+1)/\alpha}{\lambda_i-\lambda_j}},
		\end{align}
		we have
		\begin{align*}
			\prod_{s\in R}\frac{c_\mu(s)}{c_\lambda(s)}
			&=	\frac{\prod_{s\in\text{row $i_0$ of $\mu$}}c_\mu(s)}{\prod_{s\in\text{row $i_0$ of $\lambda$}}c_\lambda(s)}
			=	\frac{1/\alpha}{(n-{i_0}+1)/\alpha+\mu_{i_0}}\prod_{j=i_0+1}^n \frac{(j-{i_0}+1)/\alpha+\mu_{i_0}-\mu_j}{(j-{i_0})/\alpha+\mu_{i_0}-\mu_j}
			\\&=	\frac{1/\alpha}{w_{i_0}+n/\alpha} \prod_{i=i_0+1}^n \frac{w_{i_0}-w_i+1/\alpha}{w_{i_0}-w_i}.
		\end{align*}
		For the column $C$, we have
		\begin{align*}
			\prod_{s\in C} \frac{c_\mu'(s)}{c_\lambda'(s)}
			=	\prod_{i=1}^{i_0-1} \frac{(\mu_{i}-\mu_{i_0})\alpha+{i_0}-i-1}{(\mu_{i}-\mu_{i_0})\alpha+i_0-i}
			=	\prod_{i=1}^{i_0-1} \frac{w_{i_0}-w_i+1/\alpha}{w_{i_0}-w_i}.
		\end{align*}
		The desired identity follows immediately.
	\end{proof}
	
	Note that if $\mu+\varepsilon_{i_0}$ is not a partition, that is, if $\mu_{i_0-1}=\mu_{i_0}$, then the expression in \cref{lem:phi} is zero, since the numerator indexed by $i=i_0-1$ is 0.
	
	Define the generating functions of $(f_r)$ and $(g_{r,n})$ as 
	\begin{align}
		F(\mu;s)	\coloneqq \sum_{r=0}^\infty f_r(\mu)s^r 
		\text{\quad and \quad}
		G_n(\mu;s)	\coloneqq \alpha+\sum_{r=0}^\infty g_{r,n}(\mu) s^{r+1}. 
		\label{eqn:GF-1}
	\end{align}
	\begin{prop}\label{lem:F-GF}
		The generating function $F(\mu;s)$ is
		\begin{align}\label{eqn:F-2}
			F(\mu;s) 
			&=\frac{1}{1+ns/\alpha} \(\prod_{i=1}^n \frac{1-(w_i-1/\alpha)s}{1-w_is}-\prod_{i=1}^n\frac{w_i+(n-1)/\alpha}{w_i+n/\alpha}\)
			\\&= \frac{1}{1+ns/\alpha} \(\prod_{i=1}^n \frac{1-(\mu_i-i/\alpha)s}{1-(\mu_i-(i-1)/\alpha)s}-\prod_{i=1}^n\frac{\mu_i+(n-i)/\alpha}{\mu_i+(n-i+1)/\alpha}\).\notag
		\end{align}
		In particular, $f_0(\mu)=F(\mu;s=0)=1-\prod_{i=1}^n\frac{w_i+(n-1)/\alpha}{w_i+n/\alpha}$.
		Note that if $n>\ell(\mu)$, then $w_n=-(n-1)/\alpha$ and the subtrahend $\prod_{i=1}^n\frac{w_i+(n-1)/\alpha}{w_i+n/\alpha}$ vanishes.
	\end{prop}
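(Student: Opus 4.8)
The plan is to perform the $r$-summation in the definition of $F(\mu;s)$ first. Only finitely many $\lambda$ cover $\mu$ (at most $n$), so starting from $f_r(\mu)=\sum_{\lambda\cover\mu}\phi_{\lambda/\mu}\rho(\lambda/\mu)^r$ the interchange is a finite manipulation of formal power series:
\[
  F(\mu;s)=\sum_{\lambda\cover\mu}\phi_{\lambda/\mu}\sum_{r\geq0}\(\rho(\lambda/\mu)s\)^r=\sum_{\lambda\cover\mu}\frac{\phi_{\lambda/\mu}}{1-\rho(\lambda/\mu)s},
\]
each geometric series being read as its Taylor expansion at $s=0$. I then index the covering partitions by the row $i_0$ in which the box is added, so $\rho(\lambda/\mu)=w_{i_0}$ and $\phi_{\lambda/\mu}$ is given by \cref{lem:phi}. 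Since that formula vanishes whenever $\mu+\varepsilon_{i_0}$ is not a partition (the remark after \cref{lem:phi}), I may let $i_0$ range over all of $1,\dots,n$ and obtain, for the actual values $w_i=\mu_i-(i-1)/\alpha$,
\[
  F(\mu;s)=\frac1\alpha\sum_{i_0=1}^n\frac{1}{\(1-w_{i_0}s\)\(w_{i_0}+n/\alpha\)}\prod_{\substack{i=1\\i\neq i_0}}^n\frac{w_{i_0}-w_i+1/\alpha}{w_{i_0}-w_i}.
\]

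What remains is to identify this sum with the right-hand side of \cref{eqn:F-2}, and I will do so as an identity of rational functions in independent indeterminates $w_1,\dots,w_n$ (and $s$), after which the desired statement follows by specialization. Writing $P(s)=\prod_i\frac{1-(w_i-1/\alpha)s}{1-w_is}$ and $c=\prod_i\frac{w_i+(n-1)/\alpha}{w_i+n/\alpha}$, the target right-hand side is $\frac{P(s)-c}{1+ns/\alpha}$, and the natural tool is comparison of principal parts in $s$. First, both sides vanish as $s\to\infty$: the left side termwise, and the right side because $P(s)-c$ stays bounded while $1+ns/\alpha$ grows. Second, the apparent pole of the right side at $s=-\alpha/n$ is spurious, since a direct substitution gives $P(-\alpha/n)=\prod_i\frac{n+\alpha w_i-1}{n+\alpha w_i}=c$, so the numerator vanishes there. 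Third, at each genuine pole $s=1/w_{i_0}$ the only contribution on the right comes from the single factor $\frac{1-(w_{i_0}-1/\alpha)s}{1-w_{i_0}s}$ of $P$, whose residue is $-1/(\alpha w_{i_0}^2)$; multiplying by the remaining factors of $P$ and by $1/(1+ns/\alpha)$ evaluated at $s=1/w_{i_0}$, and using $\frac{1-(w_i-1/\alpha)/w_{i_0}}{1-w_i/w_{i_0}}=\frac{w_{i_0}-w_i+1/\alpha}{w_{i_0}-w_i}$, collapses this to $-\frac{1}{\alpha w_{i_0}(w_{i_0}+n/\alpha)}\prod_{i\neq i_0}\frac{w_{i_0}-w_i+1/\alpha}{w_{i_0}-w_i}$, which is exactly the residue of the left side there. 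A rational function vanishing at infinity is determined by its principal parts, so the two sides coincide.

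The computations are elementary, and I expect the only delicate point to be the bookkeeping in the residue comparison at $s=1/w_{i_0}$, where the simple-pole structure of $P$ must be used to strip off the offending factor cleanly. Passing to indeterminate $w_i$ is what lets this argument run without worrying about the degenerate cases in which the true values $\mu_i-(i-1)/\alpha$ coincide or vanish, since the resulting identity of rational functions then specializes freely. Finally, the two ``in particular'' assertions drop out immediately: putting $s=0$ in \cref{eqn:F-2} leaves $f_0(\mu)=1-\prod_i\frac{w_i+(n-1)/\alpha}{w_i+n/\alpha}$, and when $n>\ell(\mu)$ one has $w_n=-(n-1)/\alpha$, so the factor $w_n+(n-1)/\alpha$ vanishes and kills that product.
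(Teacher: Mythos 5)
Your proposal is correct and takes essentially the same route as the paper: the same geometric-series summation and use of \cref{lem:phi} to reduce to a finite sum of simple poles, followed by a partial-fraction argument in $s$. Your principal-part comparison (matching residues at $s=1/w_{i_0}$ and checking that the pole at $s=-\alpha/n$ is spurious) is precisely the paper's cover-up computation of the coefficients $B_{i_0}$ and $B_0$ read in reverse, so the two proofs coincide up to presentation.
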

	
	\begin{proof}
		We have
		\begin{align*}
			F(\mu;s)	&\coloneqq \sum_{r=0}^\infty f_r(\mu)s^r 
			=	\sum_{\lambda\cover\mu}\sum_{r=0}^\infty \phi_{\lambda/\mu}\rho(\lambda/\mu)^r s^r
			=	\sum_{\lambda\cover\mu} \frac{\phi_{\lambda/\mu}}{1-\rho(\lambda/\mu)s}	
			\\&=	\sum_{i_0=1}^n  \frac{1}{1-w_{i_0}s}\frac{1/\alpha}{w_{i_0}+n/\alpha} \prod_{\substack{i=1\\i\neq i_0}}^n \frac{w_{i_0}-w_i+1/\alpha}{w_{i_0}-w_i}.
		\end{align*}
		Assume we have the following partial fraction expansion:
		\begin{align*}
			\frac{1}{1+ns/\alpha} \prod_{i=1}^n \frac{1-(w_i-1/\alpha)s}{1-w_is}
			= \frac{B_0}{1+ns/\alpha}+\sum_{i=1}^n \frac{B_i}{1-w_is},
		\end{align*}
		with undetermined coefficients $B_i$.
		For each $i_0=1,\dots,n$, multiply by $1-w_{i_0}s$ and evaluate at $s=1/w_{i_0}$ (note that $w_i$'s are pairwise distinct), we have
		\begin{align*}
			B_{i_0}	=	\frac{1/\alpha}{w_{i_0}+n/\alpha} \prod_{i\neq i_0} \frac{w_{i_0}-w_i+1/\alpha}{w_{i_0}-w_i},\quad i_0=1,\dots, n.
		\end{align*}
		Similarly, we have $B_0=\prod_{i=1}^n\frac{w_i+(n-1)/\alpha}{w_i+n/\alpha}$, giving the subtrahend in \cref{eqn:F-2}.
	\end{proof}
	\begin{prop}
		The generating function $G(\mu;s)$ is
		\begin{align}\label{eqn:Gn-2}
			G_n(\mu;s) &= \alpha\prod_{i=1}^n \frac{1-(w_i-1/\alpha)s}{1-w_is} 
			=\alpha\prod_{i=1}^n \frac{1-(\mu_i-i/\alpha)s}{1-(\mu_i-(i-1)/\alpha)s},
		\end{align}
		then for $r\geq0$,
		\begin{align}\label{eqn:grn-2}
			g_{r,n}(\mu) =  \alpha\sum_{p=0}^{r+1} (-1)^{p}e_{p}(w_1-1/\alpha,\dots,w_n-1/\alpha)h_{r+1-p}(w_1,\dots,w_n),
		\end{align}
		where $h_{k}=\sum_{|\lambda|=k}m_\lambda$ is the complete homogeneous symmetric polynomial of degree $k$.
	\end{prop}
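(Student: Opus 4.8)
The plan is to derive the closed form \cref{eqn:Gn-2} for $G_n(\mu;s)$ directly from the formula \cref{eqn:F-2} for $F(\mu;s)$ established in \cref{lem:F-GF}, by converting the pointwise recursion \cref{eqn:gf-rec} into a relation between the two generating functions; the explicit coefficient formula \cref{eqn:grn-2} then falls out by a routine Cauchy-product extraction. The genuine content has already been expended in proving \cref{lem:F-GF}, so the remaining steps are essentially formal manipulations of power series.

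First I would translate $g_{r,n}=\alpha f_{r+1}+nf_r$ into generating-function language. Multiplying by $s^{r+1}$ and summing over $r\geq0$, the first piece is $\alpha\sum_{r\geq0}f_{r+1}s^{r+1}=\alpha\(F(\mu;s)-f_0\)$ and the second is $n\sum_{r\geq0}f_rs^{r+1}=nsF(\mu;s)$. Adding the constant $\alpha$ from the definition \cref{eqn:GF-1} of $G_n$ gives
\[
	G_n(\mu;s)=\alpha+\alpha\(F(\mu;s)-f_0\)+nsF(\mu;s)=\alpha(1-f_0)+(\alpha+ns)F(\mu;s).
\]
The only point demanding care is precisely this bookkeeping: the index shift $s^{r+1}$ versus $s^r$ in the definition of $G_n$, together with the separate accounting of the boundary term $f_0$.

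Next I would substitute \cref{eqn:F-2}. Abbreviating $P(s)=\prod_{i=1}^n\frac{1-(w_i-1/\alpha)s}{1-w_is}$ and $Q=\prod_{i=1}^n\frac{w_i+(n-1)/\alpha}{w_i+n/\alpha}$, we have $F=\frac{1}{1+ns/\alpha}(P-Q)$ and, evaluating at $s=0$, $f_0=1-Q$, so that $1-f_0=Q$. Since $\alpha+ns=\alpha(1+ns/\alpha)$, the factor $\alpha+ns$ cancels the denominator of $F$, whence $(\alpha+ns)F=\alpha(P-Q)$. Therefore
\[
	G_n(\mu;s)=\alpha Q+\alpha(P-Q)=\alpha P,
\]
which is \cref{eqn:Gn-2}. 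The clean cancellation of the subtrahend $Q$ is exactly what collapses the expression to a single product; this cancellation is the one step worth flagging, as it is what makes the final answer so simple.

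Finally, to obtain \cref{eqn:grn-2} I would expand the two factors of $P$ separately: $\prod_{i=1}^n(1-(w_i-1/\alpha)s)=\sum_{p\geq0}(-1)^pe_p(w_1-1/\alpha,\dots,w_n-1/\alpha)s^p$ is the elementary-symmetric generating function, while $\prod_{i=1}^n(1-w_is)^{-1}=\sum_{k\geq0}h_k(w_1,\dots,w_n)s^k$ is the standard complete-homogeneous generating function. Forming the Cauchy product and reading off the coefficient of $s^{r+1}$—the coefficient carrying $g_{r,n}$ in \cref{eqn:GF-1}—yields $g_{r,n}(\mu)=\alpha\sum_{p=0}^{r+1}(-1)^pe_p(w-1/\alpha)h_{r+1-p}(w)$, as claimed. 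Terms with $p>n$ drop out automatically since $e_p$ then vanishes, so no separate truncation argument is needed.
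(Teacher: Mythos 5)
Your proposal is correct and follows essentially the same route as the paper's proof: convert the recursion $g_{r,n}=\alpha f_{r+1}+nf_r$ into the generating-function identity $G_n(\mu;s)=\alpha\big(1-f_0(\mu)\big)+(\alpha+ns)F(\mu;s)$, substitute the closed form of $F(\mu;s)$ from \cref{lem:F-GF} (where the prefactor cancels and the subtrahend $Q$ drops out), and then extract coefficients via the standard $e_p$/$h_k$ generating functions. Your write-up is in fact more explicit than the paper's about the boundary term $f_0$ and the cancellation, but the argument is the same.
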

	\begin{proof}
		By \cref{eqn:gf-rec,eqn:GF-1}, we have
		\begin{align*}
			G_n(\mu;s) =	\alpha\((1+ns/\alpha)F(\mu;s)+1-f_0(\mu)\),
		\end{align*}
		then \cref{eqn:Gn-2} follows from \cref{lem:F-GF}.
		To find $g_{r,n}$, we could expand
		\begin{align*}
			G_n(\mu;s)=	\alpha\sum_{r=0}^{n} e_r(w_1-1/\alpha,\dots,w_n-1/\alpha)(-s)^r \sum_{r=0}^\infty h_r(w_1,\dots,w_n)s^r,
		\end{align*}
		then comparing the coefficient of $s^{r+1}$ gives \cref{eqn:grn-2}.
	\end{proof}
	\subsubsection{The operators $\mathcal G_{r,n}$ and $\hyper[\M]{p}{}(\underline{a})$}
	Recall that the Debiard--Sekiguchi operator $D(t)$ acts on $(J_\mu)$ by
	\begin{align*}
		D(t)(J_\mu) = \prod_{i=1}^n(\mu_i-(i-1)\tau+t) J_\mu =\prod_{i=1}^n(w_i+t) \cdot J_\mu,
	\end{align*}
	and is expanded as 
	\begin{align*}
		D(t) = \sum_{r=0}^n t^{n-r} D_r.
	\end{align*}
	
	Now, we have 
	\begin{align*}
		\alpha\frac{D(-1/s-1/\alpha)}{D(-1/s)}(J_\mu) = 
		\alpha\prod_{i=1}^n \frac{1-(w_i-1/\alpha)s}{1-w_is} \cdot J_\mu
		=G_n(\mu;s) \cdot J_\mu.
	\end{align*}
	Define the operator $\mathcal G_n(s) $ and expand it as a Taylor series (in $s$ near $s=0$) as 
	\begin{align}\label{eqn:Gn}
		\mathcal G_n(s) 
		&\coloneqq \alpha\frac{D(-1/s-1/\alpha)}{D(-1/s)}
		=	\alpha \frac{\sum_{r=0}^n (-s)^{r}(1+s/\alpha)^{n-r}D_r}{\sum_{r=0}^n (-s)^rD_r}	\notag
		\\&=	\alpha\sum_{r=0}^n  (-s)^{r}(1+s/\alpha)^{n-r}D_r \cdot \sum_{k=0}^\infty \(-\sum_{r=1}^n (-s)^rD_r\)^k.
	\end{align}
	Defined $\mathcal G_{r,n}$ by 
	\begin{align}
		\mathcal G_n(s) \eqqcolon \alpha+\sum_{r=0}^\infty \mathcal G_{r,n}s^{r+1}.
	\end{align}
	Then by the very definition, we have
	\begin{align}
		\mathcal G_{n}(s)(J_\mu) = G_n(\mu;s)\cdot J_\mu,	\text{\quad and \quad}
		\mathcal G_{r,n}(J_\mu) = g_{r,n}(\mu)\cdot J_\mu.
	\end{align}
	\begin{example}\label{ex:G}
		The first few operators $\mathcal G_{r,n}$ are:
		\begin{align*}
			\mathcal G_{0,n} &=	n,	\\
			\mathcal G_{1,n} &=	D_1+\frac{n(n-1)}{2\alpha} = E_2,	\\
			\mathcal G_{2,n} &=	D_1^2-2D_2+\frac{n-1}{\alpha} D_1+\frac{n(n-1)(n-2)}{6\alpha^2} =  2\square +\(1+(n-1)/\alpha\)E_2,	\\
			\mathcal G_{3,n} &=	\(D_{1}^{3}-3D_{1}D_{2}+3D_{3}\) +\frac{1}{\alpha}\((n-1)D_{1}^{2}+(3-2n)D_{2}\) \notag
			\\&\=	+\frac{(n-1)(n-2)D_{1}}{2\alpha^2} +\frac{n(n-1)(n-2)(n-3)}{24\alpha^3}.
		\end{align*}
		Note that $\mathcal G_{r,n}$ for $r=0,1,2$ coincide with the operators Macdonald constructed.
	\end{example}
	Define the operator
	\begin{align}\label{eqn:pD1}
		\hyper[\M]{p}{} = \hyper[\M]{p}{}(\underline{a}) 
		\coloneqq	\sum_{r=0}^p e_{p-r}(\underline{a})\mathcal G_{r,n}
		=\CT\(s^{-1}\prod_{k=1}^p (a_k+s^{-1}) \cdot \mathcal G_{n}(s)\),
	\end{align}
	where $\CT$ denotes the constant term of the Laurent series in $s$ near $s=0$.
	Then $\hyper[\M]{p}{}(\underline{a}) $ acts diagonally on $(J_\mu)$ by
	\begin{align}
		\hyper[\M]{p}{}(\underline{a})(J_\mu)
		&=	\sum_{r=0}^p e_{p-r}(\underline{a})g_{r,n}(\mu) \cdot J_\mu\notag
		\\&=
		\alpha\sum_{\lambda\cover\mu} \prod_{k=1}^p \(\rho(\lambda/\mu)+a_k\) \binom{\lambda}{\mu}\frac{J_\lambda(\bm1_n)}{j_\lambda} \frac{j_\mu} {J_\mu(\bm1_n)}\cdot J_\mu.
	\end{align}
\subsubsection{\cref{thm:B}}
	For this subsection, we shall vary the number of variables.
	The operators $\hyper[\L]{}{q}(\underline{b})$ given by \cref{eqn:lower-x} and $\hyper[\M]{p}{}(\underline{a})$ by \cref{eqn:pD1} are in $n$ variables, which will be denoted by $\hyper[\L]{}{q}^{(n)}(\underline{b})$ and $\hyper[\M]{p}{}^{(n)}(\underline{a})$, respectively.
	For $1\leq m\leq n$, let $\hyper[\L]{}{q}^{(m)}(\underline{b})$ and $\hyper[\M]{p}{}^{(m)}(\underline{a})$ be the corresponding operators in the variables $(x_1,\dots,x_m)$.
	
	For any $m\leq n$, let $\mathscr F^{(x_1,\dots,x_m)}$ be the space over $\mathbb Q(\alpha)$ of formal power series in the form
	\begin{align}\label{eqn:F(x)}
		F(x_1,\dots,x_m) = \sum_{\lambda\in\mathcal P_m} C_\lambda(\alpha) \alpha^{|\lambda|} J_\lambda^*(x_1,\dots,x_m;\alpha),
	\end{align}
	where each $C_\lambda(\alpha)$ is in $\mathbb Q(\alpha)$. 
	The differential operators $\hyper[\M]{p}{}^{(m)}(\underline{a})$ and $\hyper[\L]{}{q}^{(m)}(\underline{b})$ are linear maps on the spaces $\mathscr F^{(x_1,\dots,x_m)}$ for each $m$.
	Thanks to \cref{eqn:stable}, if $F(x_1,\dots,x_n)\in \mathscr F^{(x_1,\dots,x_n)}$, then $$F(x_1,\dots,x_m,0\dots,0)\in\mathscr F^{(x_1,\dots,x_m)}.$$
	Write $\mathscr F^{(x)}=\mathscr F^{(x_1,\dots,x_n)}$.
	
	\begin{maintheorem}\label{thm:B}
		The hypergeometric series $\hyper{p}{q}(\underline{a};\underline{b};x_1,\dots,x_n;\alpha)$ is the unique solution in $\mathscr F^{(x)}$ of the equation 
		\begin{align}\label{eqn:pDq1x(F)}
			\(\hyper[\mathcal L]{}{q}^{(n)}(\underline{b})-\hyper[\M]{p}{}^{(n)}(\underline{a})\)(F(x_1,\dots,x_n))=0,
		\end{align}
		subject to 
		\begin{enumerate}
			\item the initial condition that $F(\bm0_n)=1$, i.e., $C_{(0)}(\alpha)=1$;
			\item the stability condition that for every $m\leq n-1$, $\hyper{p}{q}(\underline{a};\underline{b};x_1,\dots,x_m,0\dots,0;\alpha)$ is a solution of the equation
			\begin{align}\label{eqn:pDq1x(F)-m}
				\(\hyper[\mathcal L]{}{q}^{(m)}(\underline{b})-\hyper[\M]{p}{}^{(m)}(\underline{a})\)(F(x_1,\dots,x_m))=0.
			\end{align}
			(When $m$ is $n$, this is the same as \cref{eqn:pDq1x(F)}.)
		\end{enumerate}
	\end{maintheorem}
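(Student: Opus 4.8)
The plan is to convert the differential equation into a recursion on the coefficients $C_\lambda$, exactly as in the proof of \cref{thm:A}, and then to exploit the \emph{family} of recursions supplied by the stability hypothesis to pin the coefficients down one at a time. Writing a prospective solution as $F=\sum_\lambda C_\lambda(\alpha)\alpha^{|\lambda|}J_\lambda^*$, I would apply $\hyper[\L]{}{q}^{(m)}(\underline b)$ through the commutator action \cref{eqn:square2-E1} and the diagonal action of $\hyper[\M]{p}{}^{(m)}(\underline a)$ on $(J_\mu)$, then compare coefficients of $J_\mu^*(x_1,\dots,x_m)$ for each $\mu$ with $\ell(\mu)\le m$. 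Using $J_\lambda(\bm1_m)=\alpha^{|\lambda|}\poch{m/\alpha}{\lambda}$, I expect the $m$-variate equation \cref{eqn:pDq1x(F)-m} to be equivalent to the system, one relation per such $\mu$,
\begin{align*}
	\sum_{\lambda\cover\mu}\poch{m/\alpha}{\lambda}\frac{1}{j_\lambda}\binom{\lambda}{\mu}\Big(C_\lambda\prod_{k=1}^q(\rho(\lambda/\mu)+b_k)-C_\mu\prod_{k=1}^p(\rho(\lambda/\mu)+a_k)\Big)=0.
\end{align*}
The decisive structural feature is that $\poch{m/\alpha}{\lambda}=0$ whenever $\ell(\lambda)>m$, so the sum is automatically restricted to $\ell(\lambda)\le m$. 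Existence is then immediate: the coefficients $C_\lambda=\poch{\underline a}{\lambda}/\poch{\underline b}{\lambda}$ make the bracket vanish \emph{termwise} by \cref{eqn:poch-ratio} (this is \cref{eqn:C-rec}), so every equation holds; and the truncations of $\hyper{p}{q}(x)$ are again Jack hypergeometric series by \cref{eqn:stable}, which verifies the stability condition.

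For uniqueness, let $D_\lambda$ be the difference of the coefficients of two solutions, so that $D_{(0)}=0$ and each equation above holds with $C$ replaced by $D$. I would prove $D_\lambda=0$ by induction on $d=|\lambda|$: assuming $D_\mu=0$ for all $|\mu|\le d$, the degree-$(d+1)$ relations collapse to $\sum_{\lambda\cover\mu}\poch{m/\alpha}{\lambda}\,j_\lambda^{-1}\binom{\lambda}{\mu}\prod_{k}(\rho(\lambda/\mu)+b_k)\,D_\lambda=0$. Here the role of stability is that this now holds for \emph{every} $m$ with $\ell(\mu)\le m\le n$, not merely $m=n$; a single one of these relations is under-determined (one equation among several covers of $\mu$), which is exactly the obstruction flagged after \cref{eqn:pDq1x(F)-m} and illustrated by \cref{ex:stability}.

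To break the under-determination I would run a secondary induction within the degree-$(d+1)$ layer, ordered first by length $e=\ell(\lambda)$ and then by the last part $\lambda_e$, both ascending, and for each target $\lambda$ select the \emph{single} equation with $m=e$. If $\lambda_e=1$, take $\mu=(\lambda_1,\dots,\lambda_{e-1})$ of length $e-1$: the only cover of $\mu$ of length $e$ is $\lambda=\mu+\varepsilon_e$, while every other cover has length $e-1$ and is already known to vanish by the length-induction. If $\lambda_e\ge 2$, take $\mu=\lambda-\varepsilon_e$ of length $e$: at $m=e$ the covers of length $e+1$ drop out because $\poch{e/\alpha}{\cdot}$ kills them, and every remaining cover $\lambda'\neq\lambda$ has last part $\mu_e=\lambda_e-1<\lambda_e$, already known to vanish by the last-part induction. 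In both cases the surviving coefficient of $D_\lambda$ is $\poch{e/\alpha}{\lambda}\,j_\lambda^{-1}\binom{\lambda}{\mu}\prod_k(\rho(\lambda/\mu)+b_k)$, which is nonzero in $\Q(\alpha,\underline a,\underline b)$: every factor of $\poch{e/\alpha}{\lambda}=\prod_{i=1}^{e}\poch{(e-i+1)/\alpha}{\lambda_i}$ is nonzero because $\ell(\lambda)=e$, the binomial coefficient is positive by \cref{fact:bino}, and $\prod_k(\rho(\lambda/\mu)+b_k)\neq 0$ since the $b_k$ are indeterminates. Hence $D_\lambda=0$, completing both inductions.

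The main obstacle is precisely the matching carried out in the last paragraph: showing that for \emph{each} $\lambda$ there is a choice of $(\mu,m)$ for which the stability equation isolates $D_\lambda$ modulo coefficients that are provably already zero. I expect the bookkeeping to rest on two clean facts: (i) taking $m=\ell(\lambda)$ annihilates all covers of larger length through the vanishing of $\poch{m/\alpha}{\cdot}$, yielding triangularity in $e$; and (ii) adding a box to a fixed $\mu$ in a row other than the last strictly lowers the last part, yielding triangularity in $\lambda_e$. I would also check the degenerate boundary cases $e=1$ and $\lambda=(d+1)$, and confirm against \cref{ex:stability} that the stability hypothesis cannot be dropped, since without the equations for $m<n$ the system genuinely fails to determine the higher-length coefficients.
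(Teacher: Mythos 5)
Your proof is correct, and although it opens exactly as the paper does --- stability yields, for every $m$ with $\ell(\mu)\leq m\leq n$, the coefficient relation you display (which is \cref{eqn:rec-up} rescaled by the nonzero factor $\poch{m/\alpha}{\mu}/(\alpha j_\mu)$), and existence follows termwise from \cref{eqn:poch-ratio} together with \cref{eqn:stable} --- your uniqueness argument takes a genuinely different route. The paper isolates the $m$-dependence in the factor $J_\lambda(\bm1_m)/J_\mu(\bm1_m)=m+\alpha\rho(\lambda/\mu)$, which is \emph{linear} in $m$, and splits each relation into the two $m$-independent equations \cref{eqn:I,eqn:II}; uniqueness then comes from a reverse-lexicographic induction in which each $\mu$ of size $d$ contributes at most two new unknowns of size $d+1$, pinned down by a $2\times2$ system with nonzero determinant. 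You instead exploit the $m$-dependence through the vanishing $\poch{m/\alpha}{\lambda}=0$ for $\ell(\lambda)>m$: for each target $\lambda$ you select the \emph{single} relation indexed by $(\mu,m)=\bigl(\lambda-\varepsilon_{\ell(\lambda)},\ell(\lambda)\bigr)$, and the ascending (length, last part) order makes the system triangular, with exactly one surviving unknown per equation and a visibly nonzero coefficient. Your route dispenses with the $2\times2$ determinants and with the cover-counting lemma, and it treats the boundary case $\ell(\mu)=n$ uniformly; note that for such $\mu$ the paper's extraction of \cref{eqn:I,eqn:II} is not literally available, since only the single value $m=n$ is permitted there and a linear polynomial in $m$ cannot be split from one evaluation --- the paper's induction still goes through because such $\mu$ carry at most one new unknown (namely $\mu+\varepsilon_n$, whose coefficient $1+\alpha\mu_n$ times $K_{\underline{b}}$ is nonzero), but this patch is left implicit, whereas your argument never needs it. What the paper's route buys in exchange is intrinsic information: the relations \cref{eqn:I,eqn:II} hold for each $\mu$ independently of $m$ and of any ordering, and the count $2P_n(d-1)\geq P_n(d)$ makes the over-determinacy of the system explicit. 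Both arguments use the stability hypothesis in an essential way, consistent with \cref{ex:stability}.
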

	We remark that the stability condition is inspired by \cite[Theorem~4.10]{Kan96} in the Macdonald polynomial setting; see \cref{sec:future}.
	\begin{proof}
		For each $m\leq n$, let \cref{eqn:F(x)} be a solution of \cref{eqn:pDq1x(F)-m}.
		Then
		\begin{align*}
			&\=	\hyper[\L]{}{q}^{(m)}(\underline{b})\(F(x_1,\dots,x_m)\) 
			\\&= \sum_{\lambda\in\mathcal P_m} C_\lambda(\alpha) \alpha^{|\lambda|} \hyper[\L]{}{q}^{(m)}(\underline{b})\(J_\lambda^*(x_1,\dots,x_m)\)
			\\&= \sum_{\lambda\in\mathcal P_m}  C_\lambda(\alpha) \alpha^{|\lambda|} \cdot \sum_{\mu\coveredby\lambda}\prod_{k=1}^q \(\rho(\lambda/\mu)+b_k\) \binom{\lambda}{\mu} \frac{J_\lambda(\bm1_m)}{j_\lambda} \frac{j_\mu}{J_\mu(\bm1_m)} \cdot J_\mu^*(x_1,\dots,x_m)
			\\&=	\sum_{\mu\in\mathcal P_m}\sum_{\lambda\cover\mu} C_\lambda(\alpha) \alpha^{|\lambda|} \prod_{k=1}^q \(\rho(\lambda/\mu)+b_k\) \binom{\lambda}{\mu} \frac{J_\lambda(\bm1_m)}{j_\lambda} \frac{j_\mu} {J_\mu(\bm1_m)}\cdot J_\mu^*(x_1,\dots,x_m),	
			\intertext{and}
			&\=	\hyper[\M]{p}{}^{(m)}(\underline{a})\(F(x_1,\dots,x_m)\) 
			\\&= \sum_{\mu\in\mathcal P_m} C_\mu(\alpha) \alpha^{|\mu|} \hyper[\M]{p}{}^{(m)}(\underline{a})\(J_\mu^*(x_1,\dots,x_m)\)
			\\&= \sum_{\mu\in\mathcal P_m}  C_\mu(\alpha) \alpha^{|\mu|} \cdot \sum_{\lambda\cover\mu}\alpha \prod_{k=1}^p \(\rho(\lambda/\mu)+a_k\) \binom{\lambda}{\mu}\frac{J_\lambda(\bm1_m)}{j_\lambda} \frac{j_\mu} {J_\mu(\bm1_m)}\cdot J_\mu^*(x_1,\dots,x_m).
		\end{align*}
		Hence, comparing the coefficients of $J_\mu^*(x_1,\dots,x_m)$ for each $\mu\in\mathcal P_m$, we have
		\begin{equation}\label{eqn:rec-up}
			\begin{split}
				&\=	\sum_{\lambda\cover\mu} C_\lambda(\alpha) \prod_{k=1}^q \(\rho(\lambda/\mu)+b_k\)\binom{\lambda}{\mu}  \frac{J_\lambda(\bm1_m)}{j_\lambda} \frac{j_\mu} {J_\mu(\bm1_m)}
				\\&=C_\mu(\alpha) \sum_{\lambda\cover\mu} \prod_{k=1}^p \(\rho(\lambda/\mu)+a_k\)\binom{\lambda}{\mu}  \frac{J_\lambda(\bm1_m)}{j_\lambda} \frac{j_\mu} {J_\mu(\bm1_m)}.
			\end{split}
		\end{equation}
		By definitions of $\alpha$-Pochhammer symbol and \cref{eqn:poch-ratio}, we see that $C_\lambda(\alpha)=\frac{\poch{\underline{a}}{\lambda}}{\poch{\underline{b}}{\lambda}}$ satisfies \cref{eqn:rec-up} and the initial condition, and hence $\hyper{p}{q}(\underline{a};\underline{b};x_1,\dots,x_m,0\dots,0;\alpha)$ is a solution of the equation \cref{eqn:pDq1x(F)-m}.
		
		In what follows, we prove the uniqueness.
		Let 
		\begin{align*}
			K_{\underline{a}}(\lambda,\mu) \coloneqq \prod_{k=1}^p \(\rho(\lambda/\mu)+a_k\)\binom{\lambda}{\mu} \frac{j_\mu}{j_\lambda},
			\quad
			K_{\underline{b}}(\lambda,\mu) \coloneqq \prod_{k=1}^q \(\rho(\lambda/\mu)+b_k\)\binom{\lambda}{\mu} \frac{j_\mu}{j_\lambda}.
		\end{align*}
		Note that $\frac{J_\lambda(\bm1_m)}{J_\mu(\bm1_m)} = m+\alpha\rho(\lambda/\mu)$ is the only term involving $m$ in \cref{eqn:rec-up}. Comparing the coefficients of $m$ and the constant terms gives 
		\begin{align}
			\sum_{\lambda\cover\mu} C_\lambda(\alpha) K_{\underline{b}}(\lambda,\mu)
			&=C_\mu(\alpha) \sum_{\lambda\cover\mu} K_{\underline{a}}(\lambda,\mu),
			\tag{I}\label{eqn:I}\\	
			\sum_{\lambda\cover\mu} C_\lambda(\alpha) \rho(\lambda/\mu)K_{\underline{b}}(\lambda,\mu)
			&=C_\mu(\alpha) \sum_{\lambda\cover\mu} \rho(\lambda/\mu)K_{\underline{a}}(\lambda,\mu). \tag{II}\label{eqn:II}
		\end{align}
		It suffices to show that \cref{eqn:I,eqn:II} and the initial condition determine $(C_\lambda(\alpha))$ uniquely.
		
		When $\mu=(0)$, $C_{(0)}(\alpha)=1$, and \cref{eqn:I,eqn:II} involve one unknown $C_{(1)}(\alpha)$.
		Then we have 
		\begin{align*}
			C_{(1)}(\alpha)=\frac{\prod_{k=1}^p \(\rho((1)/(0))+a_k\)}{\prod_{k=1}^q \(\rho((1)/(0))+b_k\)} = \frac{\poch{\underline{a}}{(1)}}{\poch{\underline{b}}{(1)}}.
		\end{align*}
		
		Let $P_n(d)$ be the number of partitions of size $d$ and length at most $n$.
		By induction, assume that all $C_\mu(\alpha)$ are known for $|\mu|\leq d-1$.
		Now, we have $P_n(d)$ many unknowns $C_\lambda(\alpha)$, and $2P_n(d-1)$ many equations.
		Since $2P_n(d-1)\geq P_n(d)$, this is an over-determined linear system of equations. 
		
		For a partition $\mu$, let $S(\mu)$ be the set of partitions covering $\mu$.
		Let $>_{L}$ be the (reverse) lexicographical total order of the set of partitions of length at most $n$ and of size $d$, that is, $\lambda^1>_{L}\lambda^2$ if and only if the first nonzero entry of $(\lambda_i^1-\lambda^2_i)_i$ is positive.
		For example, when $d=6$ and $n\geq6$, we have
		\begin{align*}
			(6)>_{L}(51)>_{L}(42)>_{L}(411)>_{L}(33)>_{L}(321)>_{L}(31^3)>_{L}(2^3)>_{L}(2211)>_{L}(21^4)>_{L}(1^6).
		\end{align*}
		For each $d\geq1$, let $\mu^1>_{L}\dots>_{L}\mu^{K}$ be all partitions of length at most $n$ and size $d$, where $K=P_n(d)$. 
		Then $\lambda^1=(d)$ and $S(\lambda^1)=\{(d+1),(d,1)\}$.
		We claim that for each $2\leq k\leq K$, the set-theoretic difference $S(\mu^k)\setminus\bigcup_{i=1}^{k-1} S(\mu^{i})$ has cardinality at most two.
		In fact, for each $\mu^k$, there are at most $\ell(\mu^k)+1$ partitions covering $\mu^k$, namely $\mu^k+\varepsilon_i$, for $i=1,\dots,\ell(\mu^k)+1$.
		For $i=1,\dots,\ell(\mu^k)-1$ such that $\mu^k+\varepsilon_i$ is a partition, $\mu^k+\varepsilon_i$ is in $\bigcup_{i=1}^{k-1} S(\mu^{i})$. 
		This is because $\mu^k+\varepsilon_i-\varepsilon_{\ell(\mu^k)}$ is a partition that  precedes $\mu^{k}$ and $\mu^k+\varepsilon_i\in S(\mu^k+\varepsilon_i-\varepsilon_{\ell(\mu^k)})$.
		We are left with at most two partitions that cover $\mu^k$, namely $\mu^k+\varepsilon_{\ell(\mu^k)}$ and $\mu^k+\varepsilon_{\ell(\mu^k)+1}$.
		
		Now by induction on $k$, suppose all $C_\lambda(\alpha)$'s are known for $\lambda\in\bigcup_{i=1}^{k-1} S(\mu^{i})$, then \cref{eqn:I,eqn:II} indexed by $\mu^{k}$ involve at most two new unknowns. 
		If there is no new unknown, there is nothing to be done; if there is only one new unknown, it can be determined by \cref{eqn:I} along; now, assume the two new unknowns are $C_{\lambda^{l_1}}(\alpha)$ and $C_{\lambda^{l_2}}(\alpha)$. 
		\cref{eqn:I,eqn:II} are in the form:
		\begin{align*}
			\begin{pmatrix}
				K_{\underline{b}}(\lambda^{l_1},\mu^k)&K_{\underline{b}}(\lambda^{l_2},\mu^k)	\\
				\rho(\lambda^{l_1}/\mu^k)K_{\underline{b}}(\lambda^{l_1},\mu^k)&\rho(\lambda^{l_2}/\mu^k)K_{\underline{b}}(\lambda^{l_2},\mu^k)
			\end{pmatrix}
			\begin{pmatrix}
				C_{\lambda^{l_1}}(\alpha)\\C_{\lambda^{l_2}}(\alpha)
			\end{pmatrix}=
			\begin{pmatrix}
				*\\ *
			\end{pmatrix}
		\end{align*}
		where each $*$ involves $C_\lambda(\alpha)$ for $\lambda\in \bigcup_{i=1}^{k-1} S(\mu^{i})$. 
		Now, $C_{\lambda^{l_1}}(\alpha)$ and $C_{\lambda^{l_2}}(\alpha)$ are also determined, since the determinant $K_{\underline{b}}(\lambda^{l_1},\mu^k)K_{\underline{b}}(\lambda^{l_2},\mu^k)(\rho(\lambda^{l_2}/\mu^k)-\rho(\lambda^{l_1}/\mu^k))$ is nonzero.
	\end{proof}
	\begin{example}
		In the proof, we examined $\mu=(0)$. 
		Now, consider $\mu=(1)$, then $\lambda^1=(2)$ and $\lambda^2=(1,1)$ are the partitions covering $\mu$.
		Note that $\rho(\lambda^1/\mu)=1$, $\rho(\lambda^2/\mu)=-1/\alpha$, $\binom{\lambda^1}{\mu}=\binom{\lambda^2}{\mu}=2$, $\frac{j_\mu}{j_{\lambda^1}}=\frac{1}{2\alpha(\alpha+1)}$, and $\frac{j_\mu}{j_{\lambda^2}}=\frac{1}{2(\alpha+1)}$, then \cref{eqn:rec-up} becomes
		\begin{align*}
			&\=	C_{\lambda^1}(\alpha) \prod_{k=1}^q \(1+b_k\) \frac{m+\alpha}{\alpha(\alpha+1)}
			+C_{\lambda^2}(\alpha) \prod_{k=1}^q \(-1/\alpha+b_k\) \frac{m-1}{\alpha+1}
			\\&=C_{\mu}(\alpha)\cdot \(\prod_{k=1}^p \(1+a_k\) \frac{m+\alpha}{\alpha(\alpha+1)}+\prod_{k=1}^q \(-1/\alpha+b_k\) \frac{m-1}{\alpha+1}\),
		\end{align*}
		comparing the coefficients of $m$ and the constant terms gives \cref{eqn:I,eqn:II} for $\mu=(1)$:
		\begin{align*}
			C_{\lambda^1}(\alpha) \prod_{k=1}^q \(1+b_k\) -C_{\lambda^2}(\alpha) \prod_{k=1}^q \(-1/\alpha+b_k\)
			&=	C_{\mu}(\alpha)\cdot \(\prod_{k=1}^p \(1+a_k\) -\prod_{k=1}^q \(-1/\alpha+a_k\) \) ,	\\
			C_{\lambda^1}(\alpha) \prod_{k=1}^q \(1+b_k\) +\alpha C_{\lambda^2}(\alpha) \prod_{k=1}^q \(-1/\alpha+b_k\) 
			&=	C_{\mu}(\alpha)\cdot \(\prod_{k=1}^p \(1+a_k\)+\alpha\prod_{k=1}^p \(-1/\alpha+a_k\) \),
		\end{align*}
		hence
		\begin{align*}
			C_{\lambda^1}(\alpha) &= C_{\mu}(\alpha)\cdot \frac{\prod_{k=1}^p \(1+a_k\)}{\prod_{k=1}^q \(1+b_k\)}
			 =\frac{\poch{\underline{a}}{\lambda^1}}{\poch{\underline{b}}{\lambda^1}},	\\
			C_{\lambda^2}(\alpha) &= C_{\mu}(\alpha)\cdot \frac{\prod_{k=1}^p \(-1/\alpha+a_k\)}{\prod_{k=1}^q \(-1/\alpha+b_k\)}
			 =\frac{\poch{\underline{a}}{\lambda^2}}{\poch{\underline{b}}{\lambda^2}}.
		\end{align*}
	\end{example}
	\begin{example}\label{ex:stability}
		In this example, we illustrate why the stability condition in \cref{thm:B} is required. For an alternative condition, see \cite[Theorem~2.11]{Yan92} and the discussion below.
		
		Let $n=2$, and $p=q=0$. By \cref{eqn:0F0}, we have 
		\begin{align*}
			F(x_1,x_2) = \hyper{0}{0}(\text{---};\text{---};x_1,x_2;\alpha) = \exp(x_1+x_2).
		\end{align*}
		and by definitions and \cref{ex:G}, the corresponding operators are
		\begin{align*}
			\hyper[\L]{}{0}^{(m)} = \partial_1+\partial_2,\quad
			\hyper[\M]{0}{}^{(m)} = m,	\quad m=1,2.
		\end{align*}
		One can readily see that $F(x_1,x_2)$ satisfies \cref{eqn:pDq1x(F)-m} for $m=1,2$:
		\begin{align}
			(\partial_1+\partial_2-1)F(x_1,0)&=0,	\label{eqn:0D0(F)-1}\\
			(\partial_1+\partial_2-2)F(x_1,x_2)&=0.	\label{eqn:0D0(F)-2}
		\end{align}
		and the initial condition that $F(0,0)=1$.
		
		It is not hard to see that in $\mathcal F^{(x_1,x_2)}$, any solution of \cref{eqn:0D0(F)-2} satisfying the initial condition has the form
		\begin{align*}
			G(x_1,x_2) = \exp(x_1+x_2) H(x_1-x_2),
		\end{align*}
		where $H(z)$ is any even analytic function with $H(0)=1$.
		Further imposing \cref{eqn:0D0(F)-1} determines that $H\equiv1$ and thus $F(x_1,x_2)=\exp(x_1+x_2)$ is the unique solution of \cref{eqn:0D0(F)-1,eqn:0D0(F)-2}.
	\end{example}
	
	Let us review the proof above.
	The lowering operator $\L$ is used to \emph{lower} the degree of each $J_\lambda^*$.
	After interchanging the order of summation, this produces an \emph{upper sum} (a sum over larger partitions) in the coefficients.
	To compensate for this, we introduced the eigen-operator $\M$ and derived \cref{eqn:rec-up}.
	However, \cref{eqn:rec-up} proceeds in the wrong direction, creating an under-determined system.
	To resolve this, we used the stability condition to double the number of equations, and then showed that the resulting over-determined system nevertheless has a unique solution.
	
	A similar difficulty arose in Yan's study of differential equations for $\hyper{2}{1}$.
	There, the resolution was to assume that the differential equation \cref{eqn:pDq1x(F)} holds for all $n\geq2$ \cite[Thm.~2.11]{Yan92}.
	
	An alternative approach is to avoid the difficulty altogether.
	Instead of lowering, one can use the raising operator $\R$, which \emph{raises} the degree of $J_\lambda^*$.
	After interchanging the summation order, the coefficients now involve a \emph{lower sum} (over smaller partitions).
	Introducing another eigen-operator $\N$ then yields a simple recursion for $C_\lambda(\alpha)$.
	This approach is developed in the next subsection.
	
\subsection{The general case: the raising operator}
	Consider the raising operator $\hyper[\R]{p}{}(\underline{a})$ defined by \cref{eqn:raise-x}.
	Then
	\begin{align*}
		&\=	\hyper[\R]{p}{}(\underline{a}) \(\hyper{p}{q}(\underline{a};\underline{b};x;\alpha)\)
		\\&=	\sum_\mu \frac{\poch{\underline{a}}{\mu}}{\poch{\underline{b}}{\mu}}\alpha^{|\mu|} \alpha\sum_{\lambda\cover\mu} \prod_{k=1}^p(\rho(\lambda/\mu)+a_k)\binom{\lambda}{\mu} J_\lambda^* 
		\\&=	\sum_\lambda \frac{\poch{\underline{a}}{\lambda}}{\poch{\underline{b}}{\lambda}}\alpha^{|\lambda|} \(\sum_{\mu\coveredby\lambda} \prod_{k=1}^q(\rho(\lambda/\mu)+b_k)\binom{\lambda}{\mu} \)J_\lambda^*
	\end{align*}
	Now, for each $r\geq0$, we need to find an eigen-operator $\mathcal H_r$, such that 
	\begin{align}
		\mathcal H_{r}(J_\lambda) = H_{r}(\lambda) \cdot J_\lambda,
	\end{align}
	where the eigenvalue $H_{r}(\lambda)$ is
	\begin{align}\label{eqn:H_r-def}
		H_{r}(\lambda) \coloneqq \sum_{\mu\coveredby\lambda} \rho(\lambda/\mu)^r\binom{\lambda}{\mu}.
	\end{align}
\subsubsection{The function $H_r(\mu)$}

	Consider the generating functions
	\begin{align}
		\mathcal H(s) \coloneqq \sum_{r=0}^\infty \mathcal H_{r}(\lambda)s^r,\text{\quad and\quad}
		H(\lambda;s) \coloneqq \sum_{r=0}^\infty H_{r}(\lambda)s^r.\label{eqn:H-def}
	\end{align}
	In this subsection only, let 
	\begin{align}
		w_i=w_i(\lambda)\coloneqq\lambda_i-(i-1)/\alpha,	\quad 1\leq i\leq n,
	\end{align}
	which is equal to $\rho(\lambda/(\lambda-\varepsilon_i))+1$ if $\lambda-\varepsilon_i$ is a partition.
	(Note that previously we used $\mu$ instead of $\lambda$.)
	\begin{lemma}\label{lem:bino-compute}
		Let $\mu=\lambda-\varepsilon_{i_0}$ be a partition for some $i_0$, then 
		\begin{align}
			\binom{\lambda}{\mu} 
			&=	\(w_{i_0}+(n-1)/\alpha\)\cdot \prod_{\substack{i=1\\i\neq i_0}}^{n}\frac{w_i-w_{i_0}+1/\alpha}{w_i-w_{i_0}}
			\\&=	\(\lambda_{i_0}+(n-i)/\alpha\)\cdot \prod_{\substack{i=1\\i\neq i_0}}^{n}\frac{\lambda_{i_0}-\lambda_i+(i-i_0-1)/\alpha}{\lambda_{i_0}-\lambda_i+(i-i_0)/\alpha}.\notag
		\end{align}
	\end{lemma}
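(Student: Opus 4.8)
The plan is to compute $\binom{\lambda}{\mu}$ directly from the adjacent‑case formula in \cref{fact:bino}, in exact parallel with the proof of \cref{lem:phi} but with the roles of the primed and unprimed hook lengths exchanged. Writing $\lambda/\mu=(i_0,\lambda_{i_0})$ and taking $R,C$ as in \cref{eqn:RC}, \cref{fact:bino} gives $\binom{\lambda}{\mu}=\prod_{s\in C}\frac{c_\lambda(s)}{c_\mu(s)}\prod_{s\in R}\frac{c_\lambda'(s)}{c_\mu'(s)}$. I will treat the two products separately and show that the column product contributes $\prod_{i<i_0}\frac{w_i-w_{i_0}+1/\alpha}{w_i-w_{i_0}}$ while the row product contributes $(w_{i_0}+(n-1)/\alpha)\prod_{i>i_0}\frac{w_i-w_{i_0}+1/\alpha}{w_i-w_{i_0}}$; multiplying the two then yields the claim.

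The column product is the easy half. For $s=(i,\lambda_{i_0})$ with $i<i_0$ one has $\lambda_i=\mu_i$ and, crucially, $\lambda'_{\lambda_{i_0}}=i_0$ (every row $1,\dots,i_0$ reaches column $\lambda_{i_0}$, whereas row $i_0+1$ does not, since $\lambda_{i_0+1}=\mu_{i_0+1}\le\mu_{i_0}<\lambda_{i_0}$), so $\mu'_{\lambda_{i_0}}=i_0-1$. Substituting the definitions of $c_\lambda,c_\mu$, each factor collapses to $\frac{(\lambda_i-\lambda_{i_0})\alpha+i_0-i+1}{(\lambda_i-\lambda_{i_0})\alpha+i_0-i}$, which in the $w$‑variables is exactly $\frac{w_i-w_{i_0}+1/\alpha}{w_i-w_{i_0}}$, and the product runs over $i=1,\dots,i_0-1$.

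The row product is the crux, and where the $n$‑dependence must ultimately arise. Since the new box $(i_0,\lambda_{i_0})$ has $c_\lambda'=\alpha$, and $\lambda_j'=\mu_j'$ for every $j\le\mu_{i_0}$ (adding the box changes the conjugate only in column $\lambda_{i_0}$), the product collapses to $\prod_{j=1}^{\mu_{i_0}}\frac{(\lambda_{i_0}-j+1)\alpha+\lambda_j'-i_0}{(\lambda_{i_0}-j)\alpha+\lambda_j'-i_0}$. On each maximal run of columns where $\lambda_j'$ is constant the factors telescope, because the denominator at $j$ equals the numerator at $j+1$, leaving a single boundary ratio per run. Indexing the runs by the row value $k=\lambda_j'$ and passing to the $w$‑variables turns each surviving ratio into $\frac{w_{i_0}-w_{k+1}-1/\alpha}{w_{i_0}-w_k}$ (with $\lambda_{\ell(\lambda)+1}=0$), and these telescope once more across $k$. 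The net outcome is $(w_{i_0}+(\ell(\lambda)-1)/\alpha)\prod_{i_0<i\le\ell(\lambda)}\frac{w_i-w_{i_0}+1/\alpha}{w_i-w_{i_0}}$, which is precisely the asserted row contribution in the case $n=\ell(\lambda)$.

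It remains to pass from $n=\ell(\lambda)$ to arbitrary $n$, and this is the one genuinely delicate point; the main obstacle throughout is the bookkeeping of the block boundaries and checking that the double telescoping leaves exactly the prefactor $w_{i_0}+(\ell(\lambda)-1)/\alpha$ and nothing more. The cleanest way to conclude is to observe that both sides of the claimed identity are independent of $n$: the left side because generalized binomial coefficients do not depend on the number of variables (as noted after \cref{eqn:bino}), and the right side by the one‑line check that replacing $n$ by $n+1$ introduces a single factor at $i=n+1$ (an empty row, so $w_{n+1}=-n/\alpha$) that cancels exactly against the change $w_{i_0}+(n-1)/\alpha\mapsto w_{i_0}+n/\alpha$ in the prefactor. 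Granting this, it suffices to verify the identity at $n=\ell(\lambda)$, which is what the column and row computations above supply; the second displayed form of the lemma then follows by expanding $w_i=\lambda_i-(i-1)/\alpha$.
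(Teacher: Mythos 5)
Your proposal is correct, and its skeleton is the same as the paper's proof: both start from \cref{fact:bino}, split $\binom{\lambda}{\mu}$ into the column product over $C$ and the row product over $R$, and your column computation (via $\lambda'_{\lambda_{i_0}}=i_0$, $\mu'_{\lambda_{i_0}}=i_0-1$) is exactly the paper's. Where you genuinely diverge is the row product. The paper quotes a closed-form evaluation of the co-hook product along a full row,
$\prod_{s\in\text{row }i_0} c_\lambda'(s) = \alpha^{\lambda_{i_0}}\poch{1+(n-i_0)/\alpha}{\lambda_{i_0}} \prod_{i=i_0+1}^n \poch{1+(i-i_0-1)/\alpha}{\lambda_{i_0}-\lambda_i}/\poch{1+(i-i_0)/\alpha}{\lambda_{i_0}-\lambda_i}$,
and takes the ratio of this expression for $\lambda$ and for $\mu$; this works uniformly in $n$, with the telescoping hidden inside the Pochhammer identity. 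You instead telescope $\prod_{j=1}^{\mu_{i_0}}\frac{(\lambda_{i_0}-j+1)\alpha+\lambda'_j-i_0}{(\lambda_{i_0}-j)\alpha+\lambda'_j-i_0}$ by hand over maximal runs of constant $\lambda'_j$, evaluate at $n=\ell(\lambda)$, and then upgrade to general $n$ by observing that both sides of the identity are $n$-independent --- which is legitimate: the $n$-independence of $\binom{\lambda}{\mu}$ is recorded in the paper right after \cref{eqn:bino}, and your cancellation check for the right-hand side (the new factor at $i=n+1$ with $w_{n+1}=-n/\alpha$ against the shift of the prefactor) is correct. The two arguments are the same computation packaged differently: the paper's is shorter because the row-product formula is taken off the shelf, while yours makes the telescoping mechanism visible and isolates exactly where the dependence on $n$ enters.

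One repair is needed in your intermediate claim: the uniform formula $\frac{w_{i_0}-w_{k+1}-1/\alpha}{w_{i_0}-w_k}$ for the surviving boundary ratio fails for the run $k=i_0$, where its denominator would be $w_{i_0}-w_{i_0}=0$. The reason is that this run is truncated at column $\mu_{i_0}=\lambda_{i_0}-1$ rather than ending at column $\lambda_k$, so its boundary denominator is $(\lambda_{i_0}-\mu_{i_0})\alpha+\lambda'_{\mu_{i_0}}-i_0=\alpha$, and the surviving ratio for that run is $w_{i_0}-w_{i_0+1}-1/\alpha$ (which correctly degenerates to $1$ when the run is empty, i.e.\ when $\lambda_{i_0+1}=\lambda_{i_0}-1$). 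With this correction the product over runs becomes
$\frac{\prod_{k=i_0+1}^{\ell+1}(w_{i_0}-w_k-1/\alpha)}{\prod_{k=i_0+1}^{\ell}(w_{i_0}-w_k)}$
with $w_{\ell+1}=-\ell/\alpha$, $\ell=\ell(\lambda)$, which is precisely your claimed net outcome $(w_{i_0}+(\ell-1)/\alpha)\prod_{i_0<i\leq\ell}\frac{w_i-w_{i_0}+1/\alpha}{w_i-w_{i_0}}$. So the glitch is cosmetic, not structural, and the proof goes through.
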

	\begin{proof}
		Let $R$ and $C$ be as in \cref{fact:bino}, then
		\begin{align*}
			\binom{\lambda}{\mu} = \prod_{s\in C}\frac{c_\lambda(s)}{c_\mu(s)} \prod_{s\in R}\frac{c_\lambda'(s)}{c_\mu'(s)}.
		\end{align*}	
		Since 
		\begin{align}
			\prod_{s\in\text{row }i_0} c_\lambda'(s) = \alpha^{\lambda_{i_0}}\poch{1+(n-i_0)/\alpha}{\lambda_{i_0}} \prod_{i=i_0+1}^n \frac{\poch{1+(i-i_0-1)/\alpha}{\lambda_{i_0}-\lambda_i}}{\poch{1+(i-i_0)/\alpha}{\lambda_{i_0}-\lambda_i}},
		\end{align}
		we have
		\begin{align*}
			\prod_{s\in R}\frac{c_\lambda'(s)}{c_\mu'(s)}
			=\frac{\frac{1}{\alpha}\prod_{s\in\text{row }i_0}c_\lambda'(s)}{\prod_{s\in\text{row }i_0}c_\mu'(s)}
			=	\(\lambda_{i_0}+(n-i)/\alpha\)\cdot \prod_{i=i_0+1}^n \frac{\lambda_{i_0}-\lambda_i+(i-i_0-1)/\alpha}{\lambda_{i_0}-\lambda_i+(i-i_0)/\alpha}.
		\end{align*}
		For the column $C$, we have
		\begin{align*}
			\prod_{s\in C}\frac{c_\lambda(s)}{c_\mu(s)}
			=	\prod_{i=1}^{i_0-1}\frac{\lambda_{i_0}-\lambda_i+(i-i_0-1)/\alpha}{\lambda_{i_0}-\lambda_i+(i-i_0)/\alpha}.
		\end{align*}
		The desired identity follows immediately.
	\end{proof}
	Note that when $\lambda-\varepsilon_{i_0}$ is not a partition, i.e., when $i_0<n$ and $\lambda_{i_0}=\lambda_{i_0+1}$, the expression in \cref{lem:bino-compute} is zero, since the numerator indexed by $i=i_0+1$ is 0.
	\begin{prop}\label{lem:H-GF}
		The generating function $H(\lambda;s)$ is 
		\begin{align}
			H(\lambda;s) = \frac{\alpha}{s^2}\(1+(1-1/\alpha)s-\(1+(1+(n-1)/\alpha)s\) \tilde H(\lambda;s)\),
		\end{align}
		where 
		\begin{align}
			\tilde H(\lambda;s) \coloneqq \prod_{i=1}^n \frac{1-(w_{i}-1+1/\alpha)s}{1-(w_i-1)s}.
		\end{align}
	\end{prop}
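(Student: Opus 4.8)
The plan is to reduce the claimed formula to a single rational-function identity in $s$ and then verify it by a partial-fraction/residue computation, exactly in the spirit of \cref{lem:F-GF}. Clearing the prefactor $\alpha/s^2$ and rearranging, the assertion is equivalent to
\begin{equation}
	\frac{s^2}{\alpha}H(\lambda;s)+\Bigl(1+\bigl(1+\tfrac{n-1}{\alpha}\bigr)s\Bigr)\tilde H(\lambda;s)=1+\bigl(1-\tfrac1\alpha\bigr)s.
	\tag{$\star$}
\end{equation}
First I would put $H(\lambda;s)$ into closed rational form. Since $\mu\coveredby\lambda$ means $\mu=\lambda-\varepsilon_{i_0}$ and $\rho(\lambda/\mu)=w_{i_0}-1$, summing the geometric series in the definitions \cref{eqn:H_r-def,eqn:H-def} gives
\begin{equation*}
	H(\lambda;s)=\sum_{i_0=1}^n \frac{\binom{\lambda}{\lambda-\varepsilon_{i_0}}}{1-(w_{i_0}-1)s},
\end{equation*}
where, by the remark following \cref{lem:bino-compute}, the terms for which $\lambda-\varepsilon_{i_0}$ is not a partition vanish, so the sum may be taken over all $i_0$. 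With the product formula of \cref{lem:bino-compute} substituted in, every ingredient of $(\star)$ is an explicit rational function of $s$ whose only possible poles are the simple poles $s_{i_0}\coloneqq 1/(w_{i_0}-1)$.

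The heart of the matter is a residue computation at each $s_{i_0}$. Write $u_i=w_i-1$ and $\gamma=1+(n-1)/\alpha$. Differentiating the denominator $\prod_i(1-u_is)$ and keeping the single surviving factor yields
\begin{equation*}
	\operatorname{Res}_{s=s_{i_0}}\tilde H(\lambda;s)=\frac{1/\alpha}{u_{i_0}^2}\prod_{\substack{i=1\\i\ne i_0}}^n\frac{w_i-w_{i_0}+1/\alpha}{w_i-w_{i_0}},
\end{equation*}
while from the closed form above $\operatorname{Res}_{s=s_{i_0}}H(\lambda;s)=-\binom{\lambda}{\lambda-\varepsilon_{i_0}}/u_{i_0}$. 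Using $1+\gamma s_{i_0}=(w_{i_0}+(n-1)/\alpha)/u_{i_0}$ together with the factorization $\binom{\lambda}{\lambda-\varepsilon_{i_0}}=(w_{i_0}+(n-1)/\alpha)\prod_{i\ne i_0}\tfrac{w_i-w_{i_0}+1/\alpha}{w_i-w_{i_0}}$ from \cref{lem:bino-compute}, one finds
\begin{equation*}
	\operatorname{Res}_{s=s_{i_0}}\Bigl[(1+\gamma s)\tilde H\Bigr]=\frac{\binom{\lambda}{\lambda-\varepsilon_{i_0}}}{\alpha u_{i_0}^3},\qquad
	\operatorname{Res}_{s=s_{i_0}}\Bigl[\tfrac{s^2}{\alpha}H\Bigr]=-\frac{\binom{\lambda}{\lambda-\varepsilon_{i_0}}}{\alpha u_{i_0}^3}.
\end{equation*}
Thus the two residues cancel at every pole, and the left-hand side of $(\star)$ is a polynomial in $s$.

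It remains to identify this polynomial. Since $\tilde H(\lambda;s)$ tends to a constant and $H(\lambda;s)=O(1/s)$ as $s\to\infty$, the left-hand side of $(\star)$ grows at most linearly, hence is a polynomial of degree at most one. Evaluating at $s=0$ gives $1$ (the $s^2H$ term drops out and $\tilde H(\lambda;0)=1$), and comparing the coefficient of $s$ — using the one-line expansion $\tilde H(\lambda;s)=1-\tfrac{n}{\alpha}s+O(s^2)$, so that $(1+\gamma s)\tilde H$ has linear coefficient $\gamma-\tfrac n\alpha=1-\tfrac1\alpha$ — shows both sides of $(\star)$ agree as polynomials of degree $\le 1$. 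This proves $(\star)$, and hence the proposition.

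The only delicate point I anticipate is the residue cancellation: it depends on computing $\operatorname{Res}_{s_{i_0}}\tilde H$ correctly and recognizing the emerging factor $(w_{i_0}+\tfrac{n-1}{\alpha})\prod_{i\ne i_0}\tfrac{w_i-w_{i_0}+1/\alpha}{w_i-w_{i_0}}$ as precisely $\binom{\lambda}{\lambda-\varepsilon_{i_0}}$ via \cref{lem:bino-compute}. A minor bookkeeping subtlety is that for $\lambda=(1^k)$ one has $w_1-1=0$, so the $i_0=1$ pole degenerates; this is handled by reading $(\star)$ as an identity of rational functions in the $w_i$ (equivalently in $\alpha$), proving it where the $w_i$ are distinct and different from $1$, and then specializing.
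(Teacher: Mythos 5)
Your proof is correct and follows essentially the same route as the paper's: both use \cref{lem:bino-compute} to write $H(\lambda;s)$ as a sum of simple fractions over $i_0$, and both then carry out the same partial-fraction/residue computation at the simple poles $s=1/(w_{i_0}-1)$, pinning down the leftover polynomial part via the expansion $\tilde H(\lambda;s)=1-\tfrac{n}{\alpha}s+O(s^2)$. The differences are purely organizational (the paper derives the partial-fraction decomposition of $-\alpha\tfrac{1+(1+(n-1)/\alpha)s}{s^2}\tilde H(\lambda;s)$ directly, whereas you verify residue cancellation and then use a degree bound), plus your explicit handling of the degenerate case $w_{i_0}=1$, which the paper passes over silently.
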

	\begin{proof}
		By \cref{eqn:H_r-def,eqn:H-def} and \cref{lem:bino-compute}, we have 
		\begin{align*}
			H(\lambda;s) = \sum_{\mu\coveredby\lambda} \binom{\lambda}{\mu}\frac{1}{1-\rho(\lambda/\mu)s} 
			=\sum_{i_0=1}^n \frac{w_{i_0}+(n-1)/\alpha}{1-(w_{i_0}-1)s}\cdot \prod_{\substack{i=1\\i\neq i_0}}^{n}\frac{w_i-w_{i_0}+1/\alpha}{w_i-w_{i_0}}.
		\end{align*}
		Consider the following partial fraction expansion 
		\begin{align*}
			-\alpha\frac{1+(1+(n-1)/\alpha)s}{s^2} \tilde H(\lambda;s)
			=	\sum_{i_0=1}^n \frac{A_{i_0}}{1-(w_{i_0}-1)s}+\frac{a_0+a_1s}{s^2}.
		\end{align*}
		For $i_0=1,\dots,n$, multiply by $1-(w_{i_0}-1)s$ and evaluate at $s=\frac{1}{w_{i_0}-1}$ (note that $w_i$'s are pairwise distinct), then we have
		\begin{align*}
			A_{i_0} =	\(w_{i_0}+(n-1)/\alpha\) \prod_{i\neq i_0} \frac{w_{i_0}-w_{i}-1/\alpha}{w_{i_0}-w_i}.
		\end{align*}
		Also, it is not hard to find that
		\begin{align*}
			\frac{a_0+a_1s}{s^2}=-\alpha\frac{1+(1-1/\alpha)s}{s^2}.
		\end{align*}
		Then we have 
		\begin{align*}
			H(\lambda;s) = \sum_{i_0=1}^n \frac{A_{i_0}}{1-(w_{i_0}-1)s} = \alpha\frac{1+(1-1/\alpha)s}{s^2}-\alpha\frac{1+(1+(n-1)/\alpha)s}{s^2} \tilde H(\lambda;s).
		\end{align*}
	\end{proof}
\subsubsection{The operators $\mathcal H_r$ and $\hyper[\N]{}{q}(\underline{b})$}
	Recall that the Debiard--Sekiguchi operator is 
	\begin{align*}
		D(t)(J_\lambda)=\prod_{i}(\lambda_i-(i-1)/\alpha+t)\cdot J_\lambda=\prod_{i}(w_i+t)\cdot J_\lambda.
	\end{align*}
	Then
	\begin{align*}
		\frac{D(-1/s-1+1/\alpha)}{D(-1/s-1)}(J_\lambda)
		=	\frac{\prod_{i=1}^n(w_i-1/s-1+1/\alpha)}{\prod_{i=1}^n(w_i-1/s-1)} \cdot J_\lambda
		=	\tilde H(\lambda;s) \cdot J_\lambda.
	\end{align*}
	
	Define $\tilde D_r$ by
	\begin{align}
		D(t) = \sum_{r=0}^n (t+1)^{n-r} \tilde D_r,
	\end{align}
	then $\tilde D_r$ and $D_r$ (see \cref{ex:D}) are related by
	\begin{align}
		\tilde D_r = \sum_{k=0}^r (-1)^{r-k}\binom{n-k}{r-k}D_k.
	\end{align}
	Now define the operator $\tilde{\mathcal H}(s)$ and expand it as a Taylor series (in $s$ near $s=0$) as
	\begin{align}
		\tilde{\mathcal H}(s)
		&\coloneqq \frac{D(-1/s-1+1/\alpha)}{D(-1/s-1)}
		=	\frac{\sum_{r=0}^n (-s)^{r}(1-s/\alpha)^{n-r}\tilde D_r}{\sum_{r=0}^n (-s)^{r}\tilde D_r}\notag
		\\&=	\sum_{r=0}^n (-s)^{r}(1-s/\alpha)^{n-r}\tilde D_r \cdot \sum_{k=0}^{\infty}\(-\sum_{r=1}^n (-s)^{r}\tilde D_r\)^k.	\label{eqn:tildeH}
	\end{align}
	Then
	\begin{align}
		\tilde{\mathcal H}(s)(J_\lambda) = \tilde H(\lambda;s)\cdot J_\lambda.
	\end{align}
	Define $\tilde{H}_r(\lambda)$ and $\tilde{\mathcal H}_r$ by
	\begin{align}
		\tilde{H}(\lambda;s)\eqqcolon	\sum_{r=0}^\infty \tilde{H}_r(\lambda) s^r\text{\quad and \quad}
		\tilde{\mathcal H}(s)\eqqcolon	\sum_{r=0}^\infty \tilde{\mathcal H}_r s^r,
	\end{align}
	then 
	\begin{align}
		\tilde{\mathcal H}_r(J_\lambda) = \tilde H_r(\lambda)\cdot J_\lambda.
	\end{align}
	By \cref{lem:H-GF}, for $r\geq0$, $H_r$ and $\tilde H_r$, and hence $\mathcal H_r$ and $\tilde {\mathcal H}_r$ are related by
	\begin{align}
		H_r = -\alpha \tilde H_{r+2}-(\alpha+n-1)\tilde H_{r+1},	\text{\quad and \quad}
		\mathcal H_r = -\alpha \tilde{\mathcal H}_{r+2}-(\alpha+n-1)\tilde{\mathcal H}_{r+1}.\label{eqn:H-rec}
	\end{align}
	
	\begin{example}\label{ex:H}
		The first few  operators $\tilde D_r$, $\tilde{\mathcal H}_r$ and $\mathcal H_r$ are
		\begin{align*}
			\tilde D_0	&=	D_0	=	1,	\\
			\tilde D_1	&=	D_1-nD_0,	\\ 
			\tilde D_2	&=	D_2-(n-1)D_1+\frac{n(n-1)}{2}D_0,
		\end{align*}
		\begin{align*}
			\tilde{\mathcal H}_0	&=	1,\\
			\tilde{\mathcal H}_1	&=	-\frac{n}{\alpha},\\
			\tilde{\mathcal H}_2	&=	-\frac{\tilde D_1}{\alpha} +\frac{1}{\alpha^2}\binom{n}{2},	\\
			\tilde{\mathcal H}_3	&=	\frac{2\tilde D_2-\tilde D_1^2}{\alpha}+\frac{n-1}{\alpha^2}\tilde D_1-\frac{1}{\alpha^3}\binom{n}{3},	\\
			\tilde{\mathcal H}_4	&=	-\frac{3\tilde D_3-3\tilde D_1\tilde D_2+\tilde D_1^3}{\alpha} +\frac{(n-1)\tilde D_1^2-(2n-3)\tilde D_2}{\alpha^2} -\frac{(n-1)(n-2)\tilde D_1}{2\alpha^3} +\frac{1}{\alpha^4}\binom{n}{4}.
		\end{align*}
		and
		\begin{align*}
			\mathcal H_0	&=	\tilde D_1+n+\frac{n(n-1)}{2\alpha} = E_2,	\\
			\mathcal H_1	&=	\tilde D_1^2-2\tilde D_2+\tilde D_1 -\frac{n(n-1)}{2\alpha} -\frac{n(n-1)(2n-1)}{6\alpha^2} = 2\square,	\\
			\mathcal H_2	&=	\tilde D_{1}^{3}-3\tilde D_{1}\tilde D_{2}+3\tilde D_{3}+\tilde D_{1}^{2}-2\tilde D_{2}
			-\frac{(n-1)\tilde D_{1}+\tilde D_{2}}{\alpha}
			\\&\=	+\frac{n(n-1)(n-2-3\tilde D_{1})}{6\alpha^2}
			+ \frac{n(n-1)(n-2)(3n-1)}{24\alpha^3}.
		\end{align*}

		One can derive these operators by considering the action of $e_1$, $[\square,e_1]$, $[\square,[\square,e_1]]$ on $\exp(p_1)$, as in \cref{lem:square-p1,lem:rho-bino}.
	\end{example}
	
	Now, let
	\begin{align}\label{eqn:Dq1}
		\hyper[\N]{}{q}(\underline{b})
		=\sum_{r=0}^q e_{q-r}(b)\mathcal H_r
		= \CT\(\prod_{k=1}^q (b_k+s^{-1})\cdot \mathcal H(s)\),
	\end{align}
	where $\CT$ denotes the constant term of the Laurent series in $s$ near $s=0$.
	Then $\hyper[\N]{}{q}(\underline{b})$ acts diagonally on $(J_\lambda)$ as
	\begin{align}
		\hyper[\N]{}{q}(\underline{b})(J_\lambda) = \sum_{r=0}^qe_{q-r}(\underline{b})H_r(\lambda) \cdot J_\lambda= \sum_{\mu\coveredby\lambda} \prod_{k=1}^q\(\rho(\lambda/\mu)+b_k\)\binom{\lambda}{\mu}\cdot J_\lambda.
	\end{align}
\subsubsection{\cref{thm:C}}
	Consider the differential operator
	\begin{align}\label{eqn:pDqx1}
		\hyper[\N]{}{q}(\underline{b}) - \hyper[\R]{p}{}(\underline{a}),
	\end{align}
	where $\hyper[\R]{p}{}(\underline{a})$ and $\hyper[\N]{}{q}(\underline{b})$ are given by \cref{eqn:raise-x,eqn:Dq1}, respectively.
	
	\begin{maintheorem}\label{thm:C}
		The hypergeometric series $\hyper[F]{p}{q}(\underline{a};\underline{b};x;\alpha)$ is the unique solution in $\mathscr F^{(x)}$ of the equation 
		\begin{align}\label{eqn:pDqx1(F)}
			\(\hyper[\N]{}{q}(\underline{b})-\hyper[\R]{p}{}(\underline{a})\) (F(x))=0,
		\end{align}
		subject to the initial condition that $F(\bm0_n)=1$, i.e., $C_{(0)}(\alpha)=1$.
	\end{maintheorem}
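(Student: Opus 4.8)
The plan is to follow the strategy of \cref{thm:A}, reducing \cref{eqn:pDqx1(F)} to a recursion on the coefficients and then exploiting the crucial fact that, in contrast to \cref{thm:B}, this recursion now runs in the favorable direction. First I would apply both operators to a general series $F(x)=\sum_\lambda C_\lambda(\alpha)\alpha^{|\lambda|}J_\lambda^*(x)\in\mathscr F^{(x)}$ and compare the coefficients of each $J_\lambda^*$, exactly as in the two computations preceding the theorem but carried out for an arbitrary $F$. Since $\hyper[\N]{}{q}(\underline b)$ acts diagonally,
\[
\hyper[\N]{}{q}(\underline b)(F)=\sum_\lambda C_\lambda(\alpha)\alpha^{|\lambda|}N(\lambda)\,J_\lambda^*,\qquad N(\lambda):=\sum_{\mu\coveredby\lambda}\prod_{k=1}^q\(\rho(\lambda/\mu)+b_k\)\binom{\lambda}{\mu},
\]
while the raising operator $\hyper[\R]{p}{}(\underline a)$, by \cref{eqn:raise-x} and \cref{eqn:square2-e1}, sends $J_\mu^*$ to a combination of $J_\lambda^*$ with $\lambda\cover\mu$; after reindexing and using $\alpha^{|\mu|}\cdot\alpha=\alpha^{|\lambda|}$ one gets the coefficient $\alpha^{|\lambda|}\sum_{\mu\coveredby\lambda}C_\mu(\alpha)\prod_{k=1}^p(\rho(\lambda/\mu)+a_k)\binom{\lambda}{\mu}$ of $J_\lambda^*$. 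Equating coefficients in \cref{eqn:pDqx1(F)} and cancelling $\alpha^{|\lambda|}$ then yields the recursion
\[
C_\lambda(\alpha)\,N(\lambda)=\sum_{\mu\coveredby\lambda}C_\mu(\alpha)\prod_{k=1}^p\(\rho(\lambda/\mu)+a_k\)\binom{\lambda}{\mu}.
\]
The decisive feature is that the right-hand side involves only the $C_\mu$ with $|\mu|=|\lambda|-1$, so this is a well-founded recursion expressing each coefficient in terms of coefficients of strictly smaller size, unlike the ill-directed \cref{eqn:rec-up}.

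For existence, I would verify directly, using the Pochhammer ratio \cref{eqn:poch-ratio}, that $C_\lambda(\alpha)=\poch{\underline a}{\lambda}/\poch{\underline b}{\lambda}$ solves the recursion. Indeed, for $\lambda\cover\mu$ one has $\poch{\underline a}{\lambda}/\poch{\underline a}{\mu}=\prod_k(a_k+\rho(\lambda/\mu))$ and likewise for $\underline b$, so the $\mu$-summand on the right becomes $\tfrac{\poch{\underline a}{\lambda}}{\poch{\underline b}{\mu}}\binom{\lambda}{\mu}$, while on the left $\tfrac{\poch{\underline a}{\lambda}}{\poch{\underline b}{\lambda}}\prod_k(b_k+\rho(\lambda/\mu))\binom{\lambda}{\mu}=\tfrac{\poch{\underline a}{\lambda}}{\poch{\underline b}{\mu}}\binom{\lambda}{\mu}$; both sides reduce term-by-term to $\sum_{\mu\coveredby\lambda}\poch{\underline a}{\lambda}/\poch{\underline b}{\mu}\cdot\binom{\lambda}{\mu}$. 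For uniqueness I would argue by induction on $d=|\lambda|$, starting from the initial condition $C_{(0)}(\alpha)=1$: once all coefficients of size $d-1$ are known, the recursion pins down each $C_\lambda$ of size $d$, provided the eigenvalue $N(\lambda)$ is nonzero.

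The one genuine point, and the main (though mild) obstacle, is precisely this nonvanishing of $N(\lambda)$ for $|\lambda|\ge1$. I would treat $N(\lambda)$ as a polynomial in $\underline b$ over $\mathbb Q(\alpha)$: expanding $\prod_k(\rho(\lambda/\mu)+b_k)$ in elementary symmetric functions gives $N(\lambda)=\sum_{r=0}^q e_{q-r}(\underline b)H_r(\lambda)$, whose term of top degree $q$ is $e_q(\underline b)H_0(\lambda)$. By \cref{ex:H} the operator $\mathcal H_0$ equals $E_2$, so $H_0(\lambda)=\sum_{\mu\coveredby\lambda}\binom{\lambda}{\mu}=|\lambda|$ by \cref{eqn:E2}; hence the leading coefficient $|\lambda|$ is nonzero whenever $|\lambda|\ge1$, and therefore $N(\lambda)\ne0$. (Alternatively, the positivity of the adjacent binomial coefficients in \cref{fact:bino} shows $N(\lambda)>0$ for generic positive parameters, which again forces $N(\lambda)\ne0$ as a rational function.) This closes the induction and establishes uniqueness, completing the proof; I expect the whole argument to be shorter and cleaner than that of \cref{thm:B}, precisely because the raising operator produces a recursion in the correct direction and no stability hypothesis is needed.
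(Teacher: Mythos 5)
Your proposal is correct and takes essentially the same approach as the paper: apply $\R$ and $\N$ to a general element of $\mathscr F^{(x)}$, compare coefficients of $J_\lambda^*$ to obtain exactly the recursion \cref{eqn:C-rec-1}, check via \cref{eqn:poch-ratio} that $\poch{\underline{a}}{\lambda}/\poch{\underline{b}}{\lambda}$ solves it, and conclude uniqueness because the recursion runs downward in $|\lambda|$. Your added verification that the eigenvalue $N(\lambda)$ is nonzero (its top-degree term in the indeterminates $\underline{b}$ being $|\lambda|\,b_1\cdots b_q$) makes explicit a point the paper passes over when it asserts that the recursion ``determines $(C_\lambda(\alpha))$ uniquely.''
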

	\begin{proof}
		Let \cref{eqn:F(x)} be a solution of \cref{eqn:pDqx1(F)}.
		Then
		\begin{align*}
			\hyper[\R]{p}{}(\underline{a})(F(x))
			&=	\sum_\mu C_\mu(\alpha)\alpha^{|\mu|} \alpha\sum_{\lambda\cover\mu} \prod_{k=1}^p(\rho(\lambda/\mu)+a_k)\binom{\lambda}{\mu} J_\lambda^*(x) 
			\\&=	\sum_\lambda \alpha^{|\lambda|}\sum_{\mu\coveredby\lambda} C_\mu(\alpha) \prod_{k=1}^p(\rho(\lambda/\mu)+a_k)\binom{\lambda}{\mu} J_\lambda^*(x),	\\
			\hyper[\N]{}{q}(\underline{b})(F(x))
			&=	\sum_\lambda C_\lambda(\alpha)\alpha^{|\lambda|} \sum_{\mu\coveredby\lambda} \prod_{k=1}^q\(\rho(\lambda/\mu)+b_k\)\binom{\lambda}{\mu} J_\mu^*(x).
		\end{align*}
		Hence, comparing the coefficients of $J_\lambda^*(x)$ gives a recursion
		\begin{align}\label{eqn:C-rec-1}
			C_\lambda(\alpha)\sum_{\mu\coveredby\lambda} \prod_{k=1}^q\(\rho(\lambda/\mu)+b_k\)\binom{\lambda}{\mu}
			=\sum_{\mu\coveredby\lambda} C_\mu(\alpha) \prod_{k=1}^p(\rho(\lambda/\mu)+a_k)\binom{\lambda}{\mu},
		\end{align}
		which determines $(C_\lambda(\alpha))$ uniquely.
		
		Again, it is obvious by definitions and \cref{eqn:poch-ratio} that $C_\lambda(\alpha)=\frac{\poch{\underline{a}}{\lambda}}{\poch{\underline{b}}{\lambda}}$ satisfies the recursion \cref{eqn:C-rec-1} and the initial condition, hence $\hyper[F]{p}{q}(\underline{a};\underline{b};x;\alpha)$ is indeed a solution. 
	\end{proof}
	
	Continuing the discussion from the end of the previous subsection, let us compare with the proof of \cref{thm:B}.  
	Replacing the lowering operator $\L$ by the raising operator $\R$ and the eigen-operator $\M$ by $\N$, the relation \cref{eqn:rec-up} transforms into the recursion \cref{eqn:C-rec-1}, and uniqueness follows immediately.
	\begin{example}
		For $\hyper{2}{1}(a,b;c;x;\alpha)$, we have 
		\begin{align*}
			\hyper[\N]{}{1}(c) &= c\mathcal H_0+\mathcal H_1 = cE_2+2\square	\\
			&=	\sum_i \(cx_i\partial_i+x_i^2\partial_i^2\)+\frac{2}{\alpha}\sum_{i\neq j}\frac{x_ix_j}{x_i-x_j}\partial_i,	\\
			\intertext{and}
			\hyper[\R]{2}{}(a,b) &= abe_1+(a+b)[\square,e_1]+[\square,[\square,e_1]]
			\\&=	\sum_{i} \(x_i^3\partial_i^2+(a+b+1)x_i^2\partial_i+abx_i\) +\frac{2}{\alpha}\sum_{i\neq j} \frac{x_i^2x_j}{x_i-x_j}\partial_i,
		\end{align*}
		hence 
		\begin{align*}
			&\=	\hyper[\N]{}{1}(c)-\hyper[\R]{2}{}(a,b)	\notag
			\\&=\sum_i x_i\((1-x_i)x_i\partial_i^2+(c-(a+b+1)x_i)\partial_i-ab\)+\frac{2}{\alpha}\sum_{i\neq j}\frac{x_ix_j(1-x_i)}{x_i-x_j}\partial_i.
		\end{align*}
		In particular, when $n=1$, we have (changing the variable to $z$)
		\begin{align*}
			z\(z(1-z)\dod[2]{}{z}+(c-(a+b+1)z)\dod{}{z}-ab\),
		\end{align*}
		which is $z$ times Euler's differential operator in \cref{eqn:euler}.
	\end{example}
	
\section{Future directions}\label{sec:future}
\subsection{The ideals $\hyper[\mathcal D]{p}{q}$}\label{sec:ideal}
	Let $\mathcal D^{(x)}$ be the Weyl algebra over $\Q(\alpha)$ in the variables $x=(x_1,\dots,x_n)$ localized by $(x_i-x_j)_{1\leq i<j\leq n}$, that is, $\mathcal D^{(x)}$ consists of 
	\begin{align}
		\sum_{}C_{a_1,\dots,a_n}(x)\partial_1^{a_1}\cdots \partial_n^{a_n},
	\end{align}
	where the sum is over $a_1,\dots,a_n\geq0$ and $C_{a_1,\dots,a_n}(x)$ is in the ring $\Q(\alpha)[x_1,\dots,x_n]$ localized at the ideal $\(x_i-x_j\)_{1\leq i<j\leq n}$.
	
	Similarly, let $\mathcal D^{(x,y)}$ be the Weyl algebra in the variables $x=(x_1,\dots,x_n)$ and $y=(y_1,\dots,y_n)$, localized by $(x_i-x_j)_{1\leq i<j\leq n}$ and $(y_i-y_j)_{1\leq i<j\leq n}$.
	
	Let $\hyper[\mathcal D]{p}{q}$ consist of operators that annihilate $\hyper{p}{q}$, more precisely, 
	\begin{align}
		\hyper[\mathcal D]{p}{q}^{(x,y)} &= \hyper[\mathcal D]{p}{q}^{(x,y)}(\underline{a};\underline{b})  \coloneqq \Set{\delta\in\mathcal D^{(x,y)}}{\delta(\hyper{p}{q}(\underline{a};\underline{b};x,y;\alpha))=0},	\\
		\hyper[\mathcal D]{p}{q}^{(x)} &= \hyper[\mathcal D]{p}{q}^{(x)}(\underline{a};\underline{b}) \coloneqq  \Set{\delta\in\mathcal D^{(x)}}{\delta(\hyper{p}{q}(\underline{a};\underline{b};x;\alpha))=0}.
	\end{align}
	Obviously, $\hyper[\mathcal D]{p}{q}^{(x,y)}$ is a left ideal in $\mathcal D^{(x,y)}$ and $\hyper[\mathcal D]{p}{q}^{(x)}$ in $\mathcal D^{(x)}$. 
	In addition, $\hyper[\mathcal D]{p}{q}^{(x,y)}$ and $\hyper[\mathcal D]{p}{q}^{(x)}$ are closed under the commutator bracket $[A,B]=AB-BA$.
	
	It would be an interesting question to study these ideals $\hyper[\mathcal D]{p}{q}^{(x,y)}$ and $\hyper[\mathcal D]{p}{q}^{(x)}$.
	
	We note here that 
	\begin{enumerate}
		\item In the two-alphabet case, the differential operators $\hyper[\L]{}{q}^{(x)}-\hyper[\R]{p}{}^{(y)}$ and $\hyper[\L]{}{q}^{(y)}-\hyper[\R]{p}{}^{(x)}$, and the differences of Debiard--Sekiguchi operators $D_{r}^{(x)}-D_{r}^{(y)}$ are in $\hyper[\mathcal D]{p}{q}^{(x,y)}$.
		\item In the one-alphabet case, the differential operators $\hyper[\L]{}{q}-\hyper[\M]{p}{}$ and $\hyper[\N]{}{q}-\hyper[\R]{p}{}$ are in $\hyper[\mathcal D]{p}{q}^{(x)}$.
	\end{enumerate}
	
\subsection{$q$-analogs}\label{sec:basic}
\subsubsection{Basic hypergeometric series}
	In 1846, Heine introduced a $q$-analog of the Gauss hypergeometric function, called the \mydef{basic hypergeometric series}:
	\begin{align}
		\hyper[\phi]{2}{1}(a,b;c;z;q) = \sum_{k=0}^\infty \frac{\poch{a;q}{k}\poch{b;q}{k}}{\poch{c;q}{k}} \frac{z^k}{\poch{q;q}{k}},
	\end{align}
	where $\poch{a;q}{k}=(1-a)(1-aq)\cdots(1-aq^{k-1})$ is the $q$-\mydef{Pochhammer symbol}.
	Since $\frac{1-q^a}{1-q}\to a$ as $q\to1$, we have
	\begin{align}
		\lim_{q\to1}\hyper[\phi]{2}{1}(q^a,q^b;q^c;z;q) = \hyper{2}{1}(a,b;c;z). 
	\end{align}
	The basic hypergeometric series also admits more parameters: 
	\begin{align}
		\hyper[\phi]{r}{s}(a_1,\dots,a_r;b_1,\dots,b_s;z;q) = \sum_{k=0}^\infty \((-1)^kq^{\binom{k}{2}}\)^{1+s-r} \frac{\poch{a_1;q}{k}\cdots\poch{a_r;q}{k}}{\poch{b_1;q}{k}\cdots\poch{b_s;q}{k}} \frac{z^k}{\poch{q;q}{k}}.
	\end{align}
	The series $\hyper[\phi]{r}{s}$ satisfies the following $q$-difference equation:
	\begin{align}
		\Delta_1\Delta_{b_1/q}\cdots \Delta_{b_s/q}(F(z)) =z\Delta_{a_1}\cdots\Delta_{a_r}(F(q^{1+s-r}z)),
	\end{align}
	where $\Delta_af(z)=af(qz)-f(z)$. 
	This equation is the $q$-analog of \cref{eqn:pfq-de}.
	See \cite[Exercise~1.31]{GR04}. 
\subsubsection{Macdonald hypergeometric series}
	In another manuscript in 1980s, \cite{Mac-HG-2}, Macdonald introduced hypergeometric series associated with Macdonald polynomials $J_\lambda(q,t)$:
	\begin{align}
		\hyper[\Phi]{r}{s}(\underline{a};\underline{b};x;q,t) &= \sum_{\lambda} \frac{\poch{\underline{a};q,t}{\lambda}}{\poch{\underline{b};q,t}{\lambda}} t^{n(\lambda)} \frac{J_\lambda(x;q,t)}{j_\lambda(q,t)},	\\
		\hyper[\Phi]{r}{s}(\underline{a};\underline{b};x,y;q,t) &= \sum_{\lambda} \frac{\poch{\underline{a};q,t}{\lambda}}{\poch{\underline{b};q,t}{\lambda}} t^{n(\lambda)} \frac{J_\lambda(x;q,t)J_\lambda(y;q,t)}{j_\lambda(q,t)  J_\lambda(1,t,\dots,t^{n-1};q,t)},
	\end{align}
	where $\underline{a}=(a_1,\dots,a_r)$, $\underline{b}=(b_1,\dots,b_s)$. 
	See also \cite{Kan96} for a slightly different definition (the two definitions coincide when $r=s+1$).
	Macdonald hypergeometric series generalize both the basic and the Jack hypergeometric series.
	
	One natural question is to study the $q$-difference equation characterizing such hypergeometric series, generalizing the results in this paper.
	The case of $\hyper[\Phi]{2}{1}(a,b;c;x;q,t)$ was studied by Kaneko \cite{Kan96} in 1996. 
	The general case $\hyper[\Phi]{r}{s}$ has remained open since then and will be addressed in a forthcoming work \cite{Chen-Mac}.
	
\subsection{Non-symmetric analogs}
	The non-symmetric analogs of Jack polynomials, differential-reflection operators (Cherednik operators), binomial formulas, and hypergeometric series have been studied in \cite{Cherednik91,Sahi96-nonsym,BF-nonsym,BR23} and the references therein.
	Our methods extend naturally to this non-symmetric setting and will be developed in another forthcoming paper \cite{CS-nsJack}.
\appendix
\section{Borodin--Olshanski's \texorpdfstring{${}_2\widehat{F}_1$}{2F̂1}}\label{appendix}
	In \cite{BO05}, Borodin and Olshanski introduced the following hypergeometric series with a different denominator:
	\begin{align}
	   	\hyper[\widehat{F}]{2}{1}(a,b;c;x;\alpha)
	   	= \sum_{\lambda\in\mathcal P_n} \frac{\poch{a;\alpha}{\lambda}\poch{b;\alpha}{\lambda}}{\poch{c}{|\lambda|}} \alpha^{|\lambda|} J_\lambda^*(x;\alpha).
	\end{align}
	We present two differential equations characterizing $\hyper[\widehat{F}]{2}{1}$.

\subsection{The lowering operator}
	Define
	\begin{align}
		\widehat\L = \widehat\L^{(n)}(c) = (c+E_2)E_1 = \(c+\sum_{i=1}^n x_i\partial_i\)\sum_{i=1}^n \partial_i,
	\end{align}
	then
	\begin{align}
		\widehat\L( J_\lambda^*) = (c+|\lambda|-1) \sum_{\mu\coveredby\lambda} \binom{\lambda}{\mu} \frac{J_\lambda^*(\bm1_n)}{J_\mu^*(\bm1_n)} J_\mu^*.
	\end{align}
	Hence, 
	\begin{align*}
		\widehat\L(\hyper[\widehat F]{2}{1})
		&=	\sum_{\lambda\in\mathcal P_n} \frac{\poch{a;\alpha}{\lambda}\poch{b;\alpha}{\lambda}}{\poch{c}{|\lambda|}} \alpha^{|\lambda|} (c+|\lambda|-1) \sum_{\mu\coveredby\lambda} \binom{\lambda}{\mu} \frac{J_\lambda^*(\bm1_n)}{J_\mu^*(\bm1_n)} J_\mu^*
		\\&=	\sum_{\mu\in\mathcal P_n} \frac{\poch{a;\alpha}{\mu}\poch{b;\alpha}{\mu}}{\poch{c}{|\mu|}} \alpha^{|\mu|} \(\sum_{\lambda\cover\lambda} \alpha(a+\rho(\lambda/\mu))(b+\rho(\lambda/\mu))\binom{\lambda}{\mu} \frac{J_\lambda^*(\bm1_n)}{J_\mu^*(\bm1_n)}\) J_\mu^*.
	\end{align*}
	By \cref{lem:square-p1,lem:rho-bino}, let
	\begin{align}
		\M&=\M^{(n)}(a,b)=2\square+\(a+b+1+\frac{n-1}{\alpha}\)E_2+abn
		\\&= \sum_{i=1}^n \(x_i^2\partial_i^2+(a+b+1)x_i\partial_i+ab\)
		+\frac{1}{\alpha} \sum_{1\leq i\neq j\leq n} \frac{x_i+x_j}{x_i-x_j}x_i\partial_i.
	\end{align}
	then
	\begin{align}
		\M(J_\mu^*) = \sum_{\lambda\cover\lambda} \alpha(a+\rho(\lambda/\mu))(b+\rho(\lambda/\mu))\binom{\lambda}{\mu} \frac{J_\lambda^*(\bm1)}{J_\mu^*(\bm1)}\cdot J_\mu^*.
	\end{align}
	Then by the same reasoning as in \cref{thm:B}, we have:
	\begin{namedtheorem*}{Theorem $\widehat{B}$}
		The series $\hyper[\widehat F]{2}{1}(a,b;c;x;\alpha)$ is the unique solution in $\mathscr F^{(x)}$of the equation
		\begin{align}
			(\widehat\L^{(n)}(c)-\M^{(n)}(a,b))(F(x))=0,\quad F(\bm0_n)=0,
		\end{align}
		subject to the following stability condition:
		for each $m\leq n-1$, 
		\begin{align}
			\(\widehat\L^{(m)}(c)-\M^{(m)}(a,b)\)\(\hyper[\widehat F]{2}{1}(a,b;c;x_1,\dots,x_m,0,\dots,0;\alpha)\)=0.
		\end{align}
	\end{namedtheorem*}
\subsection{The raising operator}
	Let 
	\begin{align}
	\R &= \R^{(n)}(a,b) = (\ad_{\square}+a)(\ad_{\square}+b)(e_1)
	\\&=\sum_{i=1}^n \(x_i^3\partial_i^2+(a+b+1)x_i^2\partial_i+abx_i\) +\frac{2}{\alpha}\sum_{1\leq i\neq j\leq n}\frac{x_ix_j}{x_i-x_j}x_i\partial_i
	\end{align}
	then
	\begin{align}
	\R(J_\mu^*) = \sum_{\lambda\cover\mu} \alpha (a+\rho(\lambda/\mu))(b+\rho(\lambda/\mu)) \binom{\lambda}{\mu}J_\lambda^*
	\end{align}
	hence
	\begin{align*}
	\R(\hyper[\widehat F]{2}{1})
		&= \sum_{\mu\in\mathcal P_n} \frac{\poch{a;\alpha}{\mu}\poch{b;\alpha}{\mu}}{\poch{c}{|\mu|}} \alpha^{|\mu|} \sum_{\lambda\cover\mu} \alpha (a+\rho(\lambda/\mu))(b+\rho(\lambda/\mu))\binom{\lambda}{\mu}J_\lambda^*
		\\&=	\sum_{\lambda\in\mathcal P_n} \frac{\poch{a;\alpha}{\lambda}\poch{b;\alpha}{\lambda}}{\poch{c}{|\lambda|}} \alpha^{|\lambda|} \( \sum_{\mu\coveredby\lambda}(c+|\mu|)\binom{\lambda}{\mu}\) J_\lambda^*.
	\end{align*}
	Now, by \cref{ex:H}, set
	\begin{align}
		\widehat\N = \widehat\N^{(n)}(c)= (c-1+E_2)E_2 = \left(c-1+\sum_{i=1}^n x_i\partial_i\right)\sum_{i=1}^n x_i\partial_i,
	\end{align}
	then
	\begin{align}
		\widehat\N(J_\lambda^*) 
		=   (c+|\lambda|-1) \sum_{\mu\coveredby\lambda}\binom{\lambda}{\mu} \cdot J_\lambda^*
		=   \sum_{\mu\coveredby\lambda}(c+|\mu|)\binom{\lambda}{\mu} \cdot J_\lambda^*.
	\end{align}
	(In fact, we have $\sum_{\mu\coveredby\lambda}\binom{\lambda}{\mu} = |\lambda|$.)
	Again, by the same reasoning as in \cref{thm:C}, we have
	\begin{namedtheorem*}{Theorem $\widehat{C}$}
		The series $\hyper[\widehat F]{2}{1}(a,b;c,x;\alpha)$ is the unique solution in $\mathscr F^{(x)}$ of the equation 
		\begin{align}
			(\widehat\N(c)-\R(a,b))(F(x))=0,\quad F(\bm0_n)=0.
		\end{align}
	\end{namedtheorem*}

\section*{Acknowledgments}
The authors would like to thank Donald Richards for suggesting the question of finding differential operators for the Jack hypergeometric series $\hyper{p}{q}$.
H.C.\ was partially supported by the Lebowitz Summer Research Fellowship and the SAS Fellowship at Rutgers University.
S.S.\ was partially supported by Simons Foundation grant 00006698.

\printbibliography
\end{document}